\def\n{\nabla}
\def\bee{\begin{equation*}}
\def\eee{\end{equation*}}
\def\e{\epsilon}
\def\p{\partial}
\newcommand\R{{\mathbb R}}
\newcommand\C{{\mathbb C}}
\def\K{K\"ahler }
\def\KRF{K\"ahler-Ricci flow }
\def\A{Amp\`{e}re }
\def\ddbar{\partial\bar\partial}
\def\be{\begin{equation}}
\def\ee{\end{equation}}
\def\re{\text{Re}}
\def\e{\epsilon}
\def\Ric{\text{\rm Ric}}
\def\p{\partial}
\def\re{\text{\rm Re}}
\def\C{\Bbb C}
\def\p{\partial}
\def\re{\text{\rm Re}}
\def\p{\partial}
\def\C{\Bbb C}
\def\ddbar{\partial\bar\partial}
\def\ddb{\partial\bar\partial}
\def\KRF{K\"ahler-Ricci flow }
\def\n{\nabla}
\def\l@subsection{\@tocline{2}{0pt}{2.5pc}{5pc}{}}
\def\l@subsubsection{\@tocline{3}{0pt}{3pc}{5pc}{}}
\newtheorem{thm}{Theorem}[section]
\newtheorem{ass}{Assumption}
\newtheorem{lem}{Lemma}[section]
\newtheorem{prop}{Proposition}[section]
\theoremstyle{definition}
\newtheorem{defn}{Definition}[section]
\theoremstyle{remark}
\newtheorem{rem}{Remark}
\newtheorem{ack}{Acknowledgements}
\numberwithin{equation}{section}
\begin{document}
\title{K\"ahler-Ricci flow of cusp singularities on quasi projective varieties}
\author{Albert Chau$^1$, Ka-Fai Li, Liangming Shen}
\address{Department of Mathematics,
The University of British Columbia, Room 121, 1984 Mathematics
Road, Vancouver, B.C., Canada V6T 1Z2} 
\email{chau@math.ubc.ca}  \email{kfli@math.ubc.ca}   \email{lmshen@math.ubc.ca}

\thanks{$^1$Research
partially supported by NSERC grant no. \#327637-06}

\thanks{\begin{it}2000 Mathematics Subject Classification\end{it}.  Primary 53C55, 58J35.}
%\thanks{\begin{it}Key words and phrases\end{it}.  Non-compact K\"ahler-Einstein metrics, %K\"ahler-Ricci flow, parabolic  Monge-Amp\`{e}re  equation.}

\maketitle\markboth{Albert Chau, Ka-Fai Li, Liangming Shen} {Cusp flow}

\tableofcontents

\addcontentsline{toc}{section}{Abstract}

\begin{abstract}
Let $\overline{M}$ be a compact complex manifold with smooth \K metric $\eta$,  and let $D$ be a smooth divisor on $\overline{M}$.   Let $M=\overline{M}\setminus D$ and let $\widehat{\omega}$ be a Carlson-Griffiths type metric on $M$.   We study complete solutions to \K Ricci flow \eqref{ckrf} on $M$ which are comparable to $\widehat{\omega}$, starting from a smooth initial metric $\omega_0=\eta +i\ddbar \phi_0$ where $\phi_0\in C^{\infty}(M)$.  When $\omega_0\geq  c \widehat{\omega}$ on $M$ for some $c>0$ and $\phi_0$ has zero Lelong number, we construct a smooth solution $\omega(t)$ to \eqref{ckrf} on $M\times [0, T_{[\omega_0 ]})$ where $T_{[\omega_0 ]}:= \sup \{ T: [\eta] +T (c_1(K_{\overline{M}}) + c_1(\mathcal{O}_D))\in  \mathcal{K}_M \}$ so that $\omega(t)\geq (\frac{1}{n} -  \frac{4\hat{K}t}{c} )\widehat{\omega}$
for all $t\leq \frac{c}{4n\hat{K}}$  where $\hat{K}$ is a non-negative upper bound on the bisectional curvatures of $\widehat{\omega}$ (see Theorem \ref{T1}).  In particular, we do not assume $\omega_0$ has bounded curvature.  If $\omega_0$ has bounded curvature and is asymptotic to $\widehat{\omega}$ in an appropriate sense, we construct a complete bounded curvature solution on $M\times [0, T_{[\omega_0 ]})$ (see Theorem \ref{T3}).   These generalize some of the results of Lott-Zhang in \cite{LZ}.  On the other hand if we only assume $\omega_0\geq  c \eta$ on $M$ for some $c>0$ and $\phi_0$ is bounded on $M$, we construct a smooth solution to \eqref{ckrf} on $M\times [0, T_{[\omega_0 ]})$ which is equivalent to $\widehat{\omega}$ for all positive times.  This includes as a special case when $\omega_0$ is smooth on $\overline{M}$ in which case the solution becomes instantaneously complete on $M$ under \eqref{ckrf} (see Theorem \ref{T-bdd data}).

\end{abstract}

\section{Introduction}

 Let $\overline{M}$ be a compact complex manifold with smooth \K metric $\eta$.  Let $D$ be a smooth divisor on $\overline{M}$  and let $M=\overline{M}\setminus D$.  In this paper we consider the \K Ricci flow

\be\label{ckrf}
 \left\{
   \begin{array}{ll}
     \displaystyle\frac{\partial \omega(t)}{\partial t}=-Ric(\omega(t)) \\
     \omega(0)  = \omega_0.
   \end{array}
 \right.
 \ee
on $M$ for initial data having the form $\omega_0=\eta+i\partial\bar{\partial} \varphi_0$ where $\varphi_0 \in Psh(\overline{M}, \eta)$.  We refer to Definition \ref{1stdefn} for the notations used here and throughout the rest of the paper.   In particular, we also fix some holomorphic section $S$ of $\mathcal{O}_D$ vanishing precisely along $D$.

 We will be interested in constructing solutions to \eqref{ckrf} which are complete on $M$.  A canonical class to work with are the cusplike merics on $M$, which are metrics equivalent to the standard complete local model $$\displaystyle \frac{i dz^1 \wedge dz^{\bar{1}}}{|z^1|^2 \log^2 |z^1|^2} + i \sum_{j=2}^{n} dz^j \wedge dz^{\bar{j}}.$$
 in any holomorphic coordinate neighborhood $N$ of $\overline{M}$ where $D\bigcap N=\{z_1=0\}$.   In \cite{LZ} the authors showed that if $\omega_0$ is cusplike with so called {\it super standard spatial asymptotics} at $D$, then a bounded curvature cusplike solution $\omega(t)$ to \eqref{ckrf} exists on $M\times[0, T_{[\omega_0 ]})$, having the same asymptotics for all $t$, where

\begin{equation}\label{existenceestimate1}T_{[\omega_0 ]}:= sup \{ T: [\eta] +T (c_1(K_{\overline{M}}) + c_1(\mathcal{O}_D))\in  \mathcal{K}_{\overline{M}} \}\end{equation}
They also showed under a weaker condition called {\it standard spatial asymptotics}, a similar bounded curvature solution exists on $M\times[0, T)$, for some maximal $T$ where $T\leq T_{[\omega_0 ]}$.  A main point here is that the maximal existence time $T_{[\omega_0 ]}$ depends only on the cohomology class of the initial form $\omega_0$.  When $\omega_0$ has bounded curvature and is cusplike and satisfies a condition weaker than {\it super standard spatial asymptotics} at $D$, then we show there still exists a solution to \eqref{ckrf} on $M\times [0, T_{[\omega_0 ]})$ which is cusplike for all positive times (see Theorem \ref{T3} and below for details).  More generally, assuming $\omega_0$ is only bounded below on $M$ by a cusplike metric, and has zero Lelong number, we will construct a solution to \eqref{ckrf} on $M\times [0, T_{[\omega_0 ]})$ which is likewise bounded below by a cusplike metric on some definite positive time subinterval of $[0, T_{[\omega_0 ]})$ (see Theorem \ref{T1} and below for details).  In particular, $\omega_0$ may have unbounded curvature on $M$ here.  On the other hand, in cases when $\omega_0$ may be incomplete on $M$, including when $\omega_0$ is smooth on $\overline{M}$, we can still construct solutions on $M\times [0, T_{[\omega_0 ]})$ which are cusplike on $M$ for all positive times (see Theorem \ref{T-bdd data}).  In these cases $\omega_0$ becomes instantaneously complete on $M$ under \eqref{ckrf}.  We now describe our main results in more details below.

 We will study \eqref{ckrf} through an associated parabolic Monge \A equation set up as follows.  For any Hermitian metric $h$ on $\mathcal{O}_D$, and volume form $\Omega$ on $\overline{M}$, consider a solution $\varphi(t)$ to the parabolic Monge \A equation

\begin{equation}\label{pma0}
\begin{split}
\left\{
   \begin{array}{ll}
\partial_{t}\varphi(t)  = \displaystyle \log\frac{\|S\|_{h}^{2}\log^2\|S\|_{h}^{2}(\theta_{t}+i\partial\bar{\partial}\varphi(t))^{n}}{\Omega};\\
\varphi(0) =  \varphi_0.
  \end{array}
 \right.\\
\end{split}
\end{equation}
$$\,\,\,\,\,\,\,\,\, \theta_{t}:=\eta+t\chi; \hspace{12pt} \chi:=-\Ric(\Omega)+\Theta_{h}-i\partial\bar{\partial}\log\log^{2}\|S\|_{h}^{2}$$
and the associated family of \K metrics
\begin{equation}\label{krfansatz}\omega(t):= \theta_{t} +i\ddbar \varphi(t)\end{equation}
on $M\times[0, T)$ for some $T$.  It follows $\omega(t)$ solves \eqref{ckrf} on  $M\times[0, T)$, and conversely, if $\omega(t)$ solves \eqref{ckrf} on $M\times[0, T)$ then \eqref{krfansatz} holds for some solution $\varphi(t)$ to  \eqref{pma0} (see the derivation of \eqref{pma-smooth-appr}).  The equation \eqref{pma0} is different from the parabolic Monge \A equations considered in the earlier works mentioned above in the appearance of the $\|S\|^2\log \|S\|_h^2$ term in the numerator of the right hand side.  This term will be useful in establishing the cusplike-ness of our solutions for positive times.

 We first consider the case $\omega_0\geq c\eta$ on $M$ for some $c>0$ where $\varphi_0$ is bounded and smooth on $M$.  In particular, $\omega_0$ is typically incomplete on $M$ here.  Our main result here is

\begin{thm}\label{T-bdd data}
 Let  $\varphi_{0}\in L^{\infty}(\overline{M})\bigcap C^{\infty}(M) \bigcap Psh(\overline{M}, \eta)$ such that
\begin{equation}\label{lower}
\omega_0=\eta+ i \partial \bar{\partial} \varphi _0\geq c\eta\end{equation} for some constant $c>0$. Let  $T_{[\omega_0 ]}$ be as in \eqref{existenceestimate1}.    Then \eqref{ckrf} has a unique smooth solution $\omega(t)$ on $M\times[0, T_{[\omega_0 ]})$
 where \begin{equation}\label{completeness}c_1(t)\widehat{\omega}\leq  \omega_{t}\leq c_2(t)\widehat{\omega}\end{equation} for all $t\in (0, T_{[\omega_0 ]})$ ,and some positive functions $c_i(t)$ and Carlson-Griffiths form (see \S 2) on  $\widehat{\omega}$ on $M$.

 Also, for any hermitian metric $h$ on $\mathcal{O}_D$ and volume form $\Omega$ on $\overline{M}$, \eqref{pma0} and \eqref{krfansatz} hold on $M\times[0, T_{[\omega_0 ]})$ for some $\varphi(t)$ which is bounded on $M$ for each $t\in [0, T_{[\omega_0 ]})$.
\end{thm}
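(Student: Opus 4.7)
The strategy is to reduce Theorem \ref{T-bdd data} to Theorem \ref{T1} by approximating the possibly incomplete initial metric $\omega_0$ from below by complete cusplike metrics with zero Lelong potentials, solving \eqref{ckrf} for each approximant, and extracting a limit from $\e$-independent a priori estimates. The key algebraic observation is that if we set $\tilde\theta_t := \eta + t(-\Ric(\Omega)+\Theta_{h})$ and $\tilde\varphi(t) := \varphi(t) - t\log\log^2\|S\|_{h}^{2}$, then $\tilde\theta_t$ is a smooth \K form on $\ol M$ for all $t\in [0, T_{[\omega_0]})$, $\omega(t) = \tilde\theta_t + i\ddbar\tilde\varphi(t)$, and \eqref{pma0} transforms into the degenerate parabolic Monge-Amp\`ere equation
\bee
\p_t\tilde\varphi = \log\frac{\|S\|_{h}^{2}\,(\tilde\theta_t + i\ddbar\tilde\varphi)^n}{\Omega}, \qquad \tilde\varphi(0) = \varphi_0\in L^\infty(\ol M),
\eee
on $\ol M$ with bounded initial datum and with vanishing weight $\|S\|_{h}^{2}$ on the right-hand side.

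For $\e > 0$ set $\varphi_{0,\e} := \varphi_0 - \e\log\log^2\|S\|_{h}^{2}$. Then $\varphi_{0,\e}$ is smooth on $M$ with zero Lelong number on $\ol M$, monotone increasing as $\e\downarrow 0$, and since $-i\ddbar\log\log^2\|S\|_{h}^{2}$ is cusplike positive near $D$ we have $\omega_{0,\e} := \eta + i\ddbar\varphi_{0,\e} \geq c_\e\,\widehat\omega$ on $M$ for some $c_\e > 0$. Because $\e\log\log^2\|S\|_{h}^{2}$ is $\ddbar$-exact, $[\omega_{0,\e}] = [\omega_0]$ and hence $T_{[\omega_{0,\e}]} = T_{[\omega_0]}$. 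Theorem \ref{T1} therefore yields smooth solutions $\omega_\e(t) = \theta_t + i\ddbar\varphi_\e(t)$ of \eqref{ckrf} on $M\times[0, T_{[\omega_0]})$, with corresponding solutions $\varphi_\e(t)$ of \eqref{pma0}.

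The crux of the proof is deriving $\e$-independent bounds on $\tilde\varphi_\e := \varphi_\e - t\log\log^2\|S\|_{h}^{2}$ and on $\omega_\e(t)$ on every $[\tau, T']\subset (0, T_{[\omega_0]})$. A uniform $L^\infty(\ol M)$ bound on $\tilde\varphi_\e$ follows from a parabolic maximum principle comparison with $\pm\|\varphi_0\|_\infty + Ct$ at the top, and a Ko\l odziej-type pluripotential estimate at the bottom, using only $\|\varphi_0\|_\infty$, the total volume of $\Omega$, and $T'$. A parabolic second-order estimate applied to $\log \tr_{\widehat\omega}\omega_\e - A\tilde\varphi_\e$ with $A$ large, exploiting the bisectional curvature bound $\hat K$ of $\widehat\omega$ together with the $\|S\|_{h}^{2}$ weight in \eqref{pma0}, produces the upper bound $\omega_\e(t) \leq c_2(t)\widehat\omega$. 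The matching lower bound $c_1(t)\widehat\omega \leq \omega_\e(t)$ is extracted from $\omega_\e(t)^n = e^{\p_t\varphi_\e}\,\Omega/(\|S\|_{h}^{2}\log^{2}\|S\|_{h}^{2})$ by combining this second-order upper bound with a lower bound on $\p_t\varphi_\e$ coming from the standard maximum principle on $\p_t\varphi_\e - \varphi_\e$.

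Once these estimates are in hand, monotonicity of $\varphi_{0,\e}$ together with the parabolic comparison principle for \eqref{pma0} gives monotonicity of $\varphi_\e(t)$ in $\e$; combined with standard higher-order parabolic bootstrap on every $[\tau,T']$ where $\omega_\e$ is uniformly equivalent to $\widehat\omega$, this produces a smooth limit $\omega(t) = \theta_t + i\ddbar\varphi(t)$ on $M\times(0, T_{[\omega_0]})$ solving \eqref{ckrf} and satisfying \eqref{completeness}, with $\varphi(t)$ bounded on $\ol M$ for each $t$. The initial condition $\omega(0) = \omega_0$ is recovered via monotone convergence of potentials, and uniqueness follows from the parabolic comparison principle for bounded MA solutions in the $\tilde\varphi$ variables. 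The principal obstacle is the second-order upper bound: since $c_\e\to 0$ as $\e\to 0$, Theorem \ref{T1}'s own lower bound $\omega_\e(t)\geq (\tfrac{1}{n}-\tfrac{4\hat K t}{c_\e})\widehat\omega$ degenerates at any fixed $t > 0$, so one cannot bootstrap directly. One must instead exploit the specific $\|S\|_{h}^{2}\log^{2}\|S\|_{h}^{2}$ weight in \eqref{pma0} to show that the flow \emph{instantaneously} regularizes a non-cusplike initial metric into a cusplike one, with estimates depending only on $t$, $\|\varphi_0\|_\infty$, the constant $c$ from \eqref{lower}, and background geometry.
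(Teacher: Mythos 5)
Your high-level plan (perturb $\varphi_0$ by $-\e\log\log^2\|S\|^2$ to get complete cusplike approximants, solve the flow for each, derive $\e$-independent estimates, and pass to the limit) is structurally very close to what the paper does, and you correctly identify that the real work is the $\e$-uniform estimates since the lower bound coming from Theorem \ref{T1} degenerates as $c_\e\to 0$. However, the two tools you invoke to close that gap do not work as stated, and they are exactly the places where the paper has to do something different.

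First, the change of variables $\tilde\varphi(t) := \varphi(t) - t\log\log^2\|S\|^2_h$ does not produce an $L^\infty(\ol M)$ quantity for $t>0$: the theorem's own conclusion is that $\varphi(t)$ is bounded on $M$, so $\tilde\varphi(t) = \varphi(t) - t\log\log^2\|S\|^2_h \to -\infty$ as $\|S\|\to 0$ at \emph{every} positive time. A "uniform $L^\infty(\ol M)$ bound on $\tilde\varphi_\e$" cannot hold, so the maximum-principle comparison you propose on $\ol M$ in the $\tilde\varphi$ variable has no chance to run. The paper avoids this by keeping $\varphi$ as the unknown, working on $M$ relative to the Carlson--Griffiths background (which has bounded geometry), and proving $|\varphi_\e|\le C$, $|\dot\varphi_\e|\le C/t$ directly via the Omori--Yau maximum principle (Lemma \ref{C0-smooth-appr}); the lower bound for $\varphi_\e$ there uses $\theta_{t,\e}\ge c_1(t+\e)\widehat\omega$ to cancel the unbounded term rather than a change of variables.

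Second, the Ko\l odziej-type pluripotential estimate you invoke for the lower $C^0$ bound is unavailable here: in your $\tilde\varphi$ normalization the Monge--Amp\`ere measure is $e^{\p_t\tilde\varphi}\Omega/\|S\|^2_h$, and $\Omega/\|S\|^2_h\sim |z_1|^{-2}\,dV$ is not even $L^1$ near $D$, let alone $L^p$ for $p>1$. The paper explicitly flags this (``the background Carlson--Griffiths metric has only $L^1$ volume form so we cannot approximate $\phi_0$ using the procedure from \cite{ST} which is based on Kolodziej's $L^p$ estimate'') and replaces it by a two-stage approximation: Theorem \ref{BK} provides a decreasing sequence $\varphi_j\in C^\infty(\ol M)\cap Psh(\ol M,\eta)$ converging locally smoothly on $M$, and then for each fixed $j$ the perturbation $\omega_j(0)-\e i\ddbar\log\log^2\|S\|^2_{\widehat h}$ is handled via Theorem 8.19 of \cite{LZ} (Lemma \ref{LZ1.1}); Theorem \ref{T1} is in fact proved \emph{after} Theorem \ref{T-bdd data} and is not used in its proof. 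Finally, while using $\log\tr_{\widehat\omega}\omega_\e$ for the second-order estimate can in principle work, the paper's choice $t\log\tr_{\omega_\e}\widehat\omega - A\varphi_\e$ with the Chern--Lu inequality is what actually lets the $t$-factor absorb the blow-up as $t\downarrow 0$ and produce $\tr_{\omega_\e}\widehat\omega\le e^{C/t}$; without writing out the competing computation one cannot tell that your variant gives an $\e$-uniform bound. In short: the outline is sound, but the two concrete estimates you rely on (an $L^\infty(\ol M)$ bound on $\tilde\varphi$ and a Ko\l odziej bound) fail, and the genuine content of the proof --- the $\e$-independent $C^0$ and Laplacian estimates of Lemmas \ref{C0-smooth-appr} and \ref{C2-smmoth-appr} --- is left as a promissory note.
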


\begin{rem} Theorem \ref{T-bdd data}  includes as a special case, when $\omega_0$ has conical singularities at $D$ or is in fact smooth on $\overline{M}$.
\end{rem}
 
Next we consider when $\omega_0$ is a complete metric on $M$ in which case $\varphi_0$ may be unbounded on $M$.  Our first result here is

\begin{thm}\label{T1}
 Let $\varphi_0\in C^{\infty}(M)\bigcap Psh(\overline{M}, \eta)$ have zero Lelong number such that
$$\omega_0=\eta+ i \partial \bar{\partial} \varphi _0\geq c\widehat{\omega}$$ for some $c>0$ and Carlson-Griffiths form $\widehat{\omega}$ on $M$.  Let $T_{[\omega_0 ]}$ be as in \eqref{existenceestimate1}.  Then the \K-Ricci flow \eqref{ckrf} has a smooth solution $\omega(t)$ on $M\times[0, T_{[\omega_0 ]})$  and
\begin{equation}\label{completeness0}
\omega(t)\geq (\frac{1}{n} -  \frac{4\hat{K}t}{c} )\widehat{\omega}\end{equation}
 for all $t\leq \frac{c}{4n\hat{K}}$  where $\hat{K}$ is a non-negative upper bound on the bisectional curvatures of $\widehat{\omega}$.  Moreover,
\begin{enumerate}
\item For any hermitian metric $h$  on $\mathcal{O}_D$  and  volume form $\Omega$ on $\overline{M}$, \eqref{pma0} and \eqref{krfansatz} hold  on $M\times[0, T_{[\omega_0 ]})$ where $\varphi(t)\leq c(t)$ on $M\times  [0, T_{[\omega_0 ]})$ for some continuous function $c(t)$.
\item Suppose further that $\omega_0$ is cusplike and $-C\log\log^2 \|S\|^{2}_{h}\leq \varphi_0$ on $M$ for some constant $C>0$, Hermitian metric $h$.  Then for any $0<T<T_{[\omega_0 ]}$, \eqref{pma0} and \eqref{krfansatz} hold on $M\times[0, T]$ where $-c\log\log^2 \|S\|^{2}_{h'}\leq \varphi(t)\leq c$ on $M\times[0, T]$ for some constant $c>0$ and Hermitian metric $h'$.
\end{enumerate}
\end{thm}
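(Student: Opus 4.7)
The plan is to prove existence by approximating the (potentially very singular or incomplete) initial data by smoother data for which a solution to \eqref{ckrf} is already available via Theorem \ref{T-bdd data}, derive uniform a priori estimates, and pass to the limit; the crucial metric lower bound will come from a parabolic Schwarz-lemma argument. Because $\varphi_0$ has zero Lelong number, Demailly's regularization produces a decreasing sequence $\varphi_{0,\epsilon}\in C^{\infty}(\overline M)\cap Psh(\overline M,(1+\epsilon)\eta)$ with $\varphi_{0,\epsilon}\searrow \varphi_0$. Combined with smooth regularizations $\widehat\omega_{\delta}$ of the Carlson--Griffiths form obtained by replacing $\log\log^2\|S\|_h^2$ with $\log(\log^2\|S\|_h^2+\delta)$, one forms approximate initial data $\omega_{0,\epsilon,\delta}:=\eta+i\ddbar\varphi_{0,\epsilon}+\delta\widehat\omega_{\delta}$ which is smooth on $\overline M$, retains the lower bound $\omega_{0,\epsilon,\delta}\geq c\widehat\omega$ on $M$, and whose cohomology class converges to $[\omega_0]$. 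Theorem \ref{T-bdd data} then produces smooth solutions $\omega_{\epsilon,\delta}(t)$ of \eqref{ckrf} with potentials $\varphi_{\epsilon,\delta}(t)$ solving \eqref{pma0} on intervals $[0,T_{\epsilon,\delta})$ that exhaust $[0,T_{[\omega_0]})$ as $\epsilon,\delta\to 0$, since $T_{[\omega_0]}$ depends only on the cohomology class.

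The central step is a uniform lower bound of $\omega_{\epsilon,\delta}(t)$ by a positive multiple of $\widehat\omega$. Setting $u(t):=\mathrm{tr}_{\omega_{\epsilon,\delta}(t)}\widehat\omega$, the standard parabolic Schwarz-lemma computation using the upper bound $\hat K$ on the bisectional curvature of $\widehat\omega$ yields an inequality of the form
\[
\left(\frac{\partial}{\partial t}-\Delta_{\omega_{\epsilon,\delta}(t)}\right)\log u \;\leq\; C\hat K\, u,
\]
while the hypothesis $\omega_0\geq c\widehat\omega$ gives $u(0)\leq n/c$. The reciprocal $y=1/u$ then satisfies $\dot y\geq -C\hat K$ at a space-time maximum of $u$; applying the maximum principle produces an upper bound on $u(t)$, which together with the eigenvalue identity $\min_i\lambda_i(t)\geq 1/u(t)$ yields the asserted metric lower bound $\omega_{\epsilon,\delta}(t)\geq(\tfrac{1}{n}-\tfrac{4\hat Kt}{c})\widehat\omega$ on $[0,\tfrac{c}{4n\hat K}]$ after tracking the precise constants. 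Since $M$ is non-compact, the maximum principle must be implemented through a barrier of the form $-A\log\log^2\|S\|_h^2$ that exploits the completeness and bounded curvature of $\widehat\omega$; constructing this barrier and verifying that it forces any attempted maximum of $u$ onto a compact set is the main technical obstacle.

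Once the metric lower bound is in hand, the upper bound $\varphi_{\epsilon,\delta}(t)\leq c(t)$ in (1) follows by the maximum principle applied to \eqref{pma0}, because the right-hand side is then uniformly bounded above on any $[0,T]$ with $T<T_{[\omega_0]}$; extracting a $C^{\infty}_{\mathrm{loc}}$-limit on $M\times(0,T_{[\omega_0]})$ produces the desired solution $\omega(t)$ and potential $\varphi(t)$. For (2), the additional hypotheses that $\omega_0$ is cusplike and $\varphi_0\geq -C\log\log^2\|S\|_h^2$ allow one to compare $\varphi_{\epsilon,\delta}(t)$ from below with a time-dependent barrier involving $-\log\log^2\|S\|_{h'}^2$ for an appropriately chosen Hermitian metric $h'$, yielding the stated two-sided cusplike estimate on $\varphi(t)$ in the limit.
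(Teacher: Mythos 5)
Your approximation scheme breaks down at its very first step. You define $\omega_{0,\epsilon,\delta}=\eta+i\ddbar\varphi_{0,\epsilon}+\delta\widehat\omega_\delta$ with $\varphi_{0,\epsilon}\in C^{\infty}(\overline M)$ and $\widehat\omega_\delta$ a smooth regularization of the Carlson--Griffiths form, so $\omega_{0,\epsilon,\delta}$ is a smooth $(1,1)$-form on $\overline M$. It is then impossible for $\omega_{0,\epsilon,\delta}\geq c\widehat\omega$ to hold, because $\widehat\omega$ blows up like $\frac{idz^1\wedge d\bar z^1}{|z^1|^2\log^2|z^1|^2}$ near $D$ while $\omega_{0,\epsilon,\delta}$ stays bounded; your claimed retention of the lower bound $\geq c\widehat\omega$ is false. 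Without it, $u(0)=\mathrm{tr}_{\omega_{\epsilon,\delta}(0)}\widehat\omega$ is unbounded, the Chern--Lu/Schwarz-lemma argument has no starting estimate, and the whole chain giving \eqref{completeness0} collapses. This is not a technical inconvenience you can patch with a barrier: the lower bound by $\widehat\omega$ and smoothness up to $D$ are mutually exclusive, so any approximation that preserves both is a contradiction.

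The paper resolves exactly this tension by allowing the approximating potentials to remain \emph{singular} at $D$. Using that $\varphi_0$ has zero Lelong number, they show $\alpha\log\|S\|_{\widehat h}^2+\delta\log\log^2\|S\|_{\widehat h}^2+\varphi_0\in Psh(\overline M,(1-\delta)\eta)$ for small $\alpha>0$, apply Theorem \ref{BK} to get $\psi_{\alpha,j}\in C^{\infty}(\overline M)\cap Psh(\overline M,(1-\delta)\eta)$, and then set $\varphi_{\alpha,j}=-\delta\log\log^2\|S\|_{\widehat h}^2+\psi_{\alpha,j}$. The point is that $\omega_{\alpha,j}(0)=\eta+i\ddbar\varphi_{\alpha,j}=\delta\widehat\omega+(1-\delta)\eta+i\ddbar\psi_{\alpha,j}\geq\delta\widehat\omega$, complete with bounded curvature on $M$, so the approximating flows come from the Lott--Zhang theorem for complete bounded-curvature cusp data (Theorem 8.19 of \cite{LZ}), \emph{not} from Theorem \ref{T-bdd data}. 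This two-parameter construction (the extra $\alpha\log\|S\|^2$ to absorb the Lelong-number-zero singularity, the $-\delta\log\log^2\|S\|^2$ to preserve completeness and the uniform cusp lower bound) is the essential idea your proposal misses. Also, the paper's $C^0$ lower bound for $\varphi_{\alpha,j}(t)$ uses a barrier of the form $\widetilde\beta\log\|S\|^2+L_{\widetilde\beta}(t)$ built from the precise volume estimates in Lemma \ref{C0estimateprelemma}, not the $-A\log\log^2\|S\|^2$ you suggest; and the final inequality \eqref{completeness0} is not derived by a bare hand computation but by applying Theorem 4.1 of \cite{Chau-Li-Tam} to the approximating family, which requires the uniform bound $\omega_{\alpha,j}(0)\geq\delta\widehat\omega$ that your scheme lacks.
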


\begin{rem}
It can be proved that there is a unique solution $g(t)$ satisfying (\ref{completeness0}) and (1) on the time interval $[0,\frac{c}{4n\hat{K}})$, though it is not known if our solution is unique such on the whole time interval $[0,T_{[\omega_0]})$.  Uniqueness of complete bounded curvature solutions to the real Ricci flow (in particular \eqref{ckrf}) was proved in \cite{Chen-Zhu} and for a more general class of complete solutions to \eqref{ckrf} in \cite{Chau-Li-Tam3}.
\end{rem}

Note also that Theorem \ref{T1}  leaves open the possibility that the solution may exist beyond $t=T_{[\omega_0 ]}$.  Note also, in Theorem \ref{T1} $\omega_0$ is complete while the solution may not be complete for all positive times.  Meanwhile in Theorem \ref{T-bdd data}, $\omega_0$ may be incomplete while the solution is complete for all positive times.  This seems counterintuitive, and is a result of the stronger a priori estimates in the case $\varphi_0$ is bounded.  On the other hand, (2) says cusplikeness is preserved at the potential level  for all times in some sense (see \S 2.1). If we assume $\omega_0$ above in fact has bounded curvature and is sufficiently asymptotic to the standard model at $D$ in a sense, the following theorem says the solution is indeed cusplike for all times, and $[0, T_{\omega})$ is indeed a maximal time interval.

\begin{thm} \label{T3}
 Let $\eta$ be a smooth \K form on $\overline{M}$ and $\widehat{\omega}=\eta-i\ddbar \log\log^2\|S\|^2$ be a Carlson-Griffiths form on $M$.  Let $\omega_0=\widehat{\omega}+i\ddbar \varphi$ be a smooth complete bounded curvature \K metric on $M$ such that $\frac{\varphi}{\log\log^2\|S\|^2}\to 0$, $\frac{|d\varphi|_{\widehat{\omega}}}{\log\log^2\|S\|^2}\to 0$ and $|\omega_0-\widehat{\omega}|_{\widehat{\omega}}\to 0$ as $\|S\|\to 0$.   Let $T_{[\omega_0 ]}$ be as in \eqref{existenceestimate1}. Then \eqref{ckrf} has a unique smooth maximal bounded curvature solution $\omega(t)$ on $M\times[0, T_{[\omega_0 ]})$.
\end{thm}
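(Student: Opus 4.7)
The strategy is to bootstrap from the existence statement of Theorem \ref{T1}(2) to bounded curvature and to show that $T_{[\omega_0]}$ is sharp. First I would verify the hypotheses of Theorem \ref{T1}(2): the assumption $\varphi/\log\log^2\|S\|^2 \to 0$ as $\|S\|\to 0$ gives, in particular, $-C\log\log^2\|S\|_h^2 \leq \varphi_0$ on $M$ for some $C>0$, and $\omega_0$ is cusplike by the assumption $|\omega_0-\widehat{\omega}|_{\widehat{\omega}}\to 0$. Thus Theorem \ref{T1}(2) produces, for each $0<T<T_{[\omega_0]}$, a smooth solution $\omega(t)=\theta_t+i\partial\bar\partial\varphi(t)$ of \eqref{ckrf} on $M\times[0,T]$ with $-c\log\log^2\|S\|_{h'}^2\leq\varphi(t)\leq c$. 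This is the backbone of the proof; the remaining work is to promote this to bounded curvature, establish uniqueness, and rule out extension past $T_{[\omega_0]}$.

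The first new ingredient is a uniform metric equivalence $C^{-1}\widehat{\omega}\leq\omega(t)\leq C\widehat{\omega}$ on $M\times[0,T]$. I would apply a Chern-Lu/Aubin-Yau type maximum principle to the quantity $u(t):=\log\operatorname{tr}_{\widehat{\omega}}\omega(t)-A(\varphi(t)-\inf\varphi(t))$ for sufficiently large $A$ depending on the (bounded) bisectional curvature of $\widehat{\omega}$. The three asymptotic conditions on $\omega_0$ ensure that $u(0)$ is bounded on $M$, and the bounds on $\varphi(t)$ from step one, combined with the fact that $\widehat{\omega}$ has bounded geometry and the $\|S\|^2\log^2\|S\|^2$ factor in \eqref{pma0} is tailored for the cusplike model, give a uniform upper bound on $u$ via a standard barrier against $-\epsilon\log\log^2\|S\|^2$; this yields $\omega(t)\leq C\widehat{\omega}$, and the reverse bound follows from the Monge-Amp\`ere equation together with the lower bound on $\dot\varphi$.

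Once $\omega(t)$ is uniformly equivalent to $\widehat{\omega}$ on $M\times[0,T]$, the flow is uniformly parabolic relative to the complete bounded geometry background $\widehat{\omega}$, and Shi-type local derivative estimates propagate the initial bounded curvature forward. More precisely, Shi's estimates applied in quasi-coordinate charts adapted to the cusp (where $\widehat{\omega}$ has bounded geometry to all orders) yield uniform bounds on $|\nabla^k\mathrm{Rm}(\omega(t))|_{\omega(t)}$ on $M\times[\delta,T]$ for each $\delta>0$; the bounded curvature assumption on $\omega_0$ itself handles the interval $[0,\delta]$ via the short-time Shi estimate starting from $t=0$. Uniqueness among bounded curvature solutions then follows from Chen-Zhu \cite{Chen-Zhu} (or from the results in \cite{Chau-Li-Tam3} cited in the paper).

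Finally, for maximality, I would argue cohomologically: because $\widehat{\omega}$ is a Carlson-Griffiths form, its Ricci form represents $-c_1(K_{\overline{M}})-c_1(\mathcal{O}_D)$, so along the flow the class of $\omega(t)$ is $[\eta]+t(c_1(K_{\overline{M}})+c_1(\mathcal{O}_D))$; any bounded curvature cusplike Kähler solution must have this cohomology class, which exits $\mathcal{K}_{\overline{M}}$ at $t=T_{[\omega_0]}$. The main obstacle in carrying this out is the second order estimate yielding uniform equivalence $\omega(t)\sim\widehat{\omega}$ globally on the noncompact $M$: the argument hinges on exploiting the three asymptotic conditions on $\omega_0$ simultaneously as barrier data at infinity (that is, as $\|S\|\to 0$), and any weaker decay would force one to work with genuinely unbounded quantities at $D$, defeating the maximum principle. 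Everything else — Shi's estimates, uniqueness, and the class argument — is then standard.
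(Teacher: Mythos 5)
Your proposal takes a genuinely different, and much more laborious, route than the paper, and as written it has several gaps. The paper does not attempt to upgrade the solution from Theorem \ref{T1}(2) to bounded curvature. Instead it proves a clean existence-time reduction (Theorem \ref{extime}): set $\gamma=\log\log^2\|S\|^2$, take a cutoff $\rho_R=\rho(\gamma/R)$ supported where $\gamma\leq 2R$, and form $\omega_R=\widehat{\omega}+i\ddbar(\rho_R\varphi)$. Your three decay hypotheses on $\varphi$ are exactly what make $\omega_R$ a complete bounded-curvature metric uniformly equivalent to $\omega_0$ with comparison constants $C_R\to 1$ as $R\to\infty$. Because $\rho_R\varphi$ has compact support, Theorem 4.1 of \cite{LZ} gives $T(\omega_R)=T(\widehat{\omega})$, while the existence-time comparison of \cite{Chau-Li-Tam2} gives $C_R^{-1}T(\omega_R)\leq T(\omega_0)\leq C_R T(\omega_R)$. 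Letting $R\to\infty$ yields $T(\omega_0)=T(\widehat{\omega})=T_{[\omega_0]}$, with uniqueness from \cite{Chen-Zhu}. No a priori estimates on the Monge--Amp\`{e}re equation over the noncompact manifold $M$ are needed at all.

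Against that, several of your steps do not close. The test quantity $\log\tr_{\widehat{\omega}}\omega(t)-A\bigl(\varphi(t)-\inf\varphi(t)\bigr)$ is ill-defined: Theorem \ref{T1}(2) only gives $\varphi(t)\geq -c\log\log^2\|S\|^2$, so $\inf_M\varphi(t)=-\infty$; you would need a genuine cusp barrier, and obtaining a uniform \emph{global} (not just local) trace estimate on $M$ under the T3 decay hypotheses is precisely the hard analytic content that the paper is designed to avoid. Even granting such an estimate, the solution produced by Theorem \ref{T1}(2) is not a priori identical to the Shi short-time bounded-curvature solution from $\omega_0$ --- uniqueness of complete solutions without curvature bounds is open here, as the remark following Theorem \ref{T1} explicitly notes --- so your ``promotion'' would establish bounded curvature for a solution that you have not shown to be the one you started Shi's theorem from. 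Moreover, propagating bounded curvature forward from $\omega_0$ is not handled by Shi's estimates alone; one must couple uniform metric equivalence with Evans--Krylov in quasi-coordinates and iterate. Finally, the cohomological observation at the end only gives the upper bound $T\leq T_{[\omega_0]}$; existence up to $T_{[\omega_0]}$ is the part that must be proved, and the paper gets it by reducing to Lott--Zhang's compactly supported case rather than by re-deriving their estimates.
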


\begin{rem}  In \cite{LiuZhang} the authors studied \eqref{ckrf} in the case when $T_{[\omega_0 ]}=\infty$ in \eqref{existenceestimate1} and the initial data $\omega_0$ is a Carlson-Griffiths metric (see \S 2.1) which is a certain limit of metrics with conical singularities at $D$.   They constructed a solution $\omega(t)$ which in fact satisfies

\be\label{currentkrf}
 \left\{
   \begin{array}{ll}
     \displaystyle\frac{\partial \omega(t)}{\partial t}=-Ric(\omega(t))-\omega(t) +[D] \\
     \omega(0)  = \omega_0.
   \end{array}
 \right.
 \ee
in the sense of currents on $\overline{M}\times[0, \infty)$ where $[D]$ is the current of integration along $D$.  By the use of \eqref{pma0}, \eqref{krfansatz}, we can likewise show that if $\omega(t)$ is from Theorem \ref{T3} then $e^{-t} \omega(e^t-1)$ will solve \eqref{currentkrf} in the sense of currents on $\overline{M}\times[0, \log (T_{[\omega_0 ]} +1)$.  Here we have used the fact that $\omega(t)$ is equivalent to  Carlson-Grifiths metric and has uniformly bounded curvature on compact time intervals of $M\times[0, T_{[\omega_0 ]})$.
\end{rem}

\begin{rem}
In our results above we only considered the case of a single smooth divisor $D \subset \overline{M}$.  On the other hand, straight forward extensions of our definitions and techniques allow us also to consider the case of some collection of simple normal crossing divisors $D_1,..,D_k$ in which case we can have similar statements as in our main theorems for \K metrics on $M$ which are cusplike at each $D_i$.
\end{rem}

 The \K Ricci flow \eqref{ckrf} has been studied in several earlier works for singular initial metrics on compact \K manifolds.  We describe some of those results which in some apsects can be compared to our reuslt, and we refer therein for further references. The flow was studied for general singular initial metrics on compact \K manifolds in \cite{ST, GZ, DiLu}.  In the results there a solution to \eqref{ckrf} was constructed under the hypothesis in Theorem \ref{T1} where the solution is smooth on $\overline{M}$ for all positive times and converges to the initial data at time $0$ in some weak sense.  In the case, the solution we construct is different since as it is complete on $M$ for some positive times.   In \cite{CTZ}, solutions having conical singularities at $D$ were introduced and studied, and in \cite{Shen1, Shen2} it was shown that these solutions exist up to a maximal time given by \eqref{existenceestimate1} but with a factor  $(1-\beta)$ added in front of the term $\mathcal{O}_D$ where $\beta$ is the cone angle along $D$.  In particular, this factor is $1$ when letting $\beta =0$ in which case we recover the formula in \eqref{existenceestimate1} and this is consistent with our results if we view cusps as cones with angle $\beta=0$.  

\begin{ack} The first author would like to thank John Lott, Slawomir Kołodziej and Luen-Fai Tam for useful conversations.  The third author would also like to thank Gang Tian and Zhou Zhang for useful conversations.
\end{ack}

\section{Preliminaries}

Let $\overline{M}$ be a compact complex manifold with smooth \K metric $\eta$ and smooth divisor $D$, and let $M=\overline{M}\setminus D$ as in the introduction.  Throughout the paper we will use the following notation and definitions.

\begin{defn}\label{1stdefn} 
We will adopt the following notations and definitions

\vspace{5pt}

$\mathcal{K}_M$: the space of \K classes on $\overline{M}$

\vspace{5pt}

  $K_M$: the canonical bundle on $\overline{M}$
\vspace{5pt}

 $c_1(L)$: the first Chern class of a holomorphic line bundle  $L$ on $\overline{M}$

\vspace{5pt}

 $[\sigma]$: the $H^{1,1}(\overline{M},\mathbb{R})$ cohomology class of a 1-1 current $\sigma$ on $\overline{M}$

\vspace{5pt}

$\mathcal{O}_D$: the holomorphic line bundle associated to $D$

\vspace{5pt}

$S$: a holomorphic section of $\mathcal{O}_D$ vanishing exactly at $D$.

\vspace{10pt}

 $\Theta_h$: the curvature form of a hermitian metric $h$ on $\mathcal{O}_D$

($\Theta_h=-i\ddbar \log h$ locally on $\overline{M}$, and $\Theta_h=-i\ddbar \log \|S\|^2_h$ globally on $M$)

\vspace{10pt}
$Psh(\overline{M}, \eta)$: the set of plurisubharmonic functions on $\overline{M}$ relative
 to  

 $\eta$ (see definition \ref{psh} for details)

\end{defn}

\subsection{Carlson-Griffiths forms} Let $h$ be a Hermitian metric on $\mathcal{O}_D$.  Consider the Carlson-Griffiths type form on $M=\overline{M}\setminus D$  associated to $\eta, h$: \begin{equation}\label{CGmetric}\begin{split}\widehat{\omega}_{\eta, h}:=&\overline{\eta} - i\ddbar (\log\log^2 \|S\|^{2}_{h})\\
&=\overline{\eta} -2i\frac{\ddbar \log \|S\|^{2}_{h}}{\log \|S\|^{2}_{h}}+2i \frac{\partial \log \|S\|^{2}_{h}}{\log \|S\|^{2}_{h}}\wedge \frac{\bar{\partial} \log \|S\|^{2}_{h}}{\log \|S\|^{2}_{h}}\\
\end{split}\end{equation} as introduced  in \cite{CG72}.  We now observe some facts about Carlson-Griffiths forms and we refer to \cite{CG72}  (see also \cite{Guenancia}) for more details and explanations.

First, from the last line in \eqref{CGmetric} we may scale $h$ so that $\widehat{\omega}_{\eta, h}$ is positive on $M$ in which case we will refer it as a Carlson-Griffiths type metric on $M$.  Furthermore, around each point $p\in D$ there are holomorphic coordinates $z_1,..,z_n$ were $D=\{z_1=0\}$ in which case $\widehat{\omega}_{S, h}$ is equivalent to the local  model $$\displaystyle \frac{i dz^1 \wedge dz^{\bar{1}}}{|z^1|^2 \log^2 |z^1|^2} + i \sum_{j=2}^{n} dz^j \wedge dz^{\bar{j}}.$$
In particular Carlson-Griffiths type metrics are complete on $M$, any two are equivalent on $M$, and they also satisfy

\begin{enumerate}
\item $\hat{ \omega}_{\eta, S, h}$ has bounded geometry of infinite order.
\item $-\log\log^2 \|S\|^{2}_{h}$ is bounded above and in $L^1(\overline{M})$
\item $\log\frac{\widehat{\omega}_{S, h}^{n}\|S\|_{h}^{2}\log^{2}\|S\|_{h}^{2}}{\Omega}$ is bounded on $M$ where $\Omega$ is any smooth volume form on $\overline{M}$
\end{enumerate}
 In particular (2)  implies that $\widehat{\omega}_{\eta, h}$ is a well defined current on $\overline{M}$. (see for example \cite{LZ} (\S8, example 8.15)).   
%Carlson-Griffiths type metrics were used by \cite{Koba}, \cite{Tian-Yau} to produce a \K Einstein metrics with negative %scalar curvature on $M$ when $K_M+[D]$ is ample.

\subsection{Approximation of functions in $Psh(\overline{M}, \eta)$}
Recall the following
\begin{defn}\label{psh}
 $\varphi$ is called $plurisubharmonic$ on the compact \K manifold $(\overline{M}, \eta)$, written $\varphi\in Psh(\overline{M}, \eta)$, when $\varphi:\overline{M}\to \R$ is upper semi-continuous and bounded above, and for any local holomorphic coordinate domain $U_{\alpha}$, $\eta_{\alpha}+\varphi$ is a classical plurisubharmonic function in $U_{\alpha}$ where $\eta_{\alpha}$ is a local \K potential.  In this context $\varphi\in C^{\infty}(M)\bigcap Psh(\overline{M}, \eta)$ is said to have $zero$ $Lelong$ $number$ if for any $c>0$ we have $$\lim_{d(x, D) \to 0} \frac{\varphi(x)}{ c\log \|S\|_h } \to 0$$ where $d(\cdot, D)$ is the distance to $D\subset M$ relative to $\eta$.
\end{defn}

Let $\varphi\in Psh(\overline{M}, \eta))$ be given.  By \cite{Demailly}, or \cite{Blocki-Koloziej} for a simpler proof in our setting, there exists a decreasing sequence $\varphi_j \in  C^{\infty}(\overline{M})\bigcap Psh(\overline{M}, \eta)$ converging pointwise to $\varphi$.  By a slight modification of the proof in \cite{Blocki-Koloziej}, we may assume this convergence actually holds in $C_{loc}^{\infty}(M)$ when $\varphi\in C^{\infty}(M)$.  We include the statement and proof of this below for completeness.

\begin{thm}\label{BK}
 Suppose that $\varphi \in Psh(\overline{M},\eta)\cap C^{\infty}(M)$. 
Then there exists a sequence $\varphi_j \in C^{\infty}(\overline{M})$ with $\varphi_j \downarrow \varphi$ pointwise on $\overline{M}$ and smoothly uniformly on compact subsets of $M$.
\end{thm}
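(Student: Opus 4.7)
The plan is to adapt the construction of \cite{Blocki-Koloziej} of decreasing smooth $\eta$-psh approximations, tracking how the regularization behaves on the open locus $M$ where $\varphi$ is already smooth. First, recall the framework: cover $\overline{M}$ by finitely many coordinate balls $\{U_\alpha\}$ carrying local $\eta$-potentials $\rho_\alpha$, so that $u_\alpha := \varphi + \rho_\alpha$ is classically plurisubharmonic on $U_\alpha$. Standard mollification $u_{\alpha,\epsilon}:=u_\alpha * \chi_\epsilon$ on slightly shrunken charts $U'_\alpha$ gives smooth psh functions that decrease to $u_\alpha$ as $\epsilon \downarrow 0$; after subtracting $\rho_\alpha$ and absorbing the mild loss of $\eta$-psh-ness with an $O(\epsilon)$ correction, one obtains smooth $\eta$-psh approximants $\varphi_{\alpha,\epsilon}$ on $U'_\alpha$ that decrease to $\varphi$. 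A Richberg-type regularized maximum then patches these local pieces into a globally defined $\varphi_\epsilon \in C^\infty(\overline{M}) \cap Psh(\overline{M},\eta)$, and taking $\epsilon = \epsilon_j \downarrow 0$ yields the monotone sequence $\varphi_j \downarrow \varphi$ pointwise on $\overline{M}$.

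The new observation is that for any compact $K \subset M$ and any index $\alpha$, the local function $u_\alpha$ is already smooth on $K \cap U'_\alpha$ because $\varphi$ is smooth on $M$. Consequently, standard properties of convolution give $u_{\alpha,\epsilon} \to u_\alpha$ in $C^k(K \cap U'_\alpha)$ for every $k \geq 0$, and therefore $\varphi_{\alpha,\epsilon_j} \to \varphi$ in $C^k(K \cap U'_\alpha)$ as $j \to \infty$. So before any gluing, each local approximation converges smoothly on $K$.

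It remains to verify that smooth convergence survives the Richberg gluing. The regularized maximum $\max_\delta(f_1,\ldots,f_N)(x) = \Phi_\delta(f_1(x),\ldots,f_N(x))$ is built by precomposing with a smooth symmetric $\Phi_\delta : \mathbb{R}^N \to \mathbb{R}$ that agrees with the ordinary max outside a $\delta$-neighborhood of the coincidence locus $\{y_1=\cdots=y_N\}$. On $K$, where all $f_\alpha = \varphi_{\alpha,\epsilon_j}$ are $C^k$-close to the common function $\varphi$, a direct chain rule calculation shows that $\max_\delta(f_1,\ldots,f_N) - \varphi$ is controlled in $C^k(K)$ by an explicit combination of the norms $\|f_\alpha-\varphi\|_{C^k(K)}$ and inverse powers of $\delta$ coming from derivatives of $\Phi_\delta$. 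By coupling $\delta = \delta_j$ to the rate at which $\varphi_{\alpha,\epsilon_j}\to\varphi$ in $C^k$ on an exhaustion $K_1\Subset K_2\Subset\cdots\Subset M$ and diagonalizing, one obtains $\varphi_j \to \varphi$ in $C^\infty_{\mathrm{loc}}(M)$ while preserving the monotone pointwise convergence on all of $\overline{M}$.

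The main obstacle is precisely this coupling: $\delta_j$ must decrease slowly enough that the inverse-$\delta_j$ factors in the higher derivative estimates are dominated by the smallness of $\|\varphi_{\alpha,\epsilon_j} - \varphi\|_{C^k}$, yet fast enough that the monotone pointwise decrease to $\varphi$ on $\overline{M}$ (including at points of $D$, where $\varphi$ may attain $-\infty$) is preserved. Since the $C^k$ rate of convergence of $\varphi_{\alpha,\epsilon}$ to $\varphi$ on any fixed compact $K\Subset M$ can be made as fast as one likes by choosing $\epsilon$ small, this is a careful but standard sequencing argument rather than a fundamentally new difficulty beyond the original approximation result in \cite{Blocki-Koloziej}.
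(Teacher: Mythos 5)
Your proposal pursues a genuinely different route from the paper's, and the comparison is instructive. The paper never estimates how the regularized maximum perturbs higher derivatives; it sidesteps the question entirely by a structural device. After a preliminary truncation $\varphi_j=\max\{\varphi,m_j\}+\tfrac1j$ reducing to $\varphi\in C^0(\overline M)\cap C^\infty(U)$, the paper chooses the open cover $\{U_\alpha\}_{\alpha=0}^k$ so that the mollified charts $W_\alpha$ ($\alpha\geq1$) are \emph{disjoint} from $\overline V$, and takes the chart $U_0=U$ with no mollification at all: $\varphi_{0,\delta}=\varphi+\tfrac{\epsilon}{C}g_0$ where $g_0\equiv0$ on $\overline V$. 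On $\overline V$ no other chart contributes, so the regularized maximum there is literally $\varphi$ and the desired smooth convergence on $V$ is a tautology. Your route instead keeps an arbitrary cover, mollifies everywhere, and tries to prove the regularized maximum preserves $C^\infty_{\mathrm{loc}}$ closeness.

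Your route can probably be made to work, but as written it has a real gap, and it is precisely at the step you label "a direct chain rule calculation." Naively, $\nabla^m\bigl(\Phi_\delta(f_1,\dots,f_N)\bigr)$ contains terms of order $\delta^{-(m-1)}\prod_i\|\nabla f_{j_i}\|$, and since the $\|\nabla f_j\|$ are of order one (not small), these terms \emph{diverge} as $\delta\to0$ no matter how fast the $\|f_\alpha-\varphi\|_{C^k}$ decay. The estimate only closes after exploiting the constant-shift invariance $\Phi_\delta(y+c\mathbf{1})=\Phi_\delta(y)+c$ — equivalently $\sum_i\partial_{y_i}\Phi_\delta=1$ and $\sum_i\partial_{y_i}\partial^{p}\Phi_\delta=0$ for $|p|\geq1$ — which lets you write $\Phi_\delta(f)=\varphi+\Phi_\delta(f-\varphi)$ and replace every $\nabla f_j$ in the dangerous terms by $\nabla(f_j-\varphi)$. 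Only then does the worst term become $\delta^{-(m-1)}\,\nu_1^{\,m}$ with $\nu_1=\max_\alpha\|f_\alpha-\varphi\|_{C^1(K)}$, and this tends to zero provided $\nu_1$ is comparable to $\delta$. You also glide over the fact that the regularized-max parameter $\delta$ is not free: for the pieces to glue, $\delta$ must be bounded above by the bump-function scale used in the B\l{}ocki–Ko\l{}odziej construction, which is itself comparable to $\nu_1$; the same constraint is what prevents you from just taking $\delta$ much larger. Stated this way the coupling works, but the argument is considerably more delicate than "standard sequencing," and without the cancellation it simply fails. If you want to keep your approach you should state and prove the shift-invariance cancellation explicitly; otherwise the paper's cover-trick is a cleaner way to the same theorem.
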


 \begin{proof}
Let $S_j$ be an increasing sequence of open sets exhausting $M$ where each $\overline{S_j}$ is compact.  Let $m_j=\min \{\varphi(x): x\in \overline{S_j}\}$ and define $\varphi_j= \max\{\varphi,m_j\}+\frac{1}{j}$.  Then $\varphi_j\in C^0(\overline{M})\bigcap C^{\infty}(S_j)\bigcap Psh(\overline{M},\eta)$ with $\varphi_j \downarrow \varphi$ pointwise on $\overline{M}$ and smoothly uniformly on compact subsets. Now for each $j$, suppose there exists a sequence $\varphi_{j,k}\in Psh(\overline{M},\eta)\cap C^{\infty}(\overline{M})$ with $\varphi_{j,k}\downarrow \varphi_j$ pointwise uniformly on $\overline{M}$ and smoothly uniformly on $S_{j-1}$.  Then for any diverging sequence $a_j$, we have $\varphi_{j, a_j}\downarrow \varphi$ pointwise on $\overline{M}$ and smoothly uniformly on compact subsets.  Moreover, by the fact  $\varphi_j-\varphi_{j+1}\geq \frac{1}{j}-\frac{1}{j+1}$, it is clear that we may choose some sequence $a_j$ with $\varphi_{j, a_j}\downarrow \varphi$.

 From the above, it suffices now to prove the following
 \vspace{10pt}

CLAIM: if $\varphi\in C^0(\overline{M})\bigcap C^{\infty}(U)\bigcap Psh(\overline{M},\eta)$ for some open set $U$ and $V$ is a precompact open subset of $U$, there exists a sequence $\varphi_j \in C^{\infty}(\overline{M})\bigcap  Psh(\overline{M},\eta)$ with $\varphi_j \downarrow \varphi$ pointwise on $\overline{M}$ and smoothly uniformly on $V$.
 \vspace{10pt}

 The claim follows from a slight modification of the proof of the main Theorem in \cite{Blocki-Koloziej}.  In \cite{Blocki-Koloziej}, an arbitrary open cover $U_{\alpha}$ of $\overline{M}$ is first chosen, then in each $U_{\alpha}$ a smooth local approximation $\varphi_{\alpha,\delta}$ of $\varphi$ is constructed through the use of local \K potentials  and mollification.  Then for fixed $\delta$, a global smooth approximation of $\varphi$ on $\overline{M}$ is defined as the pointwise regularized maximum (from \cite{Demailly}) of the $\varphi_{\alpha,\delta}$'s where the maximum is taken over all $\alpha$.  Then, by letting $\delta \to 0$, it is shown there exists a sequence $\varphi_j \in C^{\infty}(\overline{M})$ with $\varphi_j \downarrow \varphi$ pointwise on $\overline{M}$.  To have smooth convergence uniformly on $V$ we modify this construction slightly as follows.

First we choose some finite open cover $\{V_{\alpha}\}_{\alpha=1}^{k}$ of the compact set $\overline{M}\backslash U$.  Moreover, by compactness we may assume for each $\alpha>0$ we have a proper inclusion of open sets $V_{\alpha} \subset U_{\alpha}  \subset W_{\alpha}$ where: $W_{\alpha}\bigcap \overline{V}=\emptyset$ and $W_{\alpha}$ is a holomorphic coordinate neighborhood with smooth local \K potential $\eta=i\partial\bar{\partial}f_{\alpha}$.  Then letting $U_0:=U$ we take $\{U_{\alpha}\}_{\alpha=0}^{k}$ as our open cover of $\overline{M}$.

 Next we define local approximations  $\varphi_{\alpha,\delta}$ of $\varphi$ on each $U_{\alpha}$.  For $\alpha=0$, let
$g_{\alpha}$ be a smooth function which is equal to $0$ in $U\backslash\cup_{\alpha\neq0}V_{\alpha}$
and equals $-1$ outside some compact subset of $U$.  For all $\alpha\neq0$, let $g_{\alpha}$ be a smooth function in
$U_{\alpha}$ such that $g_{\alpha}=0$ in $V_{\alpha}$ and $g_{\alpha}=-1$
outside some compact subset of $U_{\alpha}$. Assume that
$i\partial\bar{\partial}g_{\alpha}\geq-C\omega$ for some $C$ independent
of $\alpha$.  Now we define $\varphi_{\alpha,\delta}$ on $U_{\alpha}$ as follows. If
$\alpha\neq0$, then as in \cite{Blocki-Koloziej} we let $$\varphi_{\alpha,\delta}=u_{\alpha,\delta}-f_{\alpha}+\frac{\epsilon}{C}g_{\alpha}$$
where $u_{\alpha,\delta}$
is a mollification of $u_{\alpha}:=\varphi+f_{\alpha}\in Psh(U_{\alpha})$ in $W_{\alpha}$ so that $u_{\alpha,\delta} \downarrow u_{\alpha}$ as $\delta \to 0$. If $\alpha=0$, then we let  $$\varphi_{0,\delta}=\varphi+\frac{\epsilon}{C}g_{0}$$
In both cases, we have $\varphi_{\alpha,\delta}\in Psh(U_{\alpha},(1+\epsilon)\eta)$
and it is non-increasing as $\delta$ decreases.

Now given our open cover $\{U_{\alpha}\}_{\alpha=0}^{k}$, and local approximations $\varphi_{\alpha,\delta}$ in these, the proof of the claim follows exactly as in  \cite{Blocki-Koloziej} involving the regularized maximum (from \cite{Demailly}) of the $\varphi_{\alpha,\delta}$'s where the maximum is taken over all $\alpha$.  Then, by letting $\delta \to 0$.  We refer to \cite{Blocki-Koloziej}  for details of this argument.

\end{proof}

\section{Proof of  Theorem \ref{T-bdd data}}

Analogous to \cite{GZ}, we will use Theorem \ref{BK} to construct approximation solutions to K\"ahler-Ricci flow from which we will take a limit to obtain the desired solution.  Our study here is similar in ways to \cite{LZ} where the authors also studied cusp type K\"ahler-Ricci flow solutions.  On the other hand, our work here is different from these in the following ways.  First, our initial metric is not cusplike (or even complete) as in the case of \cite{LZ}, yet we are looking solutions which are cusplike for positive times.  This is one reason that the Monge \A equation \eqref{pma0} we consider is somewhat different than the ones studied in other works.  Second, the background Carlson-Griffith metric has only $L^{1}$ volume form so we cannot approximate $\phi_0$ using the procedure from \cite{ST} which is based on Kolodziej's $L^{p}$ estimate. In the following we will introduce  approximation procedures to overcome these difficulties.  
We first make the following technical assumptions which we will use throughout the rest of the section.

\begin{ass}\label{defsmoothcase}
 Let $\eta,  \widehat{\omega}$ and $\phi_0$ be as in Theorem \ref{T-bdd data} and $T_{[\omega_0 ]}$ be as in \eqref{existenceestimate1}, and assume $\|S\|_{\widehat{h}}^{2}<1$ on $\overline{M}$ where $S, \widehat{h}$ are used to define $\widehat{\omega}$.

 Fix some $\widetilde{T}<T_{[\omega_{0}]}$, then choose some $\tilde{c}>0$ and a smooth volume form $\widetilde{\Omega}$ on $\overline{M}$ so that for $\widetilde{h}:=\tilde{c}\widehat{h}$ we have:
\begin{enumerate}
\item $\eta+t(-Ric(\widetilde{\Omega})+\Theta_{\widetilde{h}})>0$ on  $\overline{M}\times [0, \widetilde{T}]$
\item$\eta+t(-Ric(\widetilde{\Omega})+\Theta_{\widetilde{h}}+\frac{2\Theta_{\widetilde{h}}} { \log\|S\|_{\widetilde{h}}^{2} })>0$ on $M\times [0, \widetilde{T}]$

\end{enumerate}

 We will abbreviate $\|S\|_{\tilde{c}\widehat{h}}^{2}, \Theta_{\tilde{c}\widehat{h}}$ and $\widetilde{\Omega}$ simply by $\|S\|^{2}, \Theta$ and $\Omega$ repsectively.

\end{ass}

 By the definition of $T_{[\omega_0]}$ we can then choose $\widetilde{\Omega}$ such that (1) holds for $t=\widetilde{T}$.  Then inequality in (2) will also hold at $t=\widetilde{T}$ for $\tilde{c}$ sufficiently small depending on $\widetilde{T}$ (by the smoothness of $\Theta$ on $\overline{M}$).  The fact that (1) and (2) holds for all $t\in [0, \widetilde{T}]$ then follows by interpolation between $t=0$ and $t=\widetilde{T}$.

\subsection{Proof of  Theorem \ref{T-bdd data} when $\varphi_0\in C^{\infty} (\overline{M})$}

From the results in \S 2.1 and also Theorem 8.19 in \cite{LZ} we have

\begin{lem}\label{LZ1.1}
For $\epsilon>0$ sufficiently small (depending only on $\omega_0$), $\omega_{\epsilon,0}=\omega_0-\e\ddb\log\log^{2}\|S\|_{\widehat{h}}^{2}$ is a Carlson-Griffiths metric on $M$  and \eqref{ckrf} has a bounded curvature solution $\omega_{\epsilon} (t)$ on $M\times[0, T_{[\omega_0 ]})$ with initial data $\omega_{\epsilon,0}$.
\end{lem}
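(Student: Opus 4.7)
The plan has two steps: (i) verify that for $\epsilon>0$ sufficiently small (depending only on the constant $c$ in \eqref{lower}) the form $\omega_{\epsilon,0}$ is equivalent to a Carlson-Griffiths form on $M$ with bounded geometry of infinite order, and (ii) invoke Theorem 8.19 of \cite{LZ} to produce a bounded curvature solution of \eqref{ckrf} with initial data $\omega_{\epsilon,0}$ on $M\times[0,T_{[\omega_0]})$.

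For (i), I would first rewrite
\begin{equation*}
\omega_{\epsilon,0} \;=\; \omega_0 - \epsilon\eta + \epsilon\,\widehat{\omega}_{\eta,\widehat{h}},
\end{equation*}
using $\widehat{\omega}_{\eta,\widehat{h}}-\eta = -i\ddbar\log\log^{2}\|S\|_{\widehat{h}}^{2}$. The hypothesis $\omega_0\geq c\eta$ gives $\omega_0-\epsilon\eta\geq (c-\epsilon)\eta\geq 0$ once $\epsilon\leq c$, so $\omega_{\epsilon,0}\geq \epsilon\,\widehat{\omega}_{\eta,\widehat{h}}$. For the opposite bound, $\omega_0-\epsilon\eta$ is smooth on $\overline{M}$ hence $\leq C\eta\leq C'\widehat{\omega}_{\eta,\widehat{h}}$. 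Together these show $\omega_{\epsilon,0}$ is equivalent to $\widehat{\omega}_{\eta,\widehat{h}}$ on $M$, which is the definition of a Carlson-Griffiths type metric. Bounded geometry of infinite order for $\omega_{\epsilon,0}$ follows from property (1) of \S 2.1 applied to $\widehat{\omega}_{\eta,\widehat{h}}$, together with the fact that the smooth tensor $\omega_0-\epsilon\eta$ on $\overline{M}$ has uniformly bounded covariant derivatives of all orders in the quasi-coordinates adapted to $\widehat{\omega}_{\eta,\widehat{h}}$. In particular $\omega_{\epsilon,0}$ has bounded curvature.

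For (ii), note that $-\log\log^{2}\|S\|_{\widehat{h}}^{2}\in L^{1}(\overline{M})$ by property (2) of \S 2.1, so $-\epsilon\, i\ddbar\log\log^{2}\|S\|_{\widehat{h}}^{2}$ represents the zero class in $H^{1,1}(\overline{M},\mathbb{R})$; consequently $[\omega_{\epsilon,0}]=[\omega_0]=[\eta]$ and $T_{[\omega_{\epsilon,0}]}=T_{[\omega_0]}$. The difference $\omega_{\epsilon,0}-\epsilon\widehat{\omega}_{\eta,\widehat{h}}=\omega_0-\epsilon\eta$ is smooth on $\overline{M}$ and its $\widehat{\omega}$-norm, together with all covariant derivatives in the quasi-coordinate charts near $D$, decays to zero as $\|S\|\to 0$ (since $\widehat{\omega}\gg \eta$ near $D$). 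Thus $\omega_{\epsilon,0}$ satisfies the asymptotic and bounded curvature hypotheses of Theorem 8.19 of \cite{LZ}, which then delivers the desired bounded curvature solution on $M\times[0,T_{[\omega_0]})$.

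The main obstacle is the second step: one has to match $\omega_{\epsilon,0}$ precisely to the hypotheses of \cite[Theorem 8.19]{LZ} (in particular whatever ``super standard spatial asymptotic'' condition is required there) and to confirm that the maximal existence time predicted by that theorem really equals $T_{[\omega_0]}$. The cohomological identification $[\omega_{\epsilon,0}]=[\eta]$ handles the latter, while the smoothness of $\omega_0-\epsilon\eta$ across $\overline{M}$ handles the former, but both checks are where all the care needs to be concentrated.
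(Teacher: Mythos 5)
Your approach is essentially the same as the paper's, which in this instance gives no proof at all beyond the sentence ``From the results in \S 2.1 and also Theorem 8.19 in \cite{LZ} we have'' the Lemma; your write-up is a reasonable unpacking of that citation, and the decomposition $\omega_{\epsilon,0}=(\omega_0-\epsilon\eta)+\epsilon\widehat\omega_{\eta,\widehat h}$ together with the hypothesis $\omega_0\geq c\eta$ and the smoothness of $\varphi_0$ on $\overline M$ (valid in \S 3.1, where this Lemma appears) is the right way to see both the two-sided equivalence with $\widehat\omega$ and the decay of $\omega_{\epsilon,0}-\epsilon\widehat\omega$ near $D$ needed for the asymptotics hypothesis of \cite[Thm.\ 8.19]{LZ}. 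One small point worth phrasing carefully: $\omega_{\epsilon,0}$ is not literally of the form $\eta'-i\ddbar\log\log^2\|S\|^2_{h'}$ for any smooth $\eta'$ and hermitian $h'$ (rescaling $h$ only changes the inner logarithm multiplicatively, not the $\epsilon$ in front of the $i\ddbar\log\log^2$ term), so ``is a Carlson--Griffiths metric'' here has to be read as ``is uniformly equivalent, with bounded geometry, to a Carlson--Griffiths form'' --- exactly the class \cite[Thm.\ 8.19, Ex.\ 8.15]{LZ} treats; your phrasing ``equivalent to a Carlson--Griffiths form with bounded geometry of infinite order'' gets this right, and the paper itself uses the identical device in Lemma \ref{LZ} (writing the initial metric as a CG form plus $i\ddbar$ of a $C^\infty(\overline M)$ function).
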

  For any compact subset of $K\subset M$ we will derive estimates for $\omega_{\epsilon} (t)$ above on  $K\times [0, \widetilde{T}]$ which are uniform with respect to $\epsilon$ sufficiently small (depending on $\widetilde{T}$).  As $\widetilde{T}<T_{[\omega_0 ]}$ was arbitrary, this will ensure that $\omega_{\e}(t)$ converges locally unifomly uniformly on compact sets to a  smooth solution $\omega (t)$ to \eqref{ckrf} on $M\times[0, T_{[\omega_0 ]})$ with initial data $\omega_{0}=\eta+i\ddbar \varphi_{0}$.  

For any solution $\omega_{\epsilon}(t)$ we may write

\begin{equation}\label{999}\omega_{\epsilon} (t) =\theta_{\epsilon,t}+i\ddbar\varphi_{\epsilon}(t)\end{equation}
where $\varphi_{\epsilon}(t)$ solves the parabolic Monge \A equation on $M\times[0, T_{[\omega_0 ]})$:

\begin{equation}\label{pma-smooth-appr}
\begin{split}
\left\{
   \begin{array}{ll}
\partial_{t}\varphi_{\epsilon}(t)  = \displaystyle \log\frac{\|S\|^{2}\log\|S\|^{2}(\theta_{\epsilon, t}+i\partial\bar{\partial}\varphi_{\epsilon}(t))^{n}}{\Omega};\\
\varphi_{\epsilon}(0) =  \varphi_{0}.
  \end{array}
 \right.\\
\end{split}
\end{equation}

$$\,\,\,\,\,\,\,\,\, \theta_{\e,t}:=\eta+t\chi-\e \partial\bar{\partial}\log\log^{2}\|S\|_{\widehat{h}}^{2}; \hspace{12pt} \chi:=-\Ric(\Omega)+\Theta-i\partial\bar{\partial}\log\log^{2}\|S\|^{2}$$
\begin{rem} Note that we are using a different Hermitan metric in the last terms in $\theta_{\e,t}$ and $\chi$.  This is to ensure that $\omega_{0, \e}$, and thus the solution $\omega_{\e} (t)$ on on $M\times[0, T_{[\omega_0 ]})$ is independent of $\widetilde{T}$.
\end{rem}

Indeed, letting $$\varphi_{\e}(t) = \varphi_0+\int_0^t  \displaystyle \log\frac{\|S\|^{2}\log^2\|S\|^{2}(\omega_{\e}(t))^{n}}{\Omega}$$ and defining $\theta_{\e, t}$ as above we see that \eqref{pma-smooth-appr} is obviously satisfied.  On the other hand,  we have $\theta_{\epsilon,0}+i\ddbar\varphi_{\epsilon}(0)=\omega_{\e}(0)$ while on $M\times[0, T_{[\omega_0 ]})$ 
\begin{equation}
\begin{split}
(\theta_{\e, t}+i\ddbar  \varphi_{\e}(t))'=&-\Ric(\Omega)+\Theta-i\partial\bar{\partial}\log\log^{2}\|S\|^{2}\\
&+i\ddbar ( \log\|S\|^{2} + \log\log^2\|S\|^{2}+\log(\omega^n_{\e}(t))-\log \Omega)\\
=& i\ddbar \log(\omega^n_{\e}(t))\\
=&-Rc(\omega_{\e}(t))
\end{split}
\end{equation}
and it follows from \eqref{ckrf} that \eqref{999} holds on $M\times[0, T_{[\omega_0 ]})$.  Conversely, reversing the process above shows that the metric in \eqref{999} satisfies \eqref{ckrf} for any given solution to \eqref{pma-smooth-appr}.

 Now for all $\epsilon$ sufficiently small (depending on $\widetilde{T}$), we will derive estimates for $\varphi_{\epsilon}(t)$ on $M\times(0, \widetilde{T}]$ which may depend on $\widetilde{T}$ but will be independent of $\epsilon$, yielding corresponding $C_{loc}^{\infty}$ estimates for  $\omega_{\epsilon}(t)$.  These will allow us to let $\epsilon \to 0$ and $\widetilde{T}\to T_{[\omega_0 ]}$ and obtain a limit solution on $M\times[0, T_{[\omega_0 ]})$ satisfying the conditions in Theorem \ref{T-bdd data}.  

 Note by (2) in Assumption \ref{defsmoothcase}  and \eqref{CGmetric}, for $\epsilon>0$ sufficeintly small (depending on $\widetilde{T}$) we may have
\begin{equation}\label{t-bdde1}c_{1}(t+\e)\widehat{\omega}\leq\theta_{t,\e}\leq c_{2}\widehat{\omega},\end{equation}
  for some constants $c_1, c_2>0$ independent of $\e$ for all $t\in [0, \widetilde{T}]$.

We begin with the following $C^{0}$-estimates:
\begin{lem}\label{C0-smooth-appr}
On the time interval $(0,\widetilde{T}],$ we have $|\varphi_{\e}|\leq C,\;|\dot{\varphi_{\e}}|\leq\frac{C}{t},$ where $C$ is independent of $\e$ sufficiently small.
\end{lem}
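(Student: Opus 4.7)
My plan is to prove both estimates via parabolic maximum/minimum principle arguments, adapted to the non-compactness of $M$ by additive barriers $\delta\log\|S\|^2$ (diverging to $-\infty$ at $D$) when necessary, together with a direct spatial-minimum integration argument for the $C^0$ lower bound. The three key inputs are the bounds \eqref{t-bdde1} on $\theta_{\e,t}$, the Carlson-Griffiths property (3) of \S 2.1 (so $\widehat\omega^n\|S\|^2\log^2\|S\|^2/\Omega$ is comparable to $1$), and the initial positivity $\omega_0\geq c\eta$ from \eqref{lower}.

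For the $L^\infty$ upper bound on $\varphi_\e$, I would apply the maximum principle to $F=\varphi_\e-At+\delta\log\|S\|^2$ with $A$ large and $\delta>0$ small. At an interior max, $i\ddbar F\leq 0$ combined with $i\ddbar\log\|S\|^2=-\Theta$ gives $\omega_\e\leq\theta_{\e,t}+\delta\Theta\leq(c_2+O(\delta))\widehat\omega$; substituting into \eqref{pma-smooth-appr} and using property (3) yields $A\leq\dot\varphi_\e\leq C_*$ uniformly in $\e,\delta$. Choosing $A>C_*$ forces the max to $t=0$, and $\delta\to 0$ gives $\varphi_\e\leq\sup\varphi_0+A\widetilde T$. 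For the matching lower bound on $\varphi_\e$, I would instead evaluate $\dot\varphi_\e$ at the spatial minimum of $\varphi_\e(\cdot,t)$: there $i\ddbar\varphi_\e\geq 0$, so $\omega_\e\geq\theta_{\e,t}\geq c_1(t+\e)\widehat\omega$ by \eqref{t-bdde1}, whence $\dot\varphi_\e\geq n\log(t+\e)+O(1)$. Integrating in time and using that $\int_0^t n\log(s+\e)\,ds$ remains uniformly bounded as $\e\to 0$ (since $\e\log\e\to 0$), we conclude $\min_M\varphi_\e(\cdot,t)\geq\inf\varphi_0-C$ uniformly in $\e$.

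For $|\dot\varphi_\e|\leq C/t$, I would differentiate \eqref{pma-smooth-appr} to obtain $(\partial_t-\Delta_{\omega_\e})\dot\varphi_\e=\tr_{\omega_\e}\chi$, and then apply the maximum principle to the Yau-type quantity
\[
H = t\dot\varphi_\e - (\varphi_\e - \varphi_0) - (n+1)t + \delta\log\|S\|^2.
\]
Using $\Delta_{\omega_\e}\varphi_\e = n - \tr_{\omega_\e}\theta_{\e,t}$, the decomposition $\theta_{\e,t}=(1-\e)\eta+\e\widehat\omega+t\chi$ (coming from $-i\ddbar\log\log^2\|S\|^2_{\widehat h}=\widehat\omega-\eta$), and $i\ddbar\varphi_0=\omega_0-\eta$, a direct computation gives
\[
(\partial_t-\Delta_{\omega_\e})H = -1 - (c-\e)\tr_{\omega_\e}\eta - \e\,\tr_{\omega_\e}\widehat\omega + O(\delta) < 0
\]
for $\e<c/2$ and $\delta$ small (with $\Theta\leq C\widehat\omega$ absorbing the $\delta$-contribution). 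This rules out an interior max, so $H\leq 0$; combined with the $L^\infty$ bound on $\varphi_\e$, we obtain $\dot\varphi_\e\leq C/t$. A symmetric argument on the dual quantity $-t\dot\varphi_\e+A(\varphi_\e-\varphi_0)-Ct+\delta\log\|S\|^2$, using the upper bound on $i\ddbar\varphi_0=\omega_0-\eta$ (which is bounded because $\varphi_0\in C^\infty(\overline M)$), yields $\dot\varphi_\e\geq-C/t$.

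The main obstacle will be keeping all constants uniform in $\e\to 0$. The naive lower bound $\theta_{\e,t}\geq c_1(t+\e)\widehat\omega$ from \eqref{t-bdde1} degenerates as $t,\e\to 0$ and cannot be plugged directly into a maximum-principle argument at the lower-bound steps; the remedies are (i) the direct integration argument at the spatial min for the $C^0$ lower bound, which exploits the boundedness of $\int_0^t\log(s+\e)\,ds$ in the limit $\e\to 0$, and (ii) the insertion of the initial positivity $\omega_0\geq c\eta$ in the evolution of the Yau quantity $H$, which is what produces the uniform $(c-\e)\tr_{\omega_\e}\eta$ term. The $\delta$-barriers are a secondary device to handle the non-compactness of $M$, and their contributions all vanish as $\delta\to 0$.
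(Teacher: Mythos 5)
Your upper-bound arguments for $\varphi_\e$ and $\dot\varphi_\e$, as well as the lower bound on $\varphi_\e$, are sound and are close in spirit to the paper's (the paper invokes the Omori--Yau maximum principle or the maximum principle of Shi where you prefer $\delta\log\|S\|^2$ barriers; the evolution identity $(\partial_t-\Delta)(t\dot\varphi_\e-\varphi_\e-nt)=-\tr_{\omega_\e}\bigl(\eta-\e\,i\ddbar\log\log^2\|S\|^2_{\widehat h}\bigr)$ is exactly \eqref{e1.1}'s neighbour, and your insertion of $\varphi_0$ cleanly produces the extra $-\tr_{\omega_\e}(\omega_0-\eta)\leq -(c-1)\tr_{\omega_\e}\eta$ term so the $c$-positivity of $\omega_0$ enters directly).

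The genuine gap is the lower bound $\dot\varphi_\e\geq -C/t$. Your ``symmetric argument on the dual quantity'' $G=-t\dot\varphi_\e+A(\varphi_\e-\varphi_0)-Ct+\delta\log\|S\|^2$ does not work, because the sign structure is not symmetric under $t\dot\varphi_\e\mapsto -t\dot\varphi_\e$. Computing as before,
\[
(\partial_t-\Delta)G \;=\; (A-1)\dot\varphi_\e \;+\; \tr_{\omega_\e}\!\Bigl[(A-1)t\chi + A\e\widehat\omega - A\e\eta + A\omega_0 + \delta\Theta\Bigr]\; - An - C.
\]
Unlike the upper-bound case, the form inside the trace is now dominated by the \emph{positive} term $A\omega_0$, so $\tr_{\omega_\e}[\cdots]$ is bounded only from below, not above, and the $(A-1)\dot\varphi_\e$ term can be very negative exactly when the trace is very positive. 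Hence no choice of $A,C$ forces $(\partial_t-\Delta)G<0$ at a maximum, and the maximum principle gives nothing. This is not a cosmetic issue: bounding $\dot\varphi_\e$ from below means bounding $\omega_\e^n$ from below, and a clean $(\partial_t-\Delta)$-monotone quantity cannot do that by sign alone.

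The paper handles this with a qualitatively different quantity, $\psi=\dot\varphi_\e+A\varphi_\e-n\log t$ (note: no factor of $t$ in front of $\dot\varphi_\e$, and $-n\log t$ rather than a linear term). The key is the algebraic step \eqref{222}: writing $\chi+A\theta_{t,\e}=A\theta_{t+1/A,\e}$, bounding $\theta_{t+1/A,\e}\geq c\widehat\omega$ uniformly, and applying the arithmetic--geometric mean inequality $c\,\tr_{\omega_\e}\widehat\omega + A\log\tfrac{\omega_\e^n}{\widehat\omega^n}\geq \tfrac{c}{2}(\widehat\omega^n/\omega_\e^n)^{1/n}-CA$ yields $(\partial_t-\Delta)\psi\geq \tfrac{c}{2}(\widehat\omega^n/\omega_\e^n)^{1/n}-C/t$. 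At a spatial-time minimum of $\psi$ one then reads off $\omega_\e^n\geq c\bar t^{\,n}\widehat\omega^n$, hence $\dot\varphi_\e(\bar x,\bar t)\geq n\log\bar t-C$, and from the already-established $L^\infty$ bound on $\varphi_\e$ this propagates to $\dot\varphi_\e\geq C\log t-C$ on all of $M\times(0,\widetilde T]$, which implies (and is stronger than) $\dot\varphi_\e\geq -C/t$. You would need to replace your ``symmetric'' step with this AM--GM argument or an equivalent lower bound on $\omega_\e^n$.
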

\begin{proof}
Fix $\epsilon>0$ such that \eqref{t-bdde1} holds.    By the results in \S 2.1,  $\theta_{t,\e}$ has bounded curvature on $M=\overline{M}\setminus D,$ uniformly for all $t\in [0, \widetilde{T}]$.   Let $\psi:=\varphi_{\e} - Ct$ for some $C>0$ to be chosen.    Then since $\omega_{\e}(t)$ is a bounded curvature solution, $\psi$ is uniformly bounded on $M\times[0, \widetilde{T}]$.   We first suppose that $\psi$ attains a maximum value on $M\times[0,\widetilde{T}]$ at some point $(\overline{x},\overline{t})$.  If $\bar{t}>0$ then using \eqref{pma-smooth-appr} we have at $(\overline{x},\overline{t})$

\begin{equation}\label{e1.1}
\partial_{t}\psi_{\e}  \leq  \log\frac{\|S\|^{2}\log\|S\|^{2}\theta_{\bar{t}, \e}^{n}}{\Omega}-C<-1\\
\end{equation}
 by choosing $C$ sufficiently large independent of $\e$. Indeed, such a choice of $C$ exists from the upper bound in \eqref{t-bdde1} and property (3) in \S 2.1.  But \eqref{e1.1} contradicts the maximality of $\psi$ unless $\bar{t}=0$ in which case $\psi \leq \sup \psi(0)=\sup\varphi_{0} $ on $M\times [0, \widetilde{T}]$ and thus

 \begin{equation}\label{666}\varphi_{\e}\leq\sup\varphi_{0}+C\widetilde{T}\end{equation} on $M\times [0, \widetilde{T}]$.  Now in general, suppose $\psi$ does not attain a maximal value on $M\times [0, \widetilde{T}]$.
Since $\omega_{\e}(t)$ is a bounded curvature solution, we have $|\partial_{t}\psi_{\e}|$ is bounded on $M\times[0,\widetilde{T}]$, and by the Omori-Yau maximum principal we can find a sequence $x_{k}\in M$ and $\bar{t}\in[0, \widetilde{T}]$  such that $\psi_{\epsilon}(x_{k},\bar{t})\to\sup_{M\times[0,\widetilde{T}]}\psi_{\epsilon}$
and $i\partial\bar{\partial}\psi_{\epsilon}(x_{k},\bar{t})\leq\lambda_{k}\theta_{\bar{t}, \e}$
where $\lambda_{k}$ decreases to $0$ as $k\to\infty$.    Combining these with \eqref{pma-smooth-appr}, we may argue as above that for any $\delta$ we have $\partial_{t}\psi_{\e}(x_{k},\bar{t})  \leq -1$ for some $C$ independent of $\e$ and  all $k$ sufficeintly large.   On the other hand, $|\partial_{t}^{2}\psi_{\e}|(x_{k},\bar{t}) $ is uniformly bounded independent of $k$ using again that $\omega_{\e}(t)$ is a bounded curvature solution.  These last two facts contradict that $\psi_{\epsilon}(x_{k},\bar{t})\to\sup_{M\times[0,\widetilde{T}]}\psi_{\epsilon}$ unless $\bar{t}=0$ and we conclude again as above that \eqref{666} likewise holds on $M\times[0,\widetilde{T}]$ in this case as well.

Using again that $\omega_{\e}(t)$ is a bounded curvature solution, we also have $|\partial_{t}^{2}\psi_{\e}|$ is bounded on $M\times[0,\widetilde{T}]$. 

 By a similar argument, using \eqref{t-bdde1}  and \eqref{pma-smooth-appr} we may also get
\begin{equation}\label{777}\inf\varphi_{0}+C'\int_{0}^{t}\log(s+\e)ds\leq \varphi_{\e}\end{equation}
on $M\times [0, \widetilde{T}]$ for some constant $C' >0$ independent of $\e$.   We thus conclude the estimates for $|\varphi_{e}|$ in the Lemma hold.

 Next we will apply the methods in \cite{ST} to derive estimates for $\dot{\varphi_{\e}}$. We have
$$(\p_{t}-\Delta)\dot{\varphi_{e}}=tr_{\omega_{\e}}\chi$$ where the $\Delta$ denotes the Laplacian with respect to $\omega_{\e},$ and also $$(\p_{t}-\Delta)(t\dot{\varphi_{e}}-\varphi_{\e}-nt)=-tr_{\omega_{\e}}(\eta-\e\ddb\log\log^{2}\|S\|^{2})<0,$$ where
the last inequality comes from the fact that $\eta-\e\ddb\log\log^{2}\|S\|_{h}^{2}>0$ when $\e$ is small by  \eqref{CGmetric}.
Applying the maximum principle in \cite{Shi2}, we conclude  the supremum of $(t\dot{\varphi_{e}}-\varphi_{\e}-nt)$ on $M\times(0,\widetilde{T}]$, which is indeed finite, is attained when $t=0$ and thus $\dot{\varphi_{\e}}\leq\frac{\varphi_{\e}-\varphi_{0}}{t}+n$ on $M\times(0,\widetilde{T}]$.  On the other hand, for sufficiently large $A$ independent of $\e$ we have the following in which $c$ and $C$ refer to constants which may differ from line to line, may depend on $\widetilde{T}$ and $A$, but are independent of $\e$:
\begin{equation}\label{222}
\begin{split}
&(\p_{t}-\Delta)(\dot{\varphi_{e}}+A\varphi_{\e}-n\log t)\\
=&\;tr_{\omega_{\e}}(\chi+A\theta_{t,\e})+A\log\frac{\omega_{\e}^{n}\|S\|^{2}\log^{2}\|S\|^{2}}{\Omega}-(An+\frac{n}{t})\\
=&\; A tr_{\omega_{\e}}(\theta_{t+1/A,\e})+A\log\frac{\omega_{\e}^{n}\|S\|^{2}\log^{2}\|S\|^{2}}{\Omega}-(An+\frac{n}{t})\\
\geq&\; A tr_{\omega_{\e}}(c \widehat{\omega})+A\log\frac{\omega_{\e}^{n}}{\widehat{\omega}^{n}}-\frac{C}{t}\\
\geq&\; c tr_{\omega_{\e}}\widehat{\omega}+A\log\frac{\omega_{\e}^{n}}{\widehat{\omega}}-\frac{C}{t}\\
\geq&\;\frac{c}{2}\left(\frac{\widehat{\omega}^{n}}{\omega_{\e}^{n}}\right)^{1/n}-\frac{C}{t}\\.
\end{split}
\end{equation}
 where in the fourth line we have made use of \eqref{t-bdde1}, property (3) in \S2.1 and the equivalence of any two Carlson-Grifiths metrics on $M$, and in the sixth line we have again used \eqref{t-bdde1}.   Now let $\psi=(\dot{\varphi_{e}}+A\varphi_{\e}-n\log t)$,
and assume $\psi$ attains a minimum value on $M\times[0,\widetilde{T}]$ at some point $(\overline{x},\overline{t})$.  It follows that $\bar{t}>0$, and \eqref{222} then gives $\omega_{\e}^{n}(\overline{x},\overline{t}) \geq c\widehat{\omega}^{n}(\overline{x},\overline{t})\bar{t}^{n}$ for some $c>0$ independent of $\e$.  From this, \eqref{t-bdde1}, \eqref{pma-smooth-appr} and property (3) in \S 2.1 we may then have $\psi \geq C\log t-C$ and thus $\dot{\varphi_{e}}\geq C\log t-C$ on $M\times(0,\widetilde{T}]$ for some $C>0$ where we have used \eqref{666} and \eqref{777}.  In general, $\psi$ is only bounded but may not attain a minumum value on $M\times[0,\widetilde{T}]$, though we may argue as before, applying the above estimate along an appropriate space-time sequence obtained by the Omori-Yau maximum principle, to conclude $\dot{\varphi_{e}}\geq C\log t-C$ on $M\times(0,\widetilde{T}]$ for some $C>0$ in this case as well.  We thus conclude the estimates for $|\dot{\varphi_{e}}|$ in the Lemma hold.
\end{proof}

Next we want to derive a Laplacian estimate for $\varphi_{e}.$
\begin{lem}\label{C2-smmoth-appr} for each $t\in (0,\widetilde{T}]$ we have
$C_{1}(t)\widehat{\omega}\leq\omega_{\e}(t)\leq C_{2}(t)\widehat{\omega},$ for  constants $C_{1}(t),C_{2}(t)>0$ independent of $\epsilon$ sufficiently small.
\end{lem}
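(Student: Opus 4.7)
The plan is to derive both bounds via a parabolic Aubin-Yau / Chern-Lu type maximum principle applied to a weighted trace quantity, combined with the volume-form estimate implicit in Lemma \ref{C0-smooth-appr}.

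For the upper bound, I would work with the auxiliary function
\[
H(x,t) := t\log \tr_{\widehat{\omega}} \omega_\e(x,t) - A\varphi_\e(x,t),
\]
for a large constant $A>0$ to be chosen independently of $\e$. Since $\widehat{\omega}$ has bounded geometry of infinite order (property (1) of \S 2.1), its bisectional curvature is bounded above by some $\hat K$, and the standard Chern-Lu inequality yields
\[
(\p_t - \Delta_{\omega_\e}) \log \tr_{\widehat{\omega}}\omega_\e \leq \hat K\,\tr_{\omega_\e}\widehat{\omega}.
\]
Combining this with $\Delta_{\omega_\e}\varphi_\e = n - \tr_{\omega_\e}\theta_{t,\e}$ and the crucial lower bound $\theta_{t,\e} \geq c_1(t+\e)\widehat{\omega}$ from \eqref{t-bdde1}, the $\e$-independent choice $A = 2\hat K/c_1$ leads to
\[
(\p_t - \Delta_{\omega_\e}) H \leq \log\tr_{\widehat{\omega}}\omega_\e - \hat K\, t\, \tr_{\omega_\e}\widehat{\omega} - A\dot{\varphi_\e} + An.
\]

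Because $\omega_\e(t)$ is a bounded-curvature solution, I can locate an almost-maximum of $H$ on $M\times[0,\widetilde T]$ via the Omori-Yau maximum principle, arguing as in Lemma \ref{C0-smooth-appr}. The initial value $H(\cdot,0)=-A\varphi_0$ is bounded uniformly in $\e$. If the supremum is attained at some $(x_0,t_0)$ with $t_0>0$, then the infinitesimal inequalities there, together with $|\dot{\varphi_\e}|\leq C/t$ from Lemma \ref{C0-smooth-appr}, give
\[
\hat K\, t_0\, \tr_{\omega_\e}\widehat\omega(x_0,t_0) \leq \log\tr_{\widehat\omega}\omega_\e(x_0,t_0) + AC/t_0 + An.
\]
Using the elementary inequality $\tr_{\widehat\omega}\omega_\e \leq n(\tr_{\omega_\e}\widehat\omega)^{n-1}(\omega_\e^n/\widehat\omega^n)$, property (3) of \S 2.1, and the Monge-Amp\`ere equation \eqref{pma-smooth-appr} (which relates $\omega_\e^n/\widehat\omega^n$ to $e^{\dot{\varphi_\e}}$ times a bounded factor), the bound $\dot{\varphi_\e}\leq C/t$ controls $\omega_\e^n/\widehat\omega^n$ by a constant depending on $t_0$ alone; the resulting implicit inequality on $u := \tr_{\widehat\omega}\omega_\e(x_0,t_0)$ is mild enough (the $\log u$ term grows slowly in $u$) to ensure that $t_0\log u$ remains bounded as $t_0\to 0^+$. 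Transferring this back via $H(x,t)\leq H(x_0,t_0)$ and invoking the $L^\infty$ bound on $\varphi_\e$ then yields $\omega_\e(t)\leq C_2(t)\widehat\omega$ with $C_2(t)$ independent of $\e$.

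For the lower bound, I would simply combine the upper bound with a lower volume-form bound: $\dot{\varphi_\e}\geq -C/t$ together with property (3) of \S 2.1 and \eqref{pma-smooth-appr} gives $\omega_\e^n/\widehat\omega^n \geq c(t)>0$. Writing $\lambda_1\leq\dots\leq\lambda_n$ for the eigenvalues of $\omega_\e$ with respect to $\widehat\omega$, the upper bound forces $\lambda_n \leq C_2(t)$, and combining with $\prod\lambda_i\geq c(t)$ gives $\lambda_1 \geq c(t)/C_2(t)^{n-1} =: C_1(t)$, hence $\omega_\e\geq C_1(t)\widehat\omega$.

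The principal obstacle is the degeneracy of the reference form $\theta_{t,\e}$ at the initial time, as captured by \eqref{t-bdde1}; this is precisely what forces the $t$-weighting in $H$, the specific choice of $A$, and the deterioration of the final constants $C_1(t), C_2(t)$ as $t\to 0$. A secondary subtlety is the non-compactness of $M$, handled via the Omori-Yau maximum principle applied to the complete bounded-curvature metric $\omega_\e(t)$ exactly as in Lemma \ref{C0-smooth-appr}.
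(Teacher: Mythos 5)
Your proposal is essentially correct but takes the dual route to the paper's. The paper applies the parabolic Chern--Lu (Schwarz lemma) inequality to $\tr_{\omega_\e}\widehat\omega$ via the auxiliary function $t\log\tr_{\omega_\e}\widehat\omega - A\varphi_\e$, which at a maximum directly yields $\tr_{\omega_\e}\widehat\omega \leq e^{C/t}$, i.e.\ the \emph{lower} bound $\omega_\e \geq e^{-C/t}\widehat\omega$; the \emph{upper} bound then falls out of the volume-form bound $\omega_\e^n/\widehat\omega^n \leq C_t$ (from $\dot\varphi_\e \leq C/t$ and \eqref{pma-smooth-appr}) together with the eigenvalue argument. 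You instead estimate $\tr_{\widehat\omega}\omega_\e$, getting the upper bound first and closing the lower bound with the dual volume-form bound $\omega_\e^n/\widehat\omega^n \geq c_t$ from $\dot\varphi_\e \geq -C/t$. Two points of imprecision are worth flagging. First, the inequality $(\p_t-\Delta_{\omega_\e})\log\tr_{\widehat\omega}\omega_\e \leq \hat K\,\tr_{\omega_\e}\widehat\omega$ is \emph{not} the Chern--Lu/Schwarz lemma --- that inequality controls $\tr_{\omega_\e}\widehat\omega$ and needs only an \emph{upper} bound on the bisectional curvature of $\widehat\omega$. What you are using is the parabolic Aubin--Yau second-order estimate (as in Sherman--Weinkove), whose proof requires a \emph{lower} bound on the bisectional curvature of $\widehat\omega$ (not an upper bound, as you state); the inequality also carries an extra bounded additive term coming from $\hat\Delta\log(\|S\|^2\log^2\|S\|^2/\Omega)$, which is harmless here since that quantity is bounded relative to $\widehat\omega$. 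Because $\widehat\omega$ has bounded geometry of infinite order, both curvature bounds hold and your argument survives, but the paper's choice of trace needs less. Second, your closing at the maximum is slightly more involved: the maximum-principle inequality naturally delivers control on $\tr_{\omega_\e}\widehat\omega$ (the quantity appearing on the right of either trace inequality), which in the paper's formulation is directly the logarithmic term in the auxiliary function, while in yours an extra conversion through the elementary inequality $\tr_{\widehat\omega}\omega_\e \leq C(\tr_{\omega_\e}\widehat\omega)^{n-1}\,\omega_\e^n/\widehat\omega^n$ and the volume-form bound is needed. This works, as you indicate, but it is cleaner to set the auxiliary function up the way the paper does. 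In all other respects --- the $t$-weighting of the auxiliary function, the $\e$-independent choice of $A$ justified by \eqref{t-bdde1}, the use of Omori--Yau as in Lemma~\ref{C0-smooth-appr}, and the final eigenvalue argument --- your proposal matches the paper's argument.
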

\begin{proof}
First recall that $\widehat{\omega}$ is complete and has uniformly bounded bisectional curvature. Then the parabolic version
of the Chern-Lu inequality (see \cite{ST0}) gives:$$(\p_{t}-\Delta)\log\;tr_{\omega_{\e}}\widehat{\omega}\leq C tr_{\omega_{\e}}\widehat{\omega}+C$$ where the constant $C$ depends on the
upper bound of the bisectional curvature of $\widehat{\omega}.$ For the rest of the proof, $C$ will denote a constant, which may change from line to line, and which is independent of $\e$.   Now by \eqref{t-bdde1} we may choose $A$  sufficiently large independent of $\e$ so that
\begin{equation}\label{333}
\begin{split}
&(\p_{t}-\Delta)(t\log\;tr_{\omega_{\e}}\widehat{\omega}-A\varphi_{e})\\
\leq&\;tr_{\omega_{\e}}(Ct\widehat{\omega}-A\theta_{t,\e})+\log\;tr_{\omega_{\e}}\widehat{\omega}-A\log\frac{\omega_{\e}^{n}\|S\|^{2}\log^{2}\|S\|^{2}}{\Omega}
+(An+Ct)\\ \leq&\;-\frac{A}{2}tr_{\omega_{\e}}\theta_{t,\e}+\log\;tr_{\omega_{\e}}\theta_{t,\e}+A\log\frac{\theta_{t,\e}^{n}}{\omega_{\e}^{n}}-CA\log(t+\e)+C\\
\leq&\;-\frac{A}{4}t
r_{\omega_{\e}}\theta_{t,\e}-CA\log(t+\e)+C,
\end{split}
\end{equation}
Now suppose $t\log\;tr_{\omega_{\e}}\widehat{\omega}-A\varphi_{e}$ attains a maximum value on $M\times[0,\widetilde{T}]$ at some point $(\overline{x},\overline{t})$.  Then if $\overline{t}>0$, using \eqref{333} we have at $(\overline{x},\overline{t})$ that
$tr_{\omega_{\e}}\theta_{t,\e}\leq-C\log t+C$ and so $$t\log\;tr_{\omega_{\e}}\widehat{\omega}-A\varphi_{e}\leq Ct(\log\log\frac{1}{t}+\log\frac{1}{t+\e})+C\leq C.$$  In this case it follows that \begin{equation}\label{444}tr_{\omega_{\e}}\widehat{\omega}\leq e^{\frac{C}{t}}\end{equation} on $M\times[0,\widetilde{T}]$ where we have used estimate for $|\varphi_{\e}|$ Lemma \ref{C0-smooth-appr}.  On the other hand, if $\overline{t}=0$ we also clearly have \eqref{444}.  In general, In general, $t\log\;tr_{\omega_{\e}}\widehat{\omega}-A\varphi_{e}$ is only bounded but may not attain a maximum value on $M\times[0,\widetilde{T}]$, though we may argue as in the proof of Lemma \ref{C0-smooth-appr}
and apply the above estimates along an appropriate sequence in space-time to likewise conclude that \eqref{444} holds on $M\times[0,\widetilde{T}]$ in this case as well.

Finally, for any $t\in (0,\widetilde{T}]$,
\eqref{pma-smooth-appr} gives
\begin{equation}\label{555}\frac{\omega_{\e}^{n}}{\widehat{\omega}^{n}}=e^{\dot{\varphi_{\e}}}\frac{\Omega}{\widehat{\omega}^{n}\|S\|^{2}\log^{2}\|S\|^{2}}\leq C_t\end{equation} on $M$ for some constant $C_t>0$ where we have used the estimate for $|\dot{\varphi_{\e}}|$ in Lemma \ref{C0-smooth-appr} and (3) in \S 2.1.  The Lemma follows from \eqref{555} and \eqref{444}.

\end{proof}

\begin{proof}[Completion proof of Theorem \ref{T-bdd data}  when $\varphi_0\in C^{\infty} (\overline{M})$]
The previous two lemmas and the Evans-Krylov theory applied to \eqref{pma-smooth-appr} imply that for any $K\subset \subset M$ and $s\in(0, \widetilde{T})$ we have $$\max_K \| \nabla_{\eta}^k \varphi_{\e}(t) \|_{\eta} \leq C_{k,s, K,t}$$ independent of $\e$ (sufficiently small depending on $\widetilde{T}$) and $t\in(s, \widetilde{T}]$ where the norm and covariant derivative here are with respect to $\eta$.  Thus for some subsequence  $\e_{i} \to 0$, $\varphi_{\e_{i}}$ will converge locally uniformly to a smooth solution $\varphi$ to \eqref{pma0} on $M\times(0, \widetilde{T})$ which is bounded on $M$ for each $t$.   Thus $\omega_{\epsilon_i}(t)$ converges locally uniformly to a smooth solution $\omega(t)$ to the flow in equation \eqref{ckrf} on $M\times(0, \widetilde{T})$ and as $\widetilde{T}<T_{[\omega_0 ]}$ was arbitrary, we may in fact assume the convergence to a solution on $M\times(0, T_{[\omega_0 ]})$ satisfying the estimates in Lemma \ref{C2-smmoth-appr}.

We now show the limit solution $\omega(t)$ in fact converges smoothly uniformly on compact subsets of $M$ to $\omega_0$ as $t\to 0$, and thus can be extended to a smooth solution to \eqref{ckrf} on  $M\times[0, T_{[\omega_0 ]})$ with initial data $\omega_0$.  This basically follows from applying Theorem 4.1 in \cite{Chau-Li-Tam} to the sequence $\omega_{\epsilon_i}(t)$, and observing that the completeness of the background metric $\hat{g}$ in that Theorem is in fact not necessary in our case.  We describe this in more detail as follows.  Consider the family of solutions  $\omega_{\epsilon_i}(t)$ on $M\times[0, T_{[\omega_0 ]})$.   For each $i$ we have $\omega_{\epsilon_i}(0) \geq c\eta$ on $M$ for some $c>0$ independent of $i$ and we conclude from the proof of Lemma 3.1 of \cite{Chau-Li-Tam} that \begin{equation}\label{1}\omega_{\epsilon_i}(t) \geq c\eta\end{equation} on $M\times[0, T]$ for some $c, T>0$ independent of $i$.  For this simpy observe that in Lemma 3.1 of \cite{Chau-Li-Tam}, the completeness of $\hat{g}$ is never actually used in the proof.   Next we choose any smooth non-negative function $\psi:[0, \infty)\to \R$ which is identically zero in some neighborhood of $0$ and let $\varphi(t):=\psi(\|S\|^2)$.  Then using \eqref{1}, as in the  proof of Lemma 3.3 of \cite{Chau-Li-Tam} we may have $\|\nabla_{\epsilon_i, t} \varphi(t)\|, \|\Delta_{\epsilon_i, t} \varphi(t)\|\leq C$ on $M$ where the norms here are relative to $\omega_{\epsilon_i}(t)$ and $C$ is independent of $i$ and $t\in [0, T)$, and by the same proof there we may conclude that $\varphi(t)R_{\epsilon_i} (t)\geq C$ on $M\times[0, T]$ where $R_{\epsilon}(t)$ is the scalar curvature of $\omega_{\epsilon_i}(t)$ and the constant $C$ is independent of $i$.  From this and the fact that $\phi$ was arbitrarily chosen, we can conclude as in  Lemma 3.3  of \cite{Chau-Li-Tam}  that for any compact $K\subset \subset M$ we have the upper bound
\begin{equation}\label{2}\omega_{\epsilon_i}(t) \leq c\eta\end{equation} on $K\times[0, T]$ for some $c$ independent of $i$.   The smooth convergence of  $\omega_{\epsilon_i}(t)$  on compact subsets of $M\times[0, T]$ then follows from \eqref{1}, \eqref{2} and the Evans-Krylov theory.  Thus the limit solution $\omega(t)$ extends smoothly to $M\times[0, T_{[\omega_0 ]})$ as claimed above.

To complete the proof of the Theorem in this case, it remains only to prove the uniqueness statement which we do in the next sub-section in Proposition \ref{pma-smooth-unique}.
\end{proof}

\subsection{Proof of  Theorem \ref{T-bdd data} when \\$\varphi_0\in L^{\infty}(M)\bigcap C^{\infty} (M) \bigcap Psh(\overline{M}, \eta)$}

By \eqref{lower} we can choose some $\e>0$ so that in fact we have $\varphi_0\in Psh(\overline{M}, (1-\e)\eta)$.  Then, using Theorem \ref{BK} we may choose a sequence $\{\varphi_{j}\} \subset C^{\infty} (\overline{M})\bigcap  Psh(\overline{M}, (1-\e)\eta)$ so that $\varphi_{j}\downarrow\varphi_{0}$ pointwise on $\overline{M}$ and locally smoothly on $M$.  In particular, it follows that
\begin{enumerate}
\item $|\varphi_{j}|\leq C$ for all $j$ and some $C$
\item $\omega_{j}=\eta+i\ddbar \varphi_j\geq \e\eta$  for all $j$ on $\overline{M}$.
\end{enumerate}

Now for each $j$ we let $\omega_j(t)$ be the solution to \eqref{ckrf} on $M\times[0, T_{[\omega_0 ]})$ with initial data $\omega_j(0)= \eta +i\ddbar \varphi_j$ constructed in \S 3.1.  We may then apply the estimates in \S 3.1 to each solution $\omega_j(t)$, and observe that the estimates are in fact independent $j$, thus providing a subsequential limit solution $\omega(t)$ satisfying the conclusions of Theorem \ref{T-bdd data}.  We explain this in more detail as follows.

Under Assumption 1,  let $\varphi_j(t)$ be the corresponding solution to \eqref{pma0} on $M\times[0,T_{[\omega_0 ]})$ also as in \S 3.1.  From Lemma \ref{C0-smooth-appr} and (1) it follows that $|\varphi_{j}|(t)$ and $|\dot{\varphi_{j}}(t)|$ are uniformly bounded on compact subsets of $M\times (0, \widetilde{T})$ independently of $j$ where $\widetilde{T}$ is from Assumption 1.   From Lemma \ref{C2-smmoth-appr} it further follows that $\omega_j(t)$ is uniformly equivalent to $\widehat{\omega}$ on $M$, independent of $j$, on compact intervals of $(0,\widetilde{T})$.   As $\widetilde{T}<T_{[\omega_0 ]}$ was arbitrary and by arguing as in \S 3.1 separately for each $j$, we may conclude smooth local  estimates for $\omega_j(t)$ on compact subsets of $M\times[0, T_{[\omega_0 ]})$ which are independent of $j$, and that some subsequence of $\omega_j(t)$ converges to a solution $\omega(t)$ to \eqref{ckrf} on $M\times[0, T_{[\omega_0 ]})$ satisfying \eqref{completeness} for all $t\in (0, T_{[\omega_0 ]})$.

The proof of Theorem \ref{T-bdd data} will be complete once we prove uniqueness of such a solution which we do in the proposition below.  First, we note that by \eqref{lower} each $\omega_j(t)$ will satisfy the lower bound in \eqref{1} for constants $c, T>0$ independent of $j$ and thus the limit solution $\omega(t)$ likewise satisfies the lower bound in \eqref{1}.  We will use this fact in the following proof.

\begin{prop}\label{pma-smooth-unique}
Let $\varphi_{1}(t),\varphi_{2}(t)$ be two solutions to \eqref{pma0} on $M\times[0,T_{[\omega_0 ]})$ with initial data $\varphi_{1}(0)=\varphi_{2}(0)\in L^{\infty}(M)\bigcap C^{\infty} (M)$.  Suppose   $|\varphi_{1}(t)|,|\varphi_{2}(t)|$ are both bounded on $M\times[0,T)$ for every $T<  T_{[\omega_0 ]}$.  Then $\varphi_{1}(t)=\varphi_{2}(t)$ on $M\times[0,T]$.
\end{prop}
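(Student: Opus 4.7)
The plan is to set $w := \varphi_{1}-\varphi_{2}$ and prove $w\le 0$ on $M\times[0,T]$ for every $T<T_{[\omega_{0}]}$; the reverse inequality $w\ge 0$ follows by interchanging the roles of $\varphi_{1}$ and $\varphi_{2}$, and together they give $w\equiv 0$. Since $\varphi_{1}(0)=\varphi_{2}(0)$ we have $w(\cdot,0)=0$, and by hypothesis $w$ is uniformly bounded on $M\times[0,T]$. Subtracting the two instances of \eqref{pma0} cancels the $\theta_{t}$, $\|S\|^{2}\log^{2}\|S\|^{2}$, and $\Omega$ factors, yielding
\begin{equation*}
\partial_{t}w=\log\det\bigl(\omega_{2}^{-1}\omega_{1}\bigr)=\log\det\bigl(I+\omega_{2}^{-1}\cdot i\partial\bar\partial w\bigr),
\end{equation*}
and concavity of $\log\det$ on the positive cone then delivers the subsolution inequality $\partial_{t}w\le \operatorname{tr}_{\omega_{2}}(i\partial\bar\partial w)$.

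To handle the noncompactness of $M$, I perturb $w$ by a barrier diverging at $D$. A convenient choice is $B:=-\log\|S\|_{h}^{2}$, which is smooth and positive on $M$, tends to $+\infty$ at $D$, and has $i\partial\bar\partial B=\Theta_{h}$, a smooth globally bounded $(1,1)$-form on $\overline{M}$. For parameters $\epsilon,K>0$ to be chosen, I consider
\begin{equation*}
\Psi_{\epsilon}(x,t):=w(x,t)-\epsilon B(x)-K\epsilon t.
\end{equation*}
Since $\Psi_{\epsilon}\to -\infty$ as $\|S\|\to 0$ while $w$ is bounded, $\sup_{M\times[0,T]}\Psi_{\epsilon}$ is either attained at some $(x_{0},t_{0})$ with $x_{0}$ in a compact subset of $M$, or approached along a space-time sequence handled by the Omori-Yau maximum principle on the complete manifold $(M,\widehat{\omega})$ exactly as in the proof of Lemma \ref{C0-smooth-appr}. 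If $t_{0}=0$, then $\Psi_{\epsilon}(\cdot,0)=-\epsilon B\le 0$, so $\sup\Psi_{\epsilon}\le 0$ and $w\le \epsilon B+K\epsilon t$ on $M\times[0,T]$; letting $\epsilon\to 0$ pointwise yields $w\le 0$. If instead $t_{0}>0$, the maximum principle at $(x_{0},t_{0})$ gives $\partial_{t}w\ge K\epsilon$ and $i\partial\bar\partial w\le \epsilon \Theta_{h}$, which combined with the subsolution inequality yield
\begin{equation*}
K\epsilon\le \partial_{t}w\le \operatorname{tr}_{\omega_{2}}(i\partial\bar\partial w)\le \epsilon \operatorname{tr}_{\omega_{2}}(\Theta_{h}),
\end{equation*}
so $K\le \operatorname{tr}_{\omega_{2}}(\Theta_{h})$ at the maximum point.

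The main obstacle is therefore to produce a uniform upper bound on $\operatorname{tr}_{\omega_{2}}(\Theta_{h})$ valid along any such maximum sequence: since $\Theta_{h}$ is smooth on $\overline{M}$ with $\Theta_{h}\le C_{\Theta}\eta$, this reduces to a lower bound of the form $\omega_{2}(t)\ge c(t)\eta$ on compact subintervals of $(0,T]$, after which $K$ may be chosen larger than $n C_{\Theta}/c(t)$ to rule out the case $t_{0}>0$ and conclude the proof. I plan to obtain this lower bound by adapting the Chern-Lu style Laplacian estimate in Lemma \ref{C2-smmoth-appr} directly to $\varphi_{2}$: the assumed boundedness of $\varphi_{2}$, together with the $\|S\|^{2}\log^{2}\|S\|^{2}$ factor built into \eqref{pma0} and property (3) in \S 2.1, supplies precisely the terms needed in the evolution computation for $\log\operatorname{tr}_{\omega_{2}}\widehat{\omega}-A\varphi_{2}$ to produce the two-sided comparability $c(t)\widehat{\omega}\le \omega_{2}(t)\le C(t)\widehat{\omega}$ on $(0,T]$. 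This upgrade of Lemma \ref{C2-smmoth-appr} to an arbitrary bounded solution (not merely one arising as a limit of the approximations in \S 3.1) is the technical heart of the argument; once it is in hand, the barrier-plus-maximum-principle argument above closes routinely.
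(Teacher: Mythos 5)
Your overall strategy---perturb the difference by a barrier proportional to $\log\|S\|^{2}$ that diverges at $D$, forcing an interior spatial extremum, and then apply a parabolic maximum principle---is the same as the paper's. However, there is a genuine gap, and you have identified it yourself without resolving it: your argument needs a lower bound of the form $\omega_{2}(t)\geq c(t)\eta$ (equivalently, an upper bound on $\operatorname{tr}_{\omega_{2}}\Theta_{h}$) for the \emph{arbitrary} bounded solution $\varphi_{2}$. This is not a routine adaptation of Lemma \ref{C2-smmoth-appr}. That lemma is proved only for the approximating flows $\omega_{\epsilon}(t)$, which are known in advance (via Lemma \ref{LZ1.1}) to be complete with uniformly bounded curvature; both the Omori--Yau step and the control of quantities such as $|\partial_{t}^{2}\psi_{\epsilon}|$ in Lemma \ref{C0-smooth-appr} rely on that. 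An arbitrary bounded scalar solution of \eqref{pma0} need not yield a metric $\omega_{2}(t)$ that is complete or of bounded curvature on the noncompact $M$, so the Chern--Lu maximum-principle argument does not automatically transfer. Calling this the ``technical heart'' and asserting that it ``closes routinely'' papers over the real difficulty.

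The paper avoids this obstruction by a choice of normal form combined with a determinant-monotonicity (rather than concavity/trace) comparison. It takes $\omega_{1}$ WLOG to be the solution already constructed in \S3, for which the needed bound $\omega_{1}(t)\geq c\,\eta$ (from \eqref{1}, and from Lemma \ref{C2-smmoth-appr} for later times) is already available. It then sets $\psi=\varphi_{2}-\varphi_{1}+a\log\|S\|^{2}-C_{a}t$; at a spatial maximum with $\bar t>0$ one has $i\partial\bar\partial\psi\leq0$, hence $\omega_{2}\leq\omega_{1}+a\Theta$, and by \emph{monotonicity} of $\log\det$ on positive forms
\[
\partial_{t}\psi
=\log\frac{\omega_{2}^{\,n}}{\omega_{1}^{\,n}}-C_{a}
\leq\log\frac{(\omega_{1}+a\Theta)^{n}}{\omega_{1}^{\,n}}-C_{a}<0,
\]
where the final strict inequality follows from $\omega_{1}\geq c\,\eta$ and the choice of $C_{a}$, contradicting maximality. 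No trace against the unknown $\omega_{2}$ appears anywhere. The reverse inequality uses $-a\log\|S\|^{2}$ and a minimum, again only consuming the estimates on $\omega_{1}$. Your concavity step $\partial_{t}w\leq\operatorname{tr}_{\omega_{2}}(i\partial\bar\partial w)$ is what forces the unknown metric $\omega_{2}$ into the role of reference and thereby imports the missing a priori estimate; replacing it with the determinant comparison above, and taking the constructed solution as the one that absorbs the barrier's $i\partial\bar\partial$-term, removes the gap.
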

\begin{proof}

 We assume without loss of generality that the solutions $\omega_1(t)$ and $\varphi_1(t)$ are as constructed as in the proof of Theorem \ref{T-bdd data} so far.  Now for any $T<T_{[\omega_0 ]}$ we prove that $\varphi_2\leq \varphi_1$ on $M\times[0,T]$.  Let $|\cdot|^2$ be a Hermitian metric such that $\|S\|^2<1$ and let $\Theta$ denotes its curvature form.  As noted above, $\omega_1(t)$ satisfies the inequality in \eqref{1} on $M\times[0,\e]$ for some constant $c, \e >0$.  Thus for all $a>0$ we can find $C_a\to 0$ as $a\to 0$ such that $\log\frac{(\omega_1(t)+a\Theta)^n}{\omega_1(t)^n}< C_a$ on $M\times [0,T]$.  Consider $\psi=\varphi_2-\varphi_1+a\log\|S\|^2-C_at$, since $\varphi_2-\varphi_1$ is a bounded function, $\psi$ attains a maximum on $M\times[0,T]$ at some point $(\bar{x},\bar{t})$. If $\bar{t}>0$, then at $(\bar{x},\bar{t})$ we have
\begin{align*}
0&<\partial_t \psi=\log \frac{(\omega_1(t)+a\Theta+i\ddbar\psi)^n}{\omega_1(t)^n}-C_a<0,
\end{align*}
a contradiction. Thus since $\psi(x,0)<0$ we have $\varphi_2\leq\varphi_1-a\log\|S\|^2+C_at$, and letting $a\to 0$ we get $\varphi_2\leq \varphi_1$.

To prove $\varphi_2\geq \varphi_1$ we argue similarly.  Namely, we first choose $C_a \to 0$ as $a\to 0$ so that  $\log(\omega_1-a\Theta)^n/\omega_1^n \geq C_a$.  Then we let $\psi=\varphi_2-\varphi_1-a\log\|S\|^2-C_1t$ and argue as before using the maximum principle that $\psi\geq \min_M \psi(0)$ everywhere then conclude by letting $a \to 0$ that $\varphi_2 \geq \varphi_1$ on $M$.

%Note that $\omega_{1}=\theta_{t}+\ddb\varphi_{1}$ and $\omega_{2}=\theta_{t}+\ddb\varphi_{2}$ are both Carlson-Griffith forms when $t>0$. we plug $\varphi_{1},\varphi_{2}$ into the equation \eqref{pma0} and denote $\psi:=\varphi_{1}-\varphi_{2},$ it follows that
%$$\dot{\psi}=\log\frac{(\omega_{2}+\ddb\psi)^{n}}{\omega_{2}^{n}}$$ with $\psi(0)=0.$ We can still replace $\omega_{2}$ by $\omega_{2}-\e\log\log^{2}\|S\|_{h}^{2}$ as before. Then by the maximal principle we can show that $\psi_{\e}=0$ for each $\e>0,$ which follows that $\psi=0.$
\end{proof}

\section{Proof of Theorem \ref{T1}}

The proof here roughly the same steps as the proof in Theorem \ref{T-bdd data}.   Namely, we construct a suitable approximating family for $\omega_0$, then consider the corresponding family of approximate solutions to \eqref{ckrf} and convergence to a limit solution is proved using the parabolic Monge \A equation.  One major difference here is that $\omega_0$ is complete on $M$ and we want to preserve this property in our approximation.  Another major difference is that $\phi_0$ is no longer bounded on $M$ in general which again makes the estimates more difficult.

 We make the following technical assumptions which we will use throughout the rest of the section.

\begin{ass}\label{ass2}
Let $\eta, \widehat{\omega}, \phi_0,$ be as in Theorem \ref{T1}, and assume $\|S\|_{\widehat{h}}^{2}<1$ on $\overline{M}$ where $S, \widehat{h}$ are used to define $\widehat{\omega}$.

 Fix some $\widetilde{T}<T_{[\omega_{0}]}$, then choose some $\tilde{c}, \widetilde{\beta}>0$ and a smooth volume form $\widetilde{\Omega}$ on $\overline{M}$ so that for $\widetilde{h}:=\tilde{c}\widehat{h}$: 

\begin{enumerate}
\item  on $M\times [0, \widetilde{T}]$ we have
$$\eta+t(-\Ric(\widetilde{\Omega})+\Theta_{\widetilde{h}}+\frac{2\Theta_{\widetilde{h}}}{\log\|S\|_{\widetilde{h}}^{2}})>0$$
\item on $\overline{M}$ we have
$$(1-c_{\widetilde{\beta}})\eta\leq\eta-\widetilde{\beta}\Theta_{{\widetilde{h}}} \leq(1+c_{\widetilde{\beta}})\eta$$
for some $c_{\widetilde{\beta}}<\frac{1}{2}$ with $T<(1-c_{\widetilde{\beta}})\widetilde{T}$.
\vspace{10pt}
\item $\log\log^{2}\|S\|_{\widetilde{h}}^{2}>1$ and $\eta+i\ddbar \log\log^2\|S\|_{\widetilde{h}}^2>0$  on $M$.
\end{enumerate}

\vspace{10pt}
As before we will abbreviate $\|S\|_{\widetilde{h}}^{2}, \Theta_{\widetilde{h}}$ and $\widetilde{\Omega}$ simply by $\|S\|^{2}, \Theta$ and $\Omega$.

\end{ass}
 As in the case of Assumption 1, we can find a smooth volume form $\widetilde{\Omega}$, then $\tilde{c}$ sufficiently small so that the inequality in (1) holds for $t=\widetilde{T}$ in which case it must also hold for all $t\in  [0, \widetilde{T}]$ by interpolation.   A choice of $\widetilde{\beta}$ in (2) is justified  by the smoothness of $\Theta$ on $\overline{M}$ and by choosing $\tilde{c}$ smaller still if necessary.  Finally, by scaling $\tilde{c}$ smaller again we may assume the inequality in (3) holds and that $\eta+i\ddbar \log\log^2\|S\|_{\widetilde{h}}^2$ is also a Carlson-Griffiths metric as in \S 2.1.

\subsection{Approximate solutions $\omega_{\alpha, j}(t)$}

Recall in Theorem \ref{T1} we have $\varphi_0\in C^{\infty}(M)\bigcap Psh(\overline{M}, \eta)$ with zero Lelong number such that \begin{equation}\label{splitoff0}\omega_0=\eta+i\ddbar\varphi_0\geq 2\delta\widehat{\omega}\end{equation} on $M$ for some $\delta>0$.  We begin by construct a two parameter family $\varphi_{\alpha,j}$ approximating $\varphi_0$ as $\alpha\to 0$ and $j\to \infty$ so that the metrics $\omega_{\alpha,j}(0)=\eta+i\ddbar\varphi_{\alpha,j}$ are likewise  bounded below for some fixed Carlson-Griffiths metric for all $\alpha$.  This uniform lower bound will be key for our later proofs, and this is one reason for our two parametrer construction as opposed to a single parametrer approximation as in Theorem \ref{BK}.

\begin{lem}\label{approxsequence} There exists $\hat{\alpha}$ such that for all $0<\alpha \leq \hat{\alpha}$ there exists a sequence $\varphi_{\alpha,j}\in Psh(\bar{M},\eta)$ such that
\begin{enumerate}
\item  $\varphi_{\alpha,j}$ decreases to $\alpha\log\|S\|_{\widehat{h}}^{2}+\varphi_0$ (as $j\to\infty$) pointwise and smoothly on compact subsets of $M$.
\item $\psi_{\alpha,j}=\varphi_{\alpha,j}+\delta\log\log^{2}\|S\|_{\widehat{h}}^{2} \in C^{\infty}(\bar{M})\cap Psh(\bar{M},(1-\delta)\eta)$
\end{enumerate}
\end{lem}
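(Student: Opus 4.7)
The plan is to apply Theorem \ref{BK} to the auxiliary function
\[\phi_\alpha:=\alpha\log\|S\|^2_{\widehat h}+\varphi_0+\delta\log\log^2\|S\|^2_{\widehat h},\]
with $(1-\delta)\eta$ in place of $\eta$. Provided $\phi_\alpha\in Psh(\overline{M},(1-\delta)\eta)\cap C^\infty(M)$ for all sufficiently small $\alpha$, Theorem \ref{BK} produces $\psi_{\alpha,j}\in C^\infty(\overline{M})\cap Psh(\overline{M},(1-\delta)\eta)$ with $\psi_{\alpha,j}\downarrow\phi_\alpha$ pointwise on $\overline{M}$ and in $C^\infty_{loc}(M)$. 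Setting $\varphi_{\alpha,j}:=\psi_{\alpha,j}-\delta\log\log^2\|S\|^2_{\widehat h}$ immediately delivers (2), and the convergence part of (1) because $\delta\log\log^2\|S\|^2_{\widehat h}\in C^\infty(M)$. For the remaining claim $\varphi_{\alpha,j}\in Psh(\overline{M},\eta)$, the on-$M$ identity
\[\eta+i\ddbar\varphi_{\alpha,j}=(1-\delta)\eta+i\ddbar\psi_{\alpha,j}+\delta\widehat{\omega}\geq\delta\widehat{\omega}\geq 0\]
extends as a positive current across $D$ via the $L^1(\overline{M})$ integrability of $\log\log^2\|S\|^2_{\widehat h}$ noted in property (2) of \S 2.1.

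The technical core is the verification of $(1-\delta)\eta$-plurisubharmonicity of $\phi_\alpha$ on $\overline{M}$. Using $i\ddbar\log\|S\|^2_{\widehat h}=-\Theta_{\widehat h}$ and $i\ddbar\log\log^2\|S\|^2_{\widehat h}=\eta-\widehat{\omega}$, one computes on $M$
\[(1-\delta)\eta+i\ddbar\phi_\alpha=\omega_0-\alpha\Theta_{\widehat h}-\delta\widehat{\omega}.\]
By \eqref{splitoff0} one has $\omega_0-\delta\widehat{\omega}\geq\delta\widehat{\omega}\geq\delta c\eta$ for some $c>0$, while $|\Theta_{\widehat h}|_\eta\leq C$ on the compact $\overline{M}$, so the choice $\hat\alpha:=\delta c/C$ makes the right hand side pointwise nonnegative on $M$ for all $\alpha\leq\hat\alpha$. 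Upper semicontinuity of $\phi_\alpha$ on $\overline{M}$ (extended by $-\infty$ on $D$) and boundedness above are immediate: near $D$ the term $\alpha\log\|S\|^2_{\widehat h}\to-\infty$ at rate $\log\|S\|$ dominates both the $o(|\log\|S\||)$ growth of $\varphi_0$ (forced by the zero Lelong number hypothesis) and the slower divergence $\delta\log\log^2\|S\|^2_{\widehat h}\to+\infty$. Promoting the pointwise positivity on $M$ to a current inequality on $\overline{M}$ requires only $\phi_\alpha\in L^1(\overline{M})$, which follows from the zero Lelong number control of $\varphi_0$ combined with the known $L^1$ integrability of $\log\|S\|^2_{\widehat h}$ and $\log\log^2\|S\|^2_{\widehat h}$.

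The main subtle point, and the rationale for the two-parameter construction as opposed to a single-parameter version of Theorem \ref{BK}, is the careful balance of singularities at $D$: the $\alpha\log\|S\|^2_{\widehat h}$ term is essential to force $\phi_\alpha\to-\infty$ at $D$ so that $\phi_\alpha$ is upper semicontinuous on $\overline{M}$ and Theorem \ref{BK} applies in the first place; the $\delta\log\log^2\|S\|^2_{\widehat h}$ term is essential to secure a uniform-in-$\alpha,j$ Carlson--Griffiths lower bound $\omega_{\alpha,j}(0)\geq\delta\widehat{\omega}$ that will be needed for the parabolic estimates when the flow is started from these approximations. Once $\phi_\alpha$ is chosen as above, the rest of the argument reduces to the clean application of Theorem \ref{BK} and the current extension step summarized in the first paragraph.
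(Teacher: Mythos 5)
Your proposal is correct and takes essentially the same route as the paper: identify the auxiliary potential $\phi_\alpha=\alpha\log\|S\|^2_{\widehat h}+\delta\log\log^2\|S\|^2_{\widehat h}+\varphi_0$, show it lies in $Psh(\overline{M},(1-\delta)\eta)\cap C^\infty(M)$ for $\alpha$ small (using $\omega_0\geq 2\delta\widehat{\omega}$ and the boundedness of $\Theta_{\widehat h}$, and zero Lelong number to force $\phi_\alpha\to-\infty$ at $D$), apply Theorem \ref{BK}, then subtract $\delta\log\log^2\|S\|^2_{\widehat h}$; the identity $\eta+i\ddbar\varphi_{\alpha,j}=(1-\delta)\eta+i\ddbar\psi_{\alpha,j}+\delta\widehat{\omega}$ you write is exactly the paper's \eqref{splitoff3}. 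The only cosmetic difference is that the paper bounds $\alpha\,i\ddbar\log\|S\|^2_{\widehat h}$ directly between $\pm\delta\widehat{\omega}$, whereas you pass through $\widehat{\omega}\geq c\eta$ and $|\Theta_{\widehat h}|_\eta\leq C$; both give the same $\hat\alpha$ up to constants.
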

\begin{proof}
We will make use of the definitions and results of \S 2.1.  Now $i\ddbar \log\|S\|_{\widehat{h}}^{2}$ coincides with a smooth form on $\overline{M}$ and thus for $\alpha>0$ sufficiently small we have $-\delta\widehat{\omega} \leq \alpha i\ddbar \log\|S\|_{\widehat{h}}^{2}\leq \delta\widehat{\omega}$ on $M$ and it follows from \eqref{splitoff0} that

\begin{equation}\label{splitoff}(1-\delta)\eta+i\ddbar(\alpha\log\|S\|_{\widehat{h}}^{2}+\delta\log\log^{2}\|S\|_{\widehat{h}}^{2}+\varphi_0)\geq\delta\widehat{\omega}
\end{equation}
    In particular, since $\varphi_0$ has zero Lelong number (see definition \ref{psh}) the potential on the LHS of \eqref{splitoff} approaches $-\infty$ when approaching $D$ giving \begin{equation}\label{smallalpha2}\alpha\log\|S\|_{\widehat{h}}^{2}+\delta\log\log^{2}\|S\|_{\widehat{h}}^{2}+\varphi_0\in Psh(\bar{M},(1-\delta)\eta)\end{equation} for all sufficiently small $\alpha>0$.  Thus by Theorem \ref{BK} there exists $\psi_{\alpha,j}\in C^{\infty}(\bar{M})\cap Psh(\bar{M},(1-\delta)\eta)$
decreasing to $\alpha\log\|S\|_{\widehat{h}}^{2}+\delta\log\log^{2}\|S\|_{\widehat{h}}^{2}+\varphi_0$
as $j\to\infty$ pointwise on $M$ and smoothly on compact sets.   In particular, we have
\begin{equation}\label{splitoff3}
\eta+i\ddbar(-\delta\log\log^{2}\|S\|_{\widehat{h}}^{2}+\psi_{\alpha,j})= \delta\widehat{\omega}+(1-\delta)\eta+i\ddbar\psi_{\alpha,j}>\delta\widehat{\omega}
\end{equation}
so that $\varphi_{\alpha, j}:=-\delta\log\log^{2}\|S\|_{\widehat{h}}^{2}+\psi_{\alpha,j}\in Psh(\bar{M},\eta)$ decreases to \\$\alpha\log\|S\|_{\widehat{h}}^{2}+\varphi_0$
as $j\to\infty$ pointwise on $M$ and smoothly on compact sets.  Thus for all $\alpha>0$ sufficiently small $\varphi_{\alpha,j}$ satisfies the conclusions in the Lemma

\end{proof}

\begin{lem}\label{LZ}
For each $\varphi_{\alpha,j}$ in Lemma \ref{approxsequence}, $\omega_{\alpha,j}(0)=\eta+i\ddbar\varphi_{\alpha,j}$ is complete with bounded curvature on $M$ with lower bound
\begin{equation}\label{lowerbound}\omega_{\alpha,j}(0) \geq \delta \widehat{\omega}\end{equation}
and the \K Ricci flow \eqref{ckrf} has a smooth maximal bounded curvature solution $\omega_{\alpha,j}(t)$ on $M\times[0, T_{[\omega_0]})$ with initial condition $\omega_{\alpha,j}(0)=\eta+i\ddbar\varphi_{\alpha,j}$
  where $T_{[\omega_0]}$ is as in definition \ref{existenceestimate1}.
\end{lem}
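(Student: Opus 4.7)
The plan is to prove the three assertions in the order they are stated: first the lower bound \eqref{lowerbound}, which will immediately give completeness; then the bounded curvature of the initial metric $\omega_{\alpha,j}(0)$; finally the existence of the bounded-curvature flow on $M\times[0,T_{[\omega_0]})$ by appealing to the Lott--Zhang existence result already used in Lemma \ref{LZ1.1}.

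First I would verify the lower bound directly from the splitting in Lemma \ref{approxsequence}. Writing $\varphi_{\alpha,j}=-\delta\log\log^{2}\|S\|_{\widehat{h}}^{2}+\psi_{\alpha,j}$ and using the identity $\widehat{\omega}=\eta-i\ddbar\log\log^{2}\|S\|_{\widehat{h}}^{2}$ gives
\[
\omega_{\alpha,j}(0)=\eta+i\ddbar\varphi_{\alpha,j}=\delta\widehat{\omega}+\bigl((1-\delta)\eta+i\ddbar\psi_{\alpha,j}\bigr)\ge \delta\widehat{\omega},
\]
since $\psi_{\alpha,j}\in Psh(\overline{M},(1-\delta)\eta)$. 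Because $\widehat{\omega}$ is complete on $M$ (property of Carlson--Griffiths metrics, \S 2.1), this lower bound already forces $\omega_{\alpha,j}(0)$ to be complete.

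Next I would show that $\omega_{\alpha,j}(0)$ is equivalent to $\widehat{\omega}$ on $M$ and has bounded curvature of all orders. Since $\psi_{\alpha,j}\in C^{\infty}(\overline{M})$, the form $(1-\delta)\eta+i\ddbar\psi_{\alpha,j}$ is a smooth $(1,1)$-form on $\overline{M}$, hence uniformly bounded with respect to $\eta$; near $D$ the cuspidal blow-up of $\widehat{\omega}$ gives $\widehat{\omega}\geq c\eta$, so this smooth piece is dominated by $C\widehat{\omega}$ and we obtain $\delta\widehat{\omega}\le \omega_{\alpha,j}(0)\le C\widehat{\omega}$ on $M$. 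In particular $\omega_{\alpha,j}(0)$ is a cusplike metric. For higher-order control, I would use property (1) from \S 2.1: $\widehat{\omega}$ has bounded geometry of infinite order, so in the associated quasi-coordinate charts all covariant derivatives of $\widehat{\omega}$ are uniformly bounded, and the smooth form $(1-\delta)\eta+i\ddbar\psi_{\alpha,j}$ (being smooth on the compact manifold $\overline{M}$) has all its quasi-coordinate derivatives bounded as well. Adding the two and using the two-sided equivalence with $\widehat{\omega}$ yields bounded geometry of infinite order for $\omega_{\alpha,j}(0)$; in particular the bisectional curvature is bounded on $M$.

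Finally, with $\omega_{\alpha,j}(0)$ identified as a smooth cusplike Kähler metric of bounded geometry, the existence of a smooth maximal bounded-curvature solution $\omega_{\alpha,j}(t)$ to \eqref{ckrf} on $M\times[0,T_{[\omega_0]})$ follows from Theorem 8.19 of \cite{LZ} (the same input used in Lemma \ref{LZ1.1}): that theorem produces a bounded-curvature solution up to time $T_{[\omega_0]}$ defined in \eqref{existenceestimate1}, and the maximality of this time interval is a consequence of the cohomological obstruction $[\eta]+t(c_{1}(K_{\overline{M}})+c_{1}(\mathcal{O}_{D}))\in\mathcal{K}_{\overline{M}}$ becoming degenerate at $t=T_{[\omega_0]}$. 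The main technical point of the argument is really the curvature estimate for $\omega_{\alpha,j}(0)$, which I expect to be the only place where one has to appeal carefully to the quasi-coordinate structure of $\widehat{\omega}$; everything else is either a direct Monge--Ampère computation or a direct citation to \cite{LZ}.
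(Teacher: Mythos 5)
Correct, and essentially the same approach as the paper: the lower bound \eqref{lowerbound} is read off from the decomposition in Lemma \ref{approxsequence} (the paper cites \eqref{splitoff3} directly; your rewriting $\omega_{\alpha,j}(0)=\delta\widehat{\omega}+(1-\delta)\eta+i\ddbar\psi_{\alpha,j}$ is the same identity), and completeness, bounded curvature, and the maximal bounded-curvature flow then all follow by invoking Theorem~8.19 of \cite{LZ}. You make explicit the two-sided equivalence with $\widehat{\omega}$ and the bounded geometry of infinite order in quasi-coordinates, which the paper leaves implicit in the citation to \cite{LZ}; this is a helpful elaboration, not a different route.
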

\begin{proof}
 The lower bound in \eqref{lowerbound} follows immediately from \eqref{splitoff3}. On the other hand we can also write  $\eta+i\ddbar\varphi_{\alpha,j}=\eta+i\ddbar(-\delta\log\log^{2}\|S\|_{\widehat{h}}^{2}+\psi_{\alpha,j})$ where  $\psi_{\alpha,j}:=\varphi_{\alpha,j}+\delta\log\log^{2}\|S\|_{\widehat{h}}^{2} \in C^{\infty}(\overline{M})$ as in Lemma \ref{approxsequence}  (2), and the Lemma then follows from Theorem 8.19 in \cite{LZ} (see also example 8.15).

\end{proof}

 Consider $\omega_{\alpha, j}(t)$ as in the above Lemma.  In the following we will derive local estimates for $\omega_{\alpha,j}(t)$ on $M\times(0,\widetilde{T})$ which will be independent of $\alpha$ sufficiently small and  $j$ sufficiently large (depending on $\widetilde{T}$). These will ensure
 $\omega_{\alpha,j}(t)$ converges in $C_{loc}^{\infty}$
to a solution $\omega(t)$ on $M\times(0,\widetilde{T})$ as $\alpha\to 0$ and
$j\to\infty$. Then we will show that $\omega_{\alpha,j}(t)$ in fact
converges in $C_{loc}^{\infty}$ on $M\times[0,\widetilde{T})$ and the limit
solution is complete for a short time. Since $\widetilde{T}<T_{[\omega_0]}$ in Assumption 2 was arbitrary,
a diagonal argument will provide a solution on $M\times[0,T_{[\omega_0]})$ as in Theorem \ref{T1}.  We may derive as in \eqref{pma-smooth-appr} that
\begin{equation}\label{e0aj} \omega_{\alpha,j}(t)=\theta_t+i\ddbar\varphi_{\alpha,j}(t)\end{equation}
where $\varphi_{\alpha,j}(t) $ solves the parabolic Monge \A equation:

\begin{equation}\label{pma}
\begin{split}
\left\{
   \begin{array}{ll}
\partial_{t}\varphi_{\alpha,j}(t)  = \displaystyle \log\frac{\|S\|^2\log\|S\|^2(\theta_{t}+i\partial\bar{\partial}\varphi_{\alpha,j}(t))^{n}}{\Omega};\\
\varphi_{\alpha,j}(0) =  \varphi_{\alpha,j}.
  \end{array}
 \right.\\
\vspace{10pt}
\theta_{t}:=\eta+t\chi; \hspace{12pt} \chi:=-\Ric\Omega+\Theta-i\partial\bar{\partial}\log\log^{2}\|S\|^{2}\\
\end{split}
\end{equation}
on $M\times[0, T_{[\omega_0 ]})$.

\begin{rem} Note that $\|S\|^2= \|S\|_{\widetilde{h}}^2$ is used in  \eqref{pma} while  $\|S\|_{\widehat{h}}^2$ is used in Lemma \ref{approxsequence} in defining $ \varphi_{\alpha,j}$.  This is to ensure that the solutions $\omega_{\alpha,j}(t)$ on $M\times[0, T_{[\omega_0]})$ are independent of $\widetilde{T}$.
\end{rem}

We will derive local uniform bounds of $|\varphi_{\alpha,j}(t)|$ and $|\partial_{t}\varphi_{\alpha,j}(t)|$
on compact subsets of $M\times(0,\widetilde{T}]$  where the bounds will be independent of $\alpha, j$.  We will use these to derive uniform local trace estimates for $\omega_{\alpha,j}(t)$ which combined with the local Evans-Krylov estimates will yield the desired uniform $C_{loc}^{\infty}$ estimates.

\subsection{A priori estimates for $\omega_{\alpha, j}(t)$}
Recall the choices for $0<T<\widetilde{T}<T_{[\omega_0 ]}$ and $\widetilde{h}, \widetilde{\Omega}, \widetilde{\beta}$ in Assumption \ref{ass2} and the notation there. Recall also the definitions of $\theta_t$ and $\chi$ from \eqref{pma}.    We fix some $\hat{\alpha}$ from Lemma 4.1 and will always assume that $\alpha \leq \hat{\alpha}$ in the following.

\subsubsection{Local $C^{0}$ estimates of $\varphi_{\alpha,j}(t)$.}

From Lemma  \ref{approxsequence} there is a constant $C$ and constant $K(\widetilde{\beta})$ such that

\begin{equation}\label{C0esitmatee1}
\frac{\widetilde{\beta}}{2} \log \|S\|^2 - K_{\widetilde{\beta}}\leq \varphi_{\alpha,j}<C\end{equation}
on $M$ for all $\alpha\leq \widetilde{\beta}/2$ and all $j$.  The upper bound follows simply from Lemma \ref{approxsequence} (2) and the fact the $\psi_{\alpha,j}\in C^{\infty}(M)$ is decreasing.  On the other hand, the fact that $\varphi_{\alpha,j}\downarrow\alpha\log\|S\|_{\widehat{h}}^{2}+\varphi_0$, and $\varphi_0$ has zero Lelong number (see definition \ref{psh}) and that $\|S\|_{\widehat{h}}(x) \to 0$ as $x\to D$ in $\overline{M}$, while $\log \|S\|^2=\log \widetilde{c}+\log \|S\|_{\widetilde{h}}^2$ together imply the lower bound in \eqref{C0esitmatee1} some constant $K_{\widetilde{\beta}}>0$ and any $\alpha\leq \widetilde{\beta}/2$ and $j$

\begin{thm}\label{mainC0estimates}
There is a bounded continuous function $U(t)$ on $[0,\widetilde{T}]$ such that
$\varphi_{\alpha,j}(t)\leq U(t)$ on $M\times [0,\widetilde{T}]$ for all $\alpha$ and $j$.   There is a continuous function
$L_{\widetilde{\beta}}(t)$ on $[0,(1-c_{\widetilde{\beta}})\widetilde{T}]$ such that $\frac{3}{2}\widetilde{\beta}\log\|S\|^{2}+L_{\widetilde{\beta}}(t)\leq\varphi_{\alpha,j}(t)$ on $M\times [0,(1-c_{\widetilde{\beta}})\widetilde{T}]$
for all $\alpha\leq\widetilde{\beta}/2$ and all $j$. \end{thm}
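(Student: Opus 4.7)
My plan is to establish the upper and lower bounds for $\varphi_{\alpha,j}(t)$ separately via parabolic maximum principle arguments on $M \times [0,\widetilde{T}]$, closely following the template of Lemma \ref{C0-smooth-appr}. Since $\omega_{\alpha,j}(t)$ is a complete solution of bounded curvature (Lemma \ref{LZ}) on the non-compact manifold $M$, each extremum argument has to be run along an Omori-Yau sequence rather than at an attained point, exactly as in the proof of Lemma \ref{C0-smooth-appr}. The initial estimate \eqref{C0esitmatee1} furnishes the required two-sided control at $t=0$ uniformly in $\alpha \leq \widetilde{\beta}/2$ and $j$.

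For the upper bound, I take $\psi := \varphi_{\alpha,j}(t) - C_0 t$. At a supremum of $\psi$ on $M\times[0,\widetilde{T}]$, $i\ddbar\varphi_{\alpha,j} \leq 0$ gives $\omega_{\alpha,j}(t) \leq \theta_t$, and \eqref{pma} yields $\partial_t\varphi_{\alpha,j} \leq \log(\|S\|^2\log^2\|S\|^2\,\theta_t^n/\Omega)$. Using the identity $\widehat{\omega} = \eta - i\ddbar\log\log^2\|S\|^2$ I rewrite $\theta_t = (1-t)\eta + t\widehat{\omega} + t(-\Ric\Omega + \Theta)$, which is $\leq C\widehat\omega$ on $M\times[0,\widetilde T]$ by Assumption \ref{ass2}(2); combined with property (3) of \S 2.1 the right-hand side is bounded above by a constant $C_1$ independent of $\alpha, j$. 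Choosing $C_0 > C_1$ forces the supremum to be at $t=0$, so $U(t) := C + C_0 t$ works by the upper bound in \eqref{C0esitmatee1}.

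For the lower bound I use the weighted test function
\[
\psi := \varphi_{\alpha,j}(t) - \tfrac{3}{2}\widetilde{\beta}\log\|S\|^2 - h(t),
\]
where $h$ is continuous on $[0,(1-c_{\widetilde{\beta}})\widetilde{T}]$ with $h(0)=0$ and is to be chosen. By \eqref{C0esitmatee1} and $\log\|S\|^2 \leq 0$, $\psi(\cdot,0) \geq -K_{\widetilde{\beta}}$. Suppose an interior infimum of $\psi$ is reached at $(\bar x,\bar t)$ with $\bar t \in (0,(1-c_{\widetilde{\beta}})\widetilde T]$; then $i\ddbar\psi \geq 0$ forces $\omega_{\alpha,j}(\bar t) \geq \theta_{\bar t} - \tfrac{3}{2}\widetilde{\beta}\Theta$. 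The crucial ingredient is a uniform quantitative positivity
\[
\theta_t - \tfrac{3}{2}\widetilde{\beta}\Theta \geq c_3(t)\,\widehat\omega \quad\text{on } M\times[0,(1-c_{\widetilde{\beta}})\widetilde T],
\]
for some continuous $c_3(t)>0$ on $(0,(1-c_{\widetilde{\beta}})\widetilde T]$. Combined with \eqref{pma} and property (3) of \S 2.1 this gives a lower bound $\partial_t\varphi_{\alpha,j}(\bar x,\bar t) \geq G(\bar t)$ depending only on $\bar t$. Choosing $h$ with $h'(t) < G(t)$ then produces $\partial_t\psi > 0$ at the infimum, contradicting the minimum condition; hence the infimum is at $t=0$, giving the claimed bound with $L_{\widetilde{\beta}}(t) := -K_{\widetilde{\beta}} + h(t)$.

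The main obstacle is verifying the quantitative positivity $\theta_t - \tfrac{3}{2}\widetilde{\beta}\Theta \geq c_3(t)\widehat\omega$ over the correct time interval. Writing $\theta_t = (1-t)\eta + t\widehat\omega - t\Ric\Omega + (t-\tfrac{3}{2}\widetilde{\beta})\Theta$, the perturbation $(t-\tfrac{3}{2}\widetilde{\beta})\Theta$ must be absorbed into the dominant pieces $(1-t)\eta$ and $t\widehat\omega$; since $\theta_0 = \eta$ does not dominate the cusp singularity of $\widehat\omega$, the positivity for $t>0$ relies on the $t\widehat\omega$ summand, and the delicate interplay between the bound $|\Theta| \leq (c_{\widetilde{\beta}}/\widetilde{\beta})\eta$ from Assumption \ref{ass2}(2), the constraint $c_{\widetilde{\beta}}<\tfrac{1}{2}$, and the time restriction $T < (1-c_{\widetilde{\beta}})\widetilde T$ must be exploited carefully to produce exactly the factor $\tfrac{3}{2}$ appearing in the statement. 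The remaining ingredients---Omori-Yau in place of the classical maximum principle and bounds on $\partial_t^2\psi$ coming from the bounded curvature of the flow---transfer essentially verbatim from Lemma \ref{C0-smooth-appr}.
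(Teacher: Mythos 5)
Your upper bound argument is essentially the paper's: one compares $\varphi_{\alpha,j}(t)$ with a monotone function of $t$ built from an upper bound on $\log(\|S\|^2\log^2\|S\|^2\,\theta_t^n/\Omega)$ (this is exactly Lemma~\ref{C0estimateprelemma}(1), which gives $C_1(1+t)$), and either runs the maximum principle at an attained max or along an Omori--Yau sequence using bounded curvature of $\omega_{\alpha,j}(t)$. That part is fine.

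The lower bound has a genuine gap that you flagged but did not close. By inserting the barrier $\tfrac{3}{2}\widetilde\beta\log\|S\|^2$ into the test function, at an interior minimum you get $\omega_{\alpha,j}(\bar t)\geq\theta_{\bar t}-\tfrac{3}{2}\widetilde\beta\Theta$, and you then need $\theta_t-\tfrac{3}{2}\widetilde\beta\Theta$ bounded below by a positive multiple of $\widehat\omega$ on all of $M\times[0,(1-c_{\widetilde\beta})\widetilde T]$. But Assumption~\ref{ass2}(2) only gives $\pm\widetilde\beta\Theta\leq c_{\widetilde\beta}\eta$ with $c_{\widetilde\beta}<\tfrac{1}{2}$, so scaling yields
$\theta_t-\tfrac{3}{2}\widetilde\beta\Theta\geq(1-\tfrac{3}{2}c_{\widetilde\beta})\,\theta_{t/(1-\frac{3}{2}c_{\widetilde\beta})}$,
which is only guaranteed positive for $t\leq(1-\tfrac{3}{2}c_{\widetilde\beta})\widetilde T$, a strictly smaller interval than $[0,(1-c_{\widetilde\beta})\widetilde T]$. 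Nothing in the assumptions rescues the missing range. The paper's proof avoids this entirely: it runs the maximum principle against the barrier $\widetilde\beta\log\|S\|^2$ (not $\tfrac{3}{2}\widetilde\beta\log\|S\|^2$), for which Lemma~\ref{C0estimateprelemma}(2) provides exactly the needed positivity $\theta_t-\widetilde\beta\Theta\geq(1-c_{\widetilde\beta})\theta_{t/(1-c_{\widetilde\beta})}$ on $[0,(1-c_{\widetilde\beta})\widetilde T]$, yielding the stronger conclusion $\varphi_{\alpha,j}(t)\geq\widetilde\beta\log\|S\|^2+L_{\widetilde\beta}(t)$. Since $\log\|S\|^2<0$, this is strictly stronger than the stated $\tfrac{3}{2}\widetilde\beta\log\|S\|^2+L_{\widetilde\beta}(t)\leq\varphi_{\alpha,j}(t)$ (indeed the $\tfrac{3}{2}$ in the statement appears to be a typo: the $\widetilde\beta$ form is what is actually invoked in Theorem~\ref{thm:lowerbddphidot}, where $\varphi_{\alpha,j}(t)-\widetilde\beta\log\|S\|^2\geq C$ is used). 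The fix for your proposal is simply to replace $\tfrac{3}{2}\widetilde\beta$ by $\widetilde\beta$ in the test function and invoke Lemma~\ref{C0estimateprelemma}(2). One more small point: for the lower bound the minimum \emph{is} attained (the test function tends to $+\infty$ uniformly near $D$ because of the $-\widetilde\beta\log\|S\|^2$ term, the lower barrier in \eqref{C0esitmatee1}, and the bounded-curvature control on $\dot\varphi_{\alpha,j}$), so the Omori--Yau step is needed only for the upper bound.
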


 We first prove

\begin{lem}\label{C0estimateprelemma} We have
\begin{enumerate}
\item $\frac{\|S\|^{2}\log^{2}\|S\|^{2}\theta_{t}^{n}}{\Omega}\leq C_{1}(1+t)$
for all $t\in[0,\widetilde{T}]$;
\item $\frac{\|S\|^{2}\log^{2}\|S\|^{2}(\theta_{t}-\widetilde{\beta} \Theta)^{n}}{\Omega}\geq C_{2}t$
for all $t\in[0,(1-c_{\widetilde{\beta}})\widetilde{T}]$,
\end{enumerate}

where the constants $C_{i}>0$ depend on $\Omega$, $\widehat{h}$ and $\widetilde{T}$.

\end{lem}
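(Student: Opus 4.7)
The plan is to decompose $\theta_t$ so that the singularity at $D$ is carried by a single rank-one positive semidefinite form. Using the Carlson--Griffiths expansion \eqref{CGmetric}, one has $\widehat{\omega}=\eta+\frac{2\Theta}{\log\|S\|^2}+\omega_\ast$ with
\[
\omega_\ast := 2i\,\frac{\partial\log\|S\|^2\wedge\bar\partial\log\|S\|^2}{(\log\|S\|^2)^2},
\]
so substituting $\widehat{\omega}-\eta$ into $\chi=-\Ric(\Omega)+\Theta+\widehat{\omega}-\eta$ and regrouping gives
\[
\theta_t=\widetilde{\theta}_t+t\omega_\ast,\qquad \widetilde{\theta}_t:=\eta+t\Bigl(-\Ric(\Omega)+\Theta+\tfrac{2\Theta}{\log\|S\|^2}\Bigr).
\]
Since $1/\log\|S\|^2\to 0$ on $D$, the form $\widetilde{\theta}_t$ extends continuously to $\overline{M}$, and by Assumption~\ref{ass2}(1) together with the choice of $\widetilde{\Omega}$ and $\tilde{c}$ it satisfies $\widetilde{\theta}_t\geq c_0\eta$ on $\overline{M}\times[0,\widetilde{T}]$ for some $c_0>0$.

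The rank-one property $\omega_\ast^2=0$ collapses the binomial expansion of $(\widetilde{\theta}_t+t\omega_\ast)^n$ into exactly two terms,
\[
\theta_t^n=\widetilde{\theta}_t^n+nt\,\widetilde{\theta}_t^{n-1}\wedge\omega_\ast.
\]
For part (1), the first summand multiplied by $\|S\|^2\log^2\|S\|^2/\Omega$ is uniformly bounded on $M\times[0,\widetilde{T}]$ because $\widetilde{\theta}_t$ is bounded on $\overline{M}$ while $\|S\|^2\log^2\|S\|^2\to 0$ at $D$. The second summand is handled by the cancellation that already gives property~(3) of \S 2.1 for $\widehat{\omega}^n$: the singularity of $\omega_\ast$ is of order $1/(|z_1|^2\log^2|z_1|^2)$ in local coordinates with $D=\{z_1=0\}$, which is precisely offset by the $\|S\|^2\log^2\|S\|^2$ factor. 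Together these yield $\|S\|^2\log^2\|S\|^2\,\theta_t^n/\Omega\leq C_1(1+t)$.

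For part (2), write $\theta_t-\widetilde{\beta}\Theta=(\widetilde{\theta}_t-\widetilde{\beta}\Theta)+t\omega_\ast$. Assumption~\ref{ass2}(2) gives $\eta-\widetilde{\beta}\Theta\geq (1-c_{\widetilde{\beta}})\eta$, so with $s:=t/(1-c_{\widetilde{\beta}})$ one obtains $\widetilde{\theta}_t-\widetilde{\beta}\Theta\geq (1-c_{\widetilde{\beta}})\widetilde{\theta}_s$. For $t\leq (1-c_{\widetilde{\beta}})\widetilde{T}$ this places $s\leq\widetilde{T}$, whence $\widetilde{\theta}_t-\widetilde{\beta}\Theta\geq c'\eta$ with $c':=(1-c_{\widetilde{\beta}})c_0>0$. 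Expanding, again using $\omega_\ast^2=0$,
\[
(\theta_t-\widetilde{\beta}\Theta)^n\geq (c'\eta+t\omega_\ast)^n = (c')^n\eta^n+n(c')^{n-1}t\,\eta^{n-1}\wedge\omega_\ast.
\]
Multiplied by $\|S\|^2\log^2\|S\|^2/\Omega$, the second summand is bounded below by a constant multiple of $t$ on a small neighbourhood $U$ of $D$ (same cancellation as in part (1)), while on the compact set $M\setminus U$ the first summand is bounded below by a positive constant, which dominates $t$ uniformly because $t\leq(1-c_{\widetilde{\beta}})\widetilde{T}$. Patching the two regions yields the claimed $\geq C_2 t$.

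The main technical point is that $\omega_\ast$ vanishes at critical points of $\log\|S\|^2$ inside $M$, so the $t$-term in the lower bound for part (2) cannot by itself produce the estimate $\geq C_2 t$ uniformly on $M$; one must combine it with the non-singular $(c'\eta)^n$ contribution, trading the factor $t$ against the uniform upper bound $(1-c_{\widetilde{\beta}})\widetilde{T}$ on compact subsets away from $D$. All other ingredients are elementary bookkeeping with the decomposition $\theta_t=\widetilde{\theta}_t+t\omega_\ast$ and the rank-one identity $\omega_\ast^2=0$.
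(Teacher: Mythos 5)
Your proof is correct and uses essentially the same decomposition $\theta_t=\widetilde{\theta}_t+t\omega_\ast$ together with the rank-one identity $\omega_\ast^2=0$ as the paper does. The only notable presentational difference is that the paper establishes a two-sided bound $C_2t\leq \|S\|^2\log^2\|S\|^2\,\theta_t^n/\Omega\leq C_1(1+t)$ directly and obtains part (2) from the lower bound via the algebraic comparison $\theta_t-\widetilde{\beta}\Theta\geq(1-c_{\widetilde{\beta}})\theta_{t/(1-c_{\widetilde{\beta}})}$, whereas you prove (2) separately; moreover the paper leaves the patching between the non-singular term near $D$ and the singular term away from $D$ implicit, a detail you usefully make explicit.
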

\begin{proof}

 Now it suffices to show that $C_{2}t\leq\frac{\|S\|^{2}\log^{2}\|S\|^{2}\theta_{t}^{n}}{\Omega}\leq C_{1}+C_{1}t$
for all $t\in[0,\widetilde{T}]$, where $C$ is a constant depending only on $h$
and $\widetilde{T}$, since by (2) Assumption \ref{ass2} we have for all $t\in [0, (1-c_{\widetilde{\beta}})\widetilde{T}]$
\begin{eqnarray*}
\frac{\|S\|^{2}\log^{2}\|S\|^{2}(\theta_{t}-\widetilde{\beta}\Theta)^{n}}{\Omega} & \geq & \frac{(1-c_{\widetilde{\beta}})^{n}\|S\|^{2}\log^{2}\|S\|^{2}\theta_{\frac{t}{1-c_{\widetilde{\beta}}}}^{n}}{\Omega}\\
 & \geq & \frac{1}{2^{n}}\frac{\|S\|^{2}\log^{2}\|S\|^{2}\theta_{\frac{t}{1-c_{\widetilde{\beta}}}}^{n}}{\Omega}.
\end{eqnarray*}
 Now $\theta_{t}=\tilde{\theta}_{t}+\frac{2ti\partial\|S\|^{2}\wedge\bar{\partial}\|S\|^{2}}{\|S\|^{4}\log^{2}\|S\|^{2}}$
where $\tilde{\theta}_{t}=\eta-t\Ric\Omega+t\Theta+\frac{2t\Theta}{\log\|S\|^{2}}.$
 Thus $\theta_{t}^{n}=\tilde{\theta}_{t}^{n}+n\tilde{\theta}_{t}^{n-1}\wedge(\frac{2ti\partial\|S\|^{2}\wedge\bar{\partial}\|S\|^{2}}{\|S\|^{4}\log^{2}\|S\|^{2}})$ and
 $$\|S\|^{2}\log\|S\|^{2}\theta_{t}^{n}=\|S\|^{2}\log^{2}\|S\|^{2}\tilde{\theta}_{t}^{n}+n\tilde{\theta}_{t}^{n-1}\wedge\frac{2ti\partial\|S\|^{2}\wedge\bar{\partial}\|S\|^{2}}{\|S\|^{2}}$$
From this, the fact that $\frac{2i\partial\|S\|^{2}\wedge\bar{\partial}\|S\|^{2}}{\|S\|^{2}}$
is a continuous positive (1,1) form on $\bar{M}$, and the positivity of $\tilde{\theta}_{t}$ on $t\in [0, \widetilde{T}]$ we conclude
$C_{2}t\leq\frac{\|S\|^{2}\log\|S\|^{2}\theta_{t}^{n}}{\Omega}\leq C_{1}+C_{1}t$ on $M\times[0, \widetilde{T}]$
as claimed.
\end{proof}

\begin{proof} [Proof of Theorem \ref{mainC0estimates}]
For all $\alpha \leq \widetilde{\beta}/2$ and $j$, consider $$H_{\epsilon}=\varphi_{\alpha,j}(t)-\int_{0}^{t}\log[C_{1}(1+t)]dt-\epsilon t$$ 
on $M\times[0,\widetilde{T}]$  for any $\e>0$ and $C_{1}$ from Lemma \ref{C0estimateprelemma}. Since
$H_{\epsilon}(x,0)=\varphi_{\alpha,j}$ is bounded above by \eqref{C0esitmatee1} and $|\partial_{t}\varphi_{\alpha,j}(t)|$
and hence $|\partial_{t}H_{\e}|$ is bounded on $M\times[0,\widetilde{T}]$ (by \eqref{pma} and that $\omega_{\alpha,j}(t)$ is a complete bounded curvature solution to \eqref{ckrf}), it follows $H_{\epsilon}$ is bounded above on $M\times[0,\widetilde{T}]$.  Now suppose $H_{\epsilon}$ attains a maximum value on $M\times[0,\widetilde{T}]$ at $(\bar{x}, \bar{t})$.  Then if $\bar{t}>0$, using \eqref{pma} and Lemma  \ref{C0estimateprelemma} we have at $(\bar{x}, \bar{t})$:
\begin{eqnarray*}\label{e1}
\partial_{t}H_{\epsilon} & \leq & \log\frac{\|S\|^{2}\log\|S\|^{2}\theta_{\bar{t}}^{n}}{\Omega}-\log[C_{1}(1+\bar{t})]-\epsilon\leq -\e\\
\end{eqnarray*}
which contradics the maximality assumption.  Thus $\bar{t}=0$ in which case we may simply take $U(t)=C+\int_{0}^{t}\log[C_{1}(1+t)]dt$ for some $C$ by \eqref{C0esitmatee1}.   In general, if $H_{\epsilon}$ does not attain a maximum value on $M\times[0,\widetilde{T}]$ we may argue as in the proof of Lemma \ref{C0-smooth-appr}
and apply the above estimates along an appropriate sequence in space-time (using the Omori-Yau maximum principle) and likewise take $U(t)=C+\int_{0}^{t}\log[C_{1}(1+t)]dt$ for some $C$ in this case as well.

For the lower bound we take $Q_{\epsilon}(x,t)=\varphi_{\alpha,j}(x,t)-\widetilde{\beta}\log|S(x)|^{2}-\int_{0}^{t}\log(C_{2}t)dt+\epsilon t$
on $M\times[0,(1-c_{\widetilde{\beta}})\widetilde{T}]$ for any $\e>0$ and $C_2$ from Lemma
\ref{C0estimateprelemma}.  It follows from \eqref{C0esitmatee1}, \eqref{pma} and that $\omega_{\alpha,j}(t)$ is a bounded curvature solution that $Q_{\epsilon}(x,t)\to\infty$ uniformly as $x$ approaches $D$ on $M\times[0,\widetilde{T}]$ and hence $Q_{\epsilon}(x,t)$ attains an interior minimum on $M\times[0,(1-c_{\widetilde{\beta}})\widetilde{T}]$. Using again Lemma
\ref{C0estimateprelemma}  we may argue as above for the upper bound and conclude that  $L_{\widetilde{\beta}}(t)$ can be taken as $-K_{\widetilde{\beta}}+\int_{0}^{t}\log(C_{2}t)dt$.
\end{proof}

%, so $Q_{\epsilon}(x,t)$
%is bounded below and has an interior minimum. If the minimum point
%occurs at $(\bar{x},\bar{t})\in M\times[0,(1-c_{\widetilde{\beta}})T]$ with $\bar{t}>0$,
%then
%\begin{eqnarray*}
%0 & \geq & \partial_{t}Q_{\epsilon}(\bar{x},\bar{t})\\
% & = & \log\frac{\|S\|^{2}\log\|S\|^{2}(\theta_{t}+i\partial\bar{\partial}\varphi_{\alpha,j}(t))^{n}}%{\Omega}-\log(C_{1}t)+\epsilon\\
 %& \geq & \log\frac{\|S\|^{2}\log\|S\|^{2}(\theta_{t}-\widetilde{\beta}\Theta)^{n}}{\Omega}-\log(C_{1}t)+\epsilon\\
 %& > & \epsilon,
%\end{eqnarray*}
%a contradiction. So $Q_{\epsilon}$ attains it minimum at $t=0$.
%Since $Q_{\epsilon}(x,0)\geq-K_{\widetilde{\beta}}$ and $\epsilon$ is arbitrary,

\subsubsection{Local $C^{0}$ estimates of $\dot{\varphi}_{\alpha,j}(t)$.}

\begin{thm} We have
 $\dot{\varphi}_{\alpha,j}(t)\leq\frac{U(t)-\widetilde{\beta}\log\|S\|^{2}+K_{\widetilde{\beta}}}{t}+n$
on $M\times[0,(1-c_{\widetilde{\beta}})\widetilde{T}]$ for all $\alpha\leq\widetilde{\beta}/2$ and all $j$.  \end{thm}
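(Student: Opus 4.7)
The plan is to apply the parabolic maximum principle to the auxiliary function
\[
H := t\dot{\varphi}_{\alpha,j} - \varphi_{\alpha,j} + \widetilde{\beta}\log\|S\|^{2} - nt
\]
on $M\times[0,(1-c_{\widetilde{\beta}})\widetilde{T}]$. This is the natural analogue of the quantity $t\dot{\varphi}_{\e}-\varphi_{\e}-nt$ from Lemma \ref{C0-smooth-appr}; the extra term $\widetilde{\beta}\log\|S\|^{2}$ is chosen precisely to absorb the singular initial lower bound \eqref{C0esitmatee1} of $\varphi_{\alpha,j}$ into a uniform bound at $t=0$, while still being compatible with Assumption \ref{ass2}(2) so that the resulting evolution inequality remains strictly negative.

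The key computations are the evolution and initial value of $H$. Using $\omega_{\alpha,j}(t)=\theta_{t}+i\ddbar\varphi_{\alpha,j}$ with $\theta_{t}=\eta+t\chi$ and the identity $(\partial_{t}-\Delta)\dot{\varphi}_{\alpha,j}=\text{tr}_{\omega_{\alpha,j}}\chi$ obtained as in Lemma \ref{C0-smooth-appr}, together with the substitution $\Delta\varphi_{\alpha,j}=n-\text{tr}_{\omega_{\alpha,j}}\eta-t\,\text{tr}_{\omega_{\alpha,j}}\chi$, one finds $(\partial_{t}-\Delta)(t\dot{\varphi}_{\alpha,j}-\varphi_{\alpha,j}-nt)=-\text{tr}_{\omega_{\alpha,j}}\eta$. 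Combined with $(\partial_{t}-\Delta)(\widetilde{\beta}\log\|S\|^{2})=\widetilde{\beta}\,\text{tr}_{\omega_{\alpha,j}}\Theta$ (using $i\ddbar\log\|S\|^{2}=-\Theta$), this gives
\[
(\partial_{t}-\Delta)H=-\text{tr}_{\omega_{\alpha,j}}(\eta-\widetilde{\beta}\Theta)\leq -(1-c_{\widetilde{\beta}})\,\text{tr}_{\omega_{\alpha,j}}\eta<0
\]
by Assumption \ref{ass2}(2). At $t=0$, since $\alpha\leq\widetilde{\beta}/2$, the lower bound \eqref{C0esitmatee1} yields
\[
H(\cdot,0)\leq -\tfrac{\widetilde{\beta}}{2}\log\|S\|^{2}+K_{\widetilde{\beta}}+\widetilde{\beta}\log\|S\|^{2}=\tfrac{\widetilde{\beta}}{2}\log\|S\|^{2}+K_{\widetilde{\beta}}\leq K_{\widetilde{\beta}},
\]
using $\|S\|^{2}<1$. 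Applying the parabolic maximum principle in the case analysis of Lemma \ref{C0-smooth-appr} (an interior maximum contradicts the strict evolution inequality; in the non-attained case, Omori--Yau on the complete bounded curvature manifold $(M,\omega_{\alpha,j}(t))$ of Lemma \ref{LZ} supplies a space--time sequence approaching the supremum along which $\Delta H\to 0^{+}$ and $\partial_{t}H$ is bounded below by a null sequence, again forcing the supremum to occur in the limit at $t=0$), we obtain $H\leq K_{\widetilde{\beta}}$ throughout. Rearranging and inserting the upper bound $\varphi_{\alpha,j}(t)\leq U(t)$ from Theorem \ref{mainC0estimates} gives the claimed estimate.

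The main obstacle is justifying that $H$ is bounded above on $M\times[0,(1-c_{\widetilde{\beta}})\widetilde{T}]$ in order to invoke the Omori--Yau step. The a priori lower bound $\varphi_{\alpha,j}(t)\geq \tfrac{3}{2}\widetilde{\beta}\log\|S\|^{2}+L_{\widetilde{\beta}}(t)$ from Theorem \ref{mainC0estimates} only excludes growth faster than $-\tfrac{\widetilde{\beta}}{2}\log\|S\|^{2}$ for $-\varphi_{\alpha,j}+\widetilde{\beta}\log\|S\|^{2}$ at $D$. The missing boundedness is recovered from Lemma \ref{LZ}: the solution $\omega_{\alpha,j}(t)$ is a Carlson--Griffiths type bounded curvature metric, so the potential $\varphi_{\alpha,j}(t)$ can only carry a $\log\log^{2}\|S\|^{2}$ type singularity at $D$, which is dominated by the pole $\widetilde{\beta}\log\|S\|^{2}\to -\infty$. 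Hence $H\to -\infty$ at $D$ uniformly on compact time subintervals, and the maximum principle argument is legitimate.
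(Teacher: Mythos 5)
Your proposal is correct and reaches exactly the stated bound, but it takes a genuinely different (and self-contained) route from the paper's. The paper sets $H=t\dot{\varphi}_{\alpha,j}-(\varphi_{\alpha,j}(t)-\varphi_{\alpha,j})-nt$, for which $(\partial_t-\Delta)H=-\mathrm{tr}_{\omega_{\alpha,j}}\,\omega_{\alpha,j}(0)<0$ and $H(\cdot,0)=0$; boundedness of $H$ comes from $\dot\varphi$ being bounded for a bounded-curvature cusplike solution, the Shi maximum principle gives $H\le 0$, i.e.\ $\dot\varphi\le\tfrac{\varphi(t)-\varphi_{\alpha,j}}{t}+n$, and only then does one insert the bound $U(t)$ from Theorem \ref{mainC0estimates} and the lower bound on $\varphi_{\alpha,j}$ from \eqref{C0esitmatee1} to arrive at the displayed estimate. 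Your variant $H=t\dot\varphi-\varphi+\widetilde\beta\log\|S\|^2-nt$ bakes the logarithmic barrier directly into the comparison function: the extra term contributes $+\widetilde\beta\,\mathrm{tr}_{\omega}\Theta$ to the evolution, so Assumption \ref{ass2}(2) is used inside the maximum principle (giving $(\partial_t-\Delta)H\le-(1-c_{\widetilde\beta})\mathrm{tr}_\omega\eta<0$) rather than only at the end, and \eqref{C0esitmatee1} is used only to control $H(\cdot,0)\le K_{\widetilde\beta}$. A genuine advantage of your choice is that $\widetilde\beta\log\|S\|^2\to-\infty$ dominates the $\log\log^2\|S\|^2$ singularity of the potential (by Lemma \ref{approxsequence}(2) plus bounded $\dot\varphi$), so $H\to-\infty$ toward $D$ and the supremum is honestly attained on $M\times[0,(1-c_{\widetilde\beta})\widetilde T]$ — the Omori--Yau fallback you sketch is actually unnecessary, which you eventually note yourself; you could streamline the write-up by leading with that observation. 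Small presentational point: where you invoke ``$\Delta\varphi_{\alpha,j}=n-\mathrm{tr}_{\omega_{\alpha,j}}\eta-t\,\mathrm{tr}_{\omega_{\alpha,j}}\chi$'', this is just $\Delta\varphi=n-\mathrm{tr}_\omega\theta_t$ together with $\theta_t=\eta+t\chi$; it may be cleaner to state it that way so the cancellation giving $-\mathrm{tr}_\omega\eta$ is transparent.
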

\begin{proof}
The proof is the same as Proposition 3.1 in \cite{GZ}.  Let $H=t\dot{\varphi}_{\alpha,j}(t)-(\varphi_{\alpha,j}(t)-\varphi_{\alpha,j})-nt$.  Then using \eqref{pma}  we have $(\partial_{t}-\Delta)H<0$, where $\Delta$ is the Laplacian with respect to $\omega_{\alpha,j}(t)$. Also, since that $\omega_{\alpha,j}(t)$ is a bounded curvature solution it follows $H$ is a bounded function
on $M\times[0,\widetilde{T}]$, and thus by the maximum principle in \cite{Shi2} we $H \leq \sup_{x\in M}H(x,0)=0$ on $M\times[0,\widetilde{T}]$. Then combining
with Lemma \ref{C0estimateprelemma} and \eqref{C0esitmatee1} we obtain the theorem.
\end{proof}

\begin{thm}
\label{thm:lowerbddphidot} For all $A>0$ with $0<\widetilde{T}-\frac{1}{A}$
, there is a smooth function $F(\|S\|^{2}(x),t)$ on $M\times(0,(1-c_{\widetilde{\beta}})(\widetilde{T}-\frac{1}{A})]$ such that
$\dot{\varphi}_{\alpha,j}(x,t)\geq F(\|S\|^{2}(x),t)$
on $M\times(0,(1-c_{\widetilde{\beta}})(\widetilde{T}-\frac{1}{A})]$ for all  $\alpha\leq 2\widetilde{\beta}$ and all $j$. \end{thm}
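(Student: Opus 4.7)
The plan is to adapt the argument of Proposition 3.1 of \cite{GZ} and the $\dot\varphi_\e$ lower bound in \eqref{222}, applying the parabolic maximum principle to a test quantity built from $\dot{\varphi}_{\alpha,j}$, $\varphi_{\alpha,j}$, and barrier terms involving $\log\|S\|^2$ and $\log\log^2\|S\|^2$. Such barriers are needed because Theorem \ref{mainC0estimates} only gives $\varphi_{\alpha,j}(t) \geq \tfrac{3}{2}\widetilde{\beta}\log\|S\|^2 + L_{\widetilde{\beta}}(t)$, which is unbounded near $D$ (unlike the bounded setting of \S 3), so any a priori bound on $\dot\varphi_{\alpha,j}$ must reflect this degeneration.

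Fix $A > 1/\widetilde T$ and set $T' := (1-c_{\widetilde{\beta}})(\widetilde T - 1/A)$. After possibly shrinking $\widetilde{\beta}$ in Assumption \ref{ass2}, choose a parameter $\widetilde{\beta}'$ strictly larger than $\tfrac{3}{2}\widetilde{\beta}$ but still small enough that an analog of Lemma \ref{C0estimateprelemma}(2) with $\widetilde{\beta}'$ in place of $\widetilde{\beta}$ retains positivity on the relevant time range, and choose $B > 0$ large. Consider
\[
G(x,t) := \dot{\varphi}_{\alpha,j}(x,t) + A\bigl(\varphi_{\alpha,j}(x,t) - \widetilde{\beta}'\log\|S\|^2(x)\bigr) + B\log\log^2\|S\|^2(x) - n\log t.
\]
By Theorem \ref{mainC0estimates} the bracketed term tends to $+\infty$ as $\|S\|\to 0$, while $-n\log t \to +\infty$ as $t \to 0$, and $\dot{\varphi}_{\alpha,j}(\cdot, 0)$ is bounded on $M$ (from $\omega_{\alpha,j}(0) \geq \delta \widehat\omega$, bounded curvature, and Carlson-Griffiths (3)). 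Thus $G$ attains its infimum at some interior $(x_0, t_0) \in M\times (0, T']$ with $t_0 > 0$; otherwise we argue along an Omori-Yau minimizing sequence as in Lemma \ref{C0-smooth-appr}, using that each $\omega_{\alpha,j}(t)$ has bounded curvature by Lemma \ref{LZ}.

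A direct computation via $(\partial_t - \Delta)\dot\varphi = \text{tr}_\omega\chi$, $(\partial_t - \Delta)\varphi = \dot\varphi - n + \text{tr}_\omega\theta_t$, $\Delta\log\|S\|^2 = -\text{tr}_\omega\Theta$, $i\ddbar\log\log^2\|S\|^2 = \eta - \widehat\omega$, and $\chi + A\theta_t = A\theta_{t+1/A}$ yields
\[
(\partial_t - \Delta)G = A\,\text{tr}_\omega(\theta_{t+1/A} - \widetilde{\beta}'\Theta) + B\,\text{tr}_\omega(\widehat\omega - \eta) + A\dot\varphi_{\alpha,j} - An - \tfrac{n}{t}.
\]
The variant of Lemma \ref{C0estimateprelemma}(2) yields $(\theta_{t+1/A} - \widetilde{\beta}'\Theta)^n \geq c(t+1/A)\widehat\omega^n$ on $M\times (0,T']$, hence $\text{tr}_\omega(\theta_{t+1/A} - \widetilde{\beta}'\Theta) \geq c'(\widehat\omega^n/\omega^n)^{1/n}$ by AM-GM. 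Choosing $B$ large enough to absorb $\text{tr}_\omega\eta$ against $\text{tr}_\omega\widehat\omega$, and using $\dot\varphi = \log(\omega^n/\widehat\omega^n) + O(1)$ from Carlson-Griffiths (3), the condition $(\partial_t - \Delta)G(x_0, t_0) \leq 0$ forces $\omega^n(x_0, t_0)/\widehat\omega^n \geq \mu(A, t_0)>0$ via coercivity of $y \mapsto c_1 y^{-1/n} + A\log y$. Thus $\dot\varphi_{\alpha,j}(x_0, t_0) \geq -C'(A, t_0)$. Combining with Theorem \ref{mainC0estimates}, the sign condition $\bigl(\tfrac{3}{2}\widetilde{\beta} - \widetilde{\beta}'\bigr)\log\|S\|^2(x_0) \geq 0$, and $\log\log^2\|S\|^2 > 1$ from Assumption \ref{ass2}(3), we obtain $G(x_0, t_0) \geq -C_2(A, T')$ independent of $\alpha, j$. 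Rearranging $G(x,t) \geq -C_2$ on $M\times(0, T']$ and using the upper bound $\varphi_{\alpha,j}(t) \leq U(t)$ from Theorem \ref{mainC0estimates},
\[
\dot\varphi_{\alpha,j}(x, t) \geq -C_2 - AU(t) + A\widetilde{\beta}'\log\|S\|^2(x) - B\log\log^2\|S\|^2(x) + n\log t \;=:\; F(\|S\|^2(x), t),
\]
smooth in $\|S\|^2$ and $t$, and uniform in $\alpha \leq 2\widetilde{\beta}$ and $j$.

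The principal obstacle is the simultaneous calibration of $\widetilde{\beta}'$ and $B$: on the one hand $\widetilde{\beta}' > \tfrac{3}{2}\widetilde{\beta}$ is needed for coercivity of $G$ near $D$; on the other hand the analog of Lemma \ref{C0estimateprelemma}(2) for $\theta_s - \widetilde{\beta}'\Theta$ on the range $t + 1/A \leq (1-c_{\widetilde{\beta}'})\widetilde T$ requires $\widetilde{\beta}'$ small, forcing an a priori shrinking of $\widetilde{\beta}$ in Assumption \ref{ass2}; and $B$ must dominate the indefinite $-\eta$ term without spoiling the $\dot\varphi$ balance. These compatibility constraints together with $t + 1/A \leq (1-c_{\widetilde{\beta}})\widetilde T$ are what pin down the precise time interval $(0, (1-c_{\widetilde{\beta}})(\widetilde T - 1/A)]$ in the statement.
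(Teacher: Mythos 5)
Your strategy is the same as the paper's: apply a maximum principle to a quantity of the form $\dot\varphi_{\alpha,j} + A(\varphi_{\alpha,j}+\text{barrier}) - n\log t$, use the prior $C^0$ estimates and bounded-curvature hypothesis to find an interior minimum, compute $(\partial_t-\Delta)$ there, and extract a pointwise lower bound on $\omega_{\alpha,j}^n/\widehat\omega^n$ which in turn bounds $\dot\varphi_{\alpha,j}$ from below. The paper's test quantity is
$Q=\dot{\varphi}_{\alpha,j}+A\bigl(\varphi_{\alpha,j}-\widetilde{\beta}\log\|S\|^{2}+\epsilon\log\log^{2}\|S\|^{2}\bigr)-n\log t$,
with $\epsilon$ fixed small so that \eqref{epsilon} holds. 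Yours differs from this in two structural ways, and the second one is where the proof breaks.

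First, you replace $\widetilde\beta$ by $\widetilde\beta'>\tfrac32\widetilde\beta$, which appears to be a defensive reading of the stated lower bound $\varphi_{\alpha,j}(t)\geq\tfrac32\widetilde\beta\log\|S\|^2+L_{\widetilde\beta}(t)$. That coefficient $\tfrac32$ is in fact a misprint (the proof of Theorem \ref{mainC0estimates} yields coefficient $\widetilde\beta$ from the test function $\varphi_{\alpha,j}-\widetilde\beta\log\|S\|^2$, starting from \eqref{C0esitmatee1}), and with the correct coefficient the paper's $\widetilde\beta$ already gives $\varphi_{\alpha,j}(t)-\widetilde\beta\log\|S\|^2\geq L_{\widetilde\beta}(t)$, making your $\widetilde\beta'$ and the attendant re-verification of Lemma \ref{C0estimateprelemma}(2) and Assumption \ref{ass2} unnecessary.

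Second, and more importantly, you pull the $\log\log^2\|S\|^2$ barrier out of the $A$-bracket, introducing an independent coefficient $B$. In the Laplacian computation this produces the term $B\,\mathrm{tr}_{\omega}(\widehat\omega-\eta)=-B\,\mathrm{tr}_{\omega}(i\ddbar\log\log^2\|S\|^2)$, which is not sign-definite, and your suggestion to "choose $B$ large enough to absorb $\mathrm{tr}_\omega\eta$ against $\mathrm{tr}_\omega\widehat\omega$'' does not work as stated: both $B\,\mathrm{tr}_\omega\widehat\omega$ and $-B\,\mathrm{tr}_\omega\eta$ scale linearly in $B$, so increasing $B$ does not change the sign of their sum, and there is no assumption guaranteeing $\widehat\omega\geq\eta$. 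The paper avoids exactly this problem by keeping the barrier inside the $A$-bracket, so that after the computation one gets $A\,\mathrm{tr}_\omega\bigl(\theta_{t+1/A}-\widetilde\beta\Theta-\epsilon\,i\ddbar\log\log^2\|S\|^2\bigr)$, and then the factorization $\theta_{t+1/A}-\widetilde\beta\Theta\geq(1-c_{\widetilde\beta})\theta_{(t+1/A)/(1-c_{\widetilde\beta})}$ from Assumption \ref{ass2}(2) combined with \eqref{epsilon} gives a clean lower bound $\geq C\,\mathrm{tr}_\omega\widehat\omega$. You should set $B=A\epsilon$ (so the barrier is $A\epsilon\log\log^2\|S\|^2$), choose $\epsilon$ small according to \eqref{epsilon} rather than $B$ large, and then your Laplacian identity collapses to the paper's form; as written, the absorption step is a genuine gap.
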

\begin{proof}

 For all sufficiently small $\epsilon$, there exists a constant $C>0$ such that \begin{equation}\label{epsilon}\theta_{t}-s\epsilon i\partial\bar{\partial}\log\log^{2}\|S\|^{2}\geq C\widehat{\omega}\end{equation}
on $M\times[0, \widetilde{T}]$ for all $1\leq s\leq 2$

Fix some $A>0$ with $0<\widetilde{T}-\frac{1}{A}$ and $\e$ as above.  Let $Q=\dot{\varphi}_{\alpha,j}(t)+A(\varphi_{\alpha,j}(t)-\widetilde{\beta}\log\|S\|^{2}+\epsilon\log\log^{2}\|S\|^{2})-n\log t$.
By our previous bounds,  $Q\to\infty$ on $M$ as $t\to0$ or $\|S\|\to0$.   In fact, from \eqref{pma} and that $\omega_{\alpha,j}(t)$ is a bounded
curvature solution, $Q(x,t)\to\infty$ uniformly on $M$ as $x$ approaches $D$ for all $t\in[0,(1-c_{\widetilde{\beta}})(\widetilde{T}-\frac{1}{A})]$.  So $Q$ has a minimum on $M\times(0,(1-c_{\widetilde{\beta}})(\widetilde{T}-\frac{1}{A})]$ at some
point $(\bar{x},\bar{t})$ with $\bar{t}>0$. Let $\Delta$ be the Laplacian with respect to $\omega_{\alpha,j}(t)$, using \eqref{pma},  we have

\begin{eqnarray*}(\partial_{t}-\Delta)(\varphi_{\alpha,j}(t)-\widetilde{\beta}\log\|S\|^{2}+\epsilon\log\log^{2}\|S\|^{2}\\
= \dot{\varphi}_{\alpha,j}-n +Tr_{\omega_{\alpha,j}}\left(\theta_{t}-\widetilde{\beta}\Theta-\epsilon i\partial\bar{\partial}\log\log^{2}\|S\|^{2}\right)
\end{eqnarray*}

$$(\partial_{t}-\Delta)\dot{\varphi}_{\alpha,j}(t)=Tr_{\omega_{\alpha,j}}\chi.$$
 Then at $(\bar{x},\bar{t})$ we have the following, where we will use $C$ to denote a constant which is independent of $\alpha$, $j$ and which may differ from line to line.
\begin{eqnarray*}
0 & \geq & (\partial_{t}-\Delta)Q(\bar{x},\bar{t})\\
 & = & ATr_{\omega_{\alpha,j}}(\theta_{\bar{t}+\frac{1}{A}}-\widetilde{\beta}\Theta-\epsilon i\partial\bar{\partial}\log\log^{2}\|S\|^{2})+A\dot{\varphi}_{\alpha,j}-nA-\frac{n}{\bar{t}}\\
 & \geq & A(1-c_{\widetilde{\beta}})Tr_{\omega_{\alpha,j}}[\theta_{\frac{1}{1-c_{\widetilde{\beta}}}(\bar{t}+\frac{1}{A})}-\frac{1}{1-c_{\widetilde{\beta}}}\epsilon i\partial\bar{\partial}\log\log^{2}\|S\|^{2}]+A\dot{\varphi}_{\alpha,j}\\
 & &-nA-\frac{n}{\bar{t}}.\\
 & \geq & AC(1-c_{\widetilde{\beta}})Tr_{\omega_{\alpha,j}}\widehat{\omega}+A\log\frac{\|S\|^{2}\log\|S\|^{2}\omega_{\alpha,j}^{n}}{\Omega}-nA-\frac{n}{\bar{t}}\\
 & \geq & CTr_{\omega_{\alpha,j}\widehat{\omega}}+C\log\frac{\omega_{\alpha,j}^{n}}{\widehat{\omega}^{n}}-\frac{C}{\bar{t}}.\\
 & \geq & CTr_{\omega_{\alpha,j}}\widehat{\omega}-\frac{C}{\bar{t}}\\
 & \geq & C\left(\frac{\widehat{\omega}^{n}}{\omega_{\alpha,j}^{n}}\right)^{\frac{1}{n}}-\frac{C}{\bar{t}}.
\end{eqnarray*}
where we have used Assumption 2 in the third line, $c_{\widetilde{\beta}}\leq\frac{1}{2}$ and \eqref{epsilon} in the fourth line, (3) in \S 2.1 in the fifth line, and the fact $\frac{1}{\lambda}+C\log\lambda$ is bounded below by
some constant depending on $C$ in the sixth line. Therefore, at $(\bar{x},\bar{t})$,
$\omega_{\alpha,j}^{n}\geq C\bar{t}^{n}\widehat{\omega}^{n}$ and so $\dot{\varphi}_{\alpha,j}(\bar{x},\bar{t})\geq C+n\log\bar{t}$ by \eqref{pma} and (3) in \S 2.1.
Since $\log\log^{2}\|S\|^{2}>1$ by Assumption 2,
we have $Q(\bar{x},\bar{t})\geq C+A(\varphi_{\alpha,j}(\bar{x},\bar{t})-\widetilde{\beta}\log|S(\bar{x})|^{2})$.
By Theorem \ref{mainC0estimates}, $\varphi_{\alpha,j}(t)-\widetilde{\beta}\log\|S\|^{2}\geq C$ and
so $Q(\bar{x},\bar{t})\geq C$.  From this, and the upper bound of $\varphi_{\alpha,j}(t)$ from  Theorem \ref{mainC0estimates}, we conclude the lower bound for $\dot{\varphi}_{\alpha,j}(t)$ in the Theorem.
\end{proof}

\subsubsection{Local trace estimates for $\omega_{\alpha, j}(t)$}

Note for all $\alpha$ and $j$, since $\omega_{\alpha,j}(t)$ is
a bounded curvature solution on $M\times[0, T_{[\omega_0 ]})$, so $\omega_{\alpha,j}(t)$ will be uniformly
equivalent to $\widehat{\omega}$ on any closed subinterval of $[0,T_{[\omega_0]})$.
In particular, $Tr_{\widehat{\omega}}\omega_{\alpha,j}(t)$ will be a
bounded function on $M\times[0, T]$
\begin{thm}
There is a smooth function $G(\|S\|^{2}(x),t)$ on $M\times(0,(1-c_{\widetilde{\beta}})\widetilde{T}]$
such that $Tr_{\widehat{\omega}}\omega_{\alpha,j}(x,t)\leq G(\|S\|^{2}(x),t)$
for all $2\alpha\leq\widetilde{\beta}$ and all $j$.\end{thm}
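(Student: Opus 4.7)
The plan is to adapt the Chern--Lu maximum principle argument from Lemma \ref{C2-smmoth-appr}, now with an $\|S\|^{2}$-dependent barrier that compensates for the unboundedness of $\varphi_{\alpha,j}(t)$ near $D$. First I would apply the parabolic Chern--Lu inequality for the K\"ahler--Ricci flow,
\[
(\partial_{t}-\Delta_{\omega_{\alpha,j}})\log Tr_{\omega_{\alpha,j}}\widehat{\omega} \leq K\, Tr_{\omega_{\alpha,j}}\widehat{\omega},
\]
using that $\widehat{\omega}$ has bounded bisectional curvature. Following the pattern of Theorem \ref{thm:lowerbddphidot}, I would then work with the barrier
\[
H = t\log Tr_{\omega_{\alpha,j}}\widehat{\omega} - A\bigl(\varphi_{\alpha,j}(t) - \widetilde{\beta}\log\|S\|^{2} + \epsilon\log\log^{2}\|S\|^{2}\bigr),
\]
with $A$ large and $\epsilon>0$ small to be chosen. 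Theorem \ref{mainC0estimates} gives a uniform lower bound on $\varphi_{\alpha,j}(t) - \widetilde{\beta}\log\|S\|^{2}$, and Assumption \ref{ass2}(3) together with the cusplike behavior of each fixed bounded curvature solution $\omega_{\alpha,j}(t)$ forces $H\to -\infty$ as $\|S\|\to 0$. Hence $H$ attains a maximum (either at an interior point, or by applying the Omori--Yau argument of Lemma \ref{C0-smooth-appr} via a space-time sequence).

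At a maximum $(\bar x,\bar t)$ with $\bar t>0$, I would compute $(\partial_{t}-\Delta)H$ using \eqref{pma}, the identities $\Delta\log\|S\|^{2}=-Tr_{\omega}\Theta$ and $i\ddbar\log\log^{2}\|S\|^{2}=\eta-\widehat{\omega}$, and Chern--Lu, to reduce to
\[
0\leq \log Tr_{\omega}\widehat{\omega}+(Kt-A\epsilon)Tr_{\omega}\widehat{\omega} -A\dot{\varphi}_{\alpha,j}+An -A\,Tr_{\omega}(\theta_{t}-\widetilde{\beta}\Theta-\epsilon\eta).
\]
For $\epsilon$ small (so Assumption \ref{ass2}(2) gives $\theta_{t}-\widetilde{\beta}\Theta-\epsilon\eta\geq 0$) and $A\epsilon> K\widetilde{T}+1$, this rearranges to $Tr_{\omega_{\alpha,j}}\widehat{\omega}(\bar x,\bar t) \leq C\cdot A(n-\dot{\varphi}_{\alpha,j}(\bar x,\bar t))$. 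Inserting the lower bound on $\dot{\varphi}_{\alpha,j}$ from Theorem \ref{thm:lowerbddphidot}, which grows at most linearly in $|\log\|S\|^{2}|$, gives $\log Tr_{\omega}\widehat{\omega}(\bar x,\bar t)\leq C+\log|\log\|S\|^{2}(\bar x)|$ modulo $\bar t$-dependent terms.

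The central absorption step, and the main obstacle, is to show that $\max H$ is bounded uniformly in $\alpha,j$ and in the location of $\bar x$. Since $\log\log^{2}\|S\|^{2}=2\log|\log\|S\|^{2}|$, choosing $A\epsilon>\widetilde{T}/2$ makes the contribution $-A\epsilon\log\log^{2}\|S\|^{2}(\bar x)$ inside $H$ dominate the $\bar{t}\log|\log\|S\|^{2}(\bar x)|$ growth above; combined with the cancellation $-A\varphi_{\alpha,j}(\bar x,\bar t) + A\widetilde{\beta}\log\|S\|^{2}(\bar x)\leq -AL_{\widetilde{\beta}}(\bar t)$ from Theorem \ref{mainC0estimates}, this would yield $\max H\leq C(\widetilde{T})$. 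Rearranging $H(x,t)\leq C(\widetilde T)$ pointwise and using the upper bound $\varphi_{\alpha,j}(t)\leq U(t)$ then produces
\[
Tr_{\omega_{\alpha,j}}\widehat{\omega}(x,t)\leq \exp\!\Bigl(\tfrac{C + AU(t) - A\widetilde{\beta}\log\|S\|^{2}(x) + A\epsilon\log\log^{2}\|S\|^{2}(x)}{t}\Bigr) =: g_{1}(\|S\|^{2}(x),t),
\]
which is equivalent to a pointwise lower bound on the $\widehat{\omega}$-eigenvalues of $\omega_{\alpha,j}$.

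Finally, the volume form identity $\omega_{\alpha,j}^{n}/\widehat{\omega}^{n}=e^{\dot{\varphi}_{\alpha,j}}\Omega/(\widehat{\omega}^{n}\|S\|^{2}\log^{2}\|S\|^{2})$ from \eqref{pma}, combined with (3) of \S 2.1 and the upper bound on $\dot{\varphi}_{\alpha,j}$ from the previous subsection, bounds the eigenvalue product by some $g_{2}(\|S\|^{2}(x),t)$. Combining the eigenvalue lower bound via $g_1$ with this product bound through the standard inequality $Tr_{\widehat{\omega}}\omega\leq n(Tr_{\omega}\widehat{\omega})^{n-1}(\omega^{n}/\widehat{\omega}^{n})$ produces the required smooth function $G(\|S\|^{2}(x),t):=n\,g_{2}\,g_{1}^{n-1}$. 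The delicate matching of constants ($A\epsilon > K\widetilde{T}+1$, $A\epsilon > \widetilde T/2$, and $\epsilon$ small enough for Assumption \ref{ass2}(2)) is the principal technical point, since it is this balance that lets the log-log growth of $Tr_{\omega}\widehat{\omega}$ at the maximum be absorbed independently of where $\bar x$ lies.
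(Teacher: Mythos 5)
Your proposal is essentially correct, but it takes a genuinely different route from the paper, and the difference is worth recording. The paper's proof uses the barrier $Q=t\log Tr_{\widehat{\omega}}\omega_{\alpha,j}-B(\varphi_{\alpha,j}-\widetilde{\beta}\log\|S\|^{2}+\epsilon\log\log^{2}\|S\|^{2})$, i.e.\ the trace of the \emph{solution} against the reference, with the corresponding second-order inequality $(\partial_t-\Delta)\log Tr_{\widehat{\omega}}\omega_{\alpha,j}\leq C_1 Tr_{\omega_{\alpha,j}}\widehat{\omega}$. At the maximum of $Q$ the paper substitutes $\log Tr_{\widehat\omega}\omega\leq(n-1)\log Tr_\omega\widehat\omega+\log(\omega^n/\widehat\omega^n)$ and $\dot\varphi\geq\log(\omega^n/\widehat\omega^n)-C$, which for $BC_3>2$ collapses the inequality to $\sum_i(\tfrac{1}{2\lambda_i}+\log\lambda_i)\leq C$ where $\lambda_i$ are the eigenvalues of $\omega$ relative to $\widehat\omega$. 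Since each summand is bounded below, each $\lambda_i$ is bounded above, so $Tr_{\widehat\omega}\omega(\bar x,\bar t)\leq C$ \emph{uniformly}, and $\max Q\leq C$ follows immediately; no absorption of growth in $\bar x$ is needed. By contrast you work with $Tr_{\omega}\widehat\omega$ (following Lemma \ref{C2-smmoth-appr}), which forces you to feed in the $\|S\|$-dependent lower bound on $\dot\varphi$ from Theorem \ref{thm:lowerbddphidot}, track the resulting $\log|\log\|S\|^{2}|$-growth of $\log Tr_\omega\widehat\omega$ at $\bar x$, and show it is absorbed by the $-A\epsilon\log\log^2\|S\|^2$ term under the extra constraint $2A\epsilon>\widetilde T$. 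Both converge: your argument is closer in spirit to the bounded-data case and is slightly more elementary at the maximum (no eigenvalue separation argument), while the paper's is more self-contained (it never invokes Theorem \ref{thm:lowerbddphidot}, whose conclusion only holds on $(0,(1-c_{\widetilde\beta})(\widetilde T-\tfrac{1}{A})]$, so your proof technically covers a slightly smaller time interval, though this is repaired by taking a marginally larger $\widetilde T$) and avoids the delicate balance $A\epsilon>\max(K\widetilde T+1,\widetilde T/2)$. Both routes finish by converting $Tr_\omega\widehat\omega$ to $Tr_{\widehat\omega}\omega$ via the same inequality $Tr_{\widehat\omega}\omega\lesssim(Tr_\omega\widehat\omega)^{n-1}\,\omega^n/\widehat\omega^n$ together with the volume-form identity from \eqref{pma}.
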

\begin{proof}
 Consider $Q(\cdot,t)=t\log Tr_{\widehat{\omega}}\omega_{\alpha,j}(t)-B(\varphi_{\alpha,j}(t)-\widetilde{\beta}\log\|S\|^{2}+\epsilon\log\log^{2}\|S\|^{2})$,
where $\epsilon$ is chosen as in \eqref{epsilon}  and $B>0$ is a large 
constant which will be determined later, independently
of $\alpha,j$. Now $Q(x,0)\to-\infty$
as $x$ approaches $D$ from (4.9), and from \eqref{pma} and that $\omega_{\alpha,j}(t)$ is a bounded
curvature solution, $Q(x,t)\to-\infty$ uniformly as $x$ approaches
$D$ for all $t\in[0,(1-c_{\widetilde{\beta}})\widetilde{T}]$.  Hence $Q(x,t)$ attains a maximum on  $M\times[0,(1-c_{\widetilde{\beta}})\widetilde{T}]$ at some
point $(\bar{x},\bar{t})$.   In the following, $C_i$'s will denote  positive constants independent
of $\alpha,j$. 

If $\bar{t}>0$, then $0\leq(\partial_{t}-\Delta)Q(\bar{x},\bar{t})$, where $\Delta$ is the Laplacian with respect to $\omega_{\alpha,j}(t)$.
Also, we have \begin{equation}(\partial_{t}-\Delta)\log Tr_{\widehat{\omega}}\omega_{\alpha,j}(t)\leq C_1Tr_{\omega_{\alpha,j}}\widehat{\omega}\end{equation}
for some constant $C_1$ depending only on $\widehat{\omega}$ (see \cite{ST0}), so
\begin{eqnarray*}
(\partial_{t}-\Delta)t\log Tr_{\widehat{\omega}}\omega_{\alpha,j}(t) & = & \log Tr_{\widehat{\omega}}\omega_{\alpha,j}(t)+t(\partial_{t}-\Delta)\log Tr_{\widehat{\omega}}\omega_{\alpha,j}(t)\\
 & \leq & \log Tr_{\widehat{\omega}}\omega_{\alpha,j}(t)+C_1 tTr_{\omega_{\alpha,j}(t)}\widehat{\omega}.
\end{eqnarray*}
Using the computations in the proof of Theorem \ref{thm:lowerbddphidot}, we have
\[
 (\partial_{t}-\Delta)(\varphi_{\alpha,j}(t)-\widetilde{\beta}\log\|S\|^{2}+\epsilon\log\log^{2}\|S\|^{2})
\geq\dot{\varphi}_{\alpha,j}-n+(1-c_{\widetilde{\beta}})C_2Tr_{\omega_{\alpha,j}}\widehat{\omega}.
\]

%In case out of margin
%\begin{eqnarray*}
% & (\partial_{t}-\Delta)(\varphi_{\alpha,j}(t)-\widetilde{\beta}\log\|S\|^{2}+\epsilon\log\log^{2}\|S\|^{2}) &\\
%&\geq\dot{\varphi}_{\alpha,j}-n+(1-c_{\widetilde{\beta}})CTr_{\omega_{\alpha,j}}\widehat{\omega}.
%\end{eqnarray*}

Therefore at $(\bar{x},\bar{t})$, using that $c_{\beta}<1/2$ from Assumption 2, we have
\[
\begin{split}
0\leq (\partial_{t}-\Delta)Q\leq&\log Tr_{\widehat{\omega}}\omega_{\alpha,j}-B\dot{\varphi}_{\alpha,j}+nB+(C_1t-\frac{1}{2}BC_2)Tr_{\omega_{\alpha,j}}\widehat{\omega}\\
\leq&\log Tr_{\widehat{\omega}}\omega_{\alpha,j}-Tr_{\omega_{\alpha,j}}\widehat{\omega}-B\dot{\varphi}_{\alpha,j}+nB
\end{split} \]
 where in the second line we have assumed a choice $B$, independent
of $\alpha, j$ and $\bar{t}$, such that $C_1\widetilde{T}-\frac{1}{2}BC_2<-1$.

Since $Tr_{\widehat{\omega}}\omega_{\alpha,j}\leq(Tr_{\omega_{\alpha,j}}\widehat{\omega})^{n-1}\frac{\omega_{\alpha,j}^{n}}{\widehat{\omega}^{n}}$
and $\dot{\varphi}_{a,j}\geq C_{3}\log\frac{\omega_{\alpha,j}^{n}}{\widehat{\omega}^{n}}$
for some $C_{3}$ depending only on $h$ and $\widehat{\omega}$, putting
them into the above expression, we get
\[
0\leq(n-1)\log Tr_{\omega_{\alpha,j}}\widehat{\omega}+(1-BC_{3})\log\frac{\omega_{\alpha,j}^{n}}{\widehat{\omega}^{n}}-Tr_{\omega_{\alpha,j}}\widehat{\omega}+C_4.
\]
Assume further that $BC_{3}>2$, we have
\begin{eqnarray*}
0 & \leq & -Tr_{\omega_{\alpha,j}}\widehat{\omega}+(n-1)\log Tr_{\omega_{\alpha,j}}\widehat{\omega}-\log\frac{\omega_{\alpha,j}^{n}}{\widehat{\omega}^{n}}+C_4\\
 & \leq & -\frac{1}{2}Tr_{\omega_{\alpha,j}}\widehat{\omega}-\log\frac{\omega_{\alpha,j}^{n}}{\widehat{\omega}^{n}}+C_4,
\end{eqnarray*}
we used $-x+C\log x$ is bounded above for $x>0$ by some constant depending
on $C$.  Now let $\lambda_i$ be the eigenvalue of $\omega_{\alpha,j}(\bar{x}, \bar{t})$ relative to $\widehat{\omega}(\bar{x}, \bar{t})$ and let $C$ denote a postive constant independent of $\alpha, j$ which may differ from line to line.  Then the previous equation says
$$\sum_i (\frac{1}{2\lambda_i}+\log\lambda_i)\leq C$$
and from the fact that the function $1/2x+\log x$ is bounded below for all $x>0$, we get that $ (\frac{1}{2\lambda_i}+\log\lambda_i)\leq C$ for each $i$, thus $Tr_{\widehat{\omega}}\omega_{\alpha,j}(\bar{x}, \bar{t})\leq C$. Since $\varphi_{\alpha,j}(t)-\widetilde{\beta}\log\|S\|^{2}\geq C$
and $\epsilon\log\log^{2}\|S\|^{2}\geq0$ we conclude $Q(\bar{x},\bar{t})\leq C$.  Thus by our earlier observed upper bound for $Q(x, 0)$ we get $Q(x, t)\leq C$ on  $M\times[0,(1-c_{\widetilde{\beta}})\widetilde{T}]$ and the Theorem follows from this and the upper bound in Theorem \ref{mainC0estimates}.
\end{proof}

\subsection{Completion of Proof of Theorem \ref{T1}}

 Now recall our family $\omega_{\alpha,j}(t)$ of solutions to \eqref{ckrf} on $M\times[0, T_{[\omega_0 ]})$ from Lemma \ref{LZ}.
Recall that we wrote $\omega_{\alpha,j}(t)=\theta_t+ i\ddbar\varphi_{\alpha,j}(t)$ where $\varphi_{\alpha,j}(t)$ solves \eqref{pma} on $M\times[0, T_{[\omega_0 ]})$.  Also recall the choices made in Assumption \ref{ass2}, and in particular that $0<T<\widetilde{T}<T_{[\omega_0 ]}$ was arbitrary.

 From the Theorem 4.3, 4.4 and \eqref{pma}, for any $s>0$ and compact subsets  $K_1\subset \subset K_2 \subset \subset M$ we may have
\begin{equation}
C_1\eta \leq \omega_{\alpha,j}(t) \leq C_2 \eta
\end{equation}
on $K_2\times[s, T]$ for some constants $C_i$ independent over all $\alpha$ sufficiently small and all $j$ sufficeintly large depending on $\tilde{T}$.  It follows from this and the estimates from the Evans-Krylov theory (see also \cite{WS} for a maximum principle proof of these for \eqref{ckrf}), that for some $\alpha_k \to 0$, $j_k\to \infty$, we have $\omega_{\alpha_k, j_k}(t)$ converges on $K_1 \times[s, T]$ smoothly to a limit solution $\omega(t)$ to the flow in equation \eqref{ckrf}.   As $T< T_{[\omega_0 ]}$ was chosen arbitrarily, by a diagonal argument we may in fact assume $\omega_{\alpha_k, j_k}(t)$ converges on $M\times(0,  T_{[\omega_0 ]})$, smoothly on compact subsets, to a limit solution $\omega(t)$ to the flow in \eqref{ckrf}, while also $\omega_{\alpha_k, j_k}(0) \to \omega_0$ smoothly on compact subsets of $M$.  By applying Theorem 4.1 in \cite{Chau-Li-Tam} to the sequence $\omega_{\alpha_k, j_k}(t)$ on $M\times[0, T_{[\omega_0 ]})$ and observing the uniform lower bound on $\omega_{\alpha_k, j_k}(0) \geq \delta \widehat{\omega}$ in \eqref{lowerbound}, we see that $\omega_{\alpha_k, j_k}(t)$ actually converges smoothly on $M\times[0,  T_{[\omega_0 ]})$ to a limit solution satisfying \eqref{completeness0}.  In other words the solution $\omega(t)$ extends smoothly on $M\times[0,  T_{[\omega_0 ]})$ and satisfies \eqref{completeness0}.

 We now show (1) in the Theorem \ref{T1}  is satisfied.  Fix any Hermitian metric $h$ on $\mathcal{O}_D$ and smooth volume form $\Omega$ on $\overline{M}$.  Then as in our derivation of  \eqref{pma} we see that  $\varphi(t):=\varphi_0+ \int_0^t \log  \displaystyle \log\frac{\|S\|_{h}^{2}\log\|S\|_{h}^{2}(\omega(t))^{n}}{\Omega}$ solves
\eqref{pma0} on $M\times[0, T_{[\omega_0 ]})$ and \eqref{krfansatz}.   In particular,  $\varphi(t)=\displaystyle \lim_{k\to \infty} u_{\alpha_k, j_k}(t)$ where $u_{\alpha_k, j_k}(t) =\varphi_{\alpha_k, j_k}+ \int_0^t \log  \displaystyle \log\frac{\|S\|_{h}^{2}\log\|S\|_{h}^{2}(\omega_{\alpha_k, j_k}(t))^{n}}{\Omega}$ and $u_{\alpha_k, j_k}(t)$ solves \eqref{pma0} on $M\times[0, T_{[\omega_0 ]})$ with initial data $\varphi_{\alpha_k, j_k}$.  To see that the upper bound in (1) holds, note that the estimate in Lemma \ref{C0estimateprelemma} (1) in fact holds for any, and hence our, choice of $h$ for some constant $C_1$.  Then from the proof of the upper bound in Theorem \ref{mainC0estimates}, there exists a continuous function $U(t)$ such that $u_{\alpha_k, j_k}(t)\leq U(t)$ and hence $\varphi(t)\leq U(t)$ on $M\times[0, T_{[\omega_0 ]})$.  This completes the proof of (1) in the Theorem.

Finally, we show that (2) holds.  Let $\varphi_0$  be as in (2). For any choice of $0<T<\widetilde{T}<T_{[\omega_0 ]}$ and corresponding subsequent choices in Assumption 2,  consider solutions $\varphi_{\alpha, j}(t)$ to \eqref{pma} on $M\times[0, T)$ constructed in the proof of Theorem \ref{T1} so far.  Now if  $\varphi_0$ also satisfies the lower bound in (2) then we may replace the estimates in  \eqref{C0esitmatee1} with
\begin{equation}\label{C0esitmatee22}
\alpha \log \|S\|^2 -C\log\log\|S\|^2 \leq \varphi_{\alpha,j}<C\end{equation}
for some $C$ and all $\alpha\leq \hat{\alpha}$ and all $j$ where $\hat{\alpha}$ is from Lemma 4.1.

Now for $\hat{\alpha}$ sufficiently small, observe that the estimate in Lemma \ref{C0estimateprelemma} (2) still holds after replacing $\widetilde{\beta}$ with any $\alpha\leq \hat{\alpha}$.   Now repeating the proof of the lower bound in Theorem \ref{mainC0estimates}, but using instead the function $Q_{\epsilon}(x,t)=\varphi_{\alpha,j}(x,t)-2\alpha\log\|S(x)\|^{2}-\int_{0}^{t}\log(C_{2}s)ds+\epsilon t$, we may have $\varphi_{\alpha, j}(t)\geq - \alpha\log\|S\|^2 - C\log\log\|S\|^2+ \int_0^t \log (C_2 s) ds$ on $M\times[0, T]$ for all $\alpha\leq \hat{\alpha}$ and all $j$.   The a priori estimates in \S 4 imply that  $\varphi_{\alpha_k, j_k}(t)$ converges smoothly on compact subsets of  $M\times[0, T]$ to some $\varphi(t)$ satisfying the bounds in (2).   This completes the proof of Theorem \ref{T1} (2).

\section{Proof of Theorem \ref{T3}}
We begin with the following Theorem from which Theorem \ref{T3} will follow.  In the following, for any complete \K manifold $(M, \omega)$ with bounded curvature, we use $T(\omega)$ to denote the maximal existence time of a complete bounded curvature solution to the \KRF \eqref{ckrf} starting from $\omega$.  Also, we say $\gamma(x)$ is a distance like function on $(M, \omega)$ if for some $p\in M$ and $C_1, C_2 >0$ we have $C_1^{-1} d(p,x) \leq \gamma(x)\leq C_1 d(p, \cdot)$ whenever $d(p, x)>C_2$, where $d(p, \cdot)$ is the distance function from $p$ on $(M, \omega)$.  We begin by proving the following

 \begin{thm} \label{extime}
Let $(M,\widehat{\omega})$ be a complete \K manifold with bounded curvature. Let $\gamma:M\to \R$ be a smooth distance-like function with $|\hat{\n} \gamma |_{\widehat{\omega}}<C$ and $|i\ddbar \gamma|_{\widehat{\omega}}<C$ for some constant $C$. Let $\varphi\in C^{\infty}(M)$ such that $|\varphi|/\gamma \to 0$ and $|\hat{\n} \varphi|_{\widehat{\omega}}/\gamma \to 0$ as $\gamma\to 0$. If $\omega=\widehat{\omega}+i\ddbar \varphi$ is a complete metric with bounded curvature and satisfies $|\omega-\widehat{\omega}|_{\widehat{\omega}}\to 0$ as $\gamma \to 0$, then $T(\omega)=T(\widehat{\omega})$.
 \end{thm}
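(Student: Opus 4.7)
The plan is to prove $T(\omega)\geq T(\widehat{\omega})$; the reverse inequality will follow by symmetry, since the hypotheses are symmetric on exchanging $\omega\leftrightarrow\widehat{\omega}$ and replacing $\varphi$ with $-\varphi$ (both metrics are equivalent at infinity, so the asymptotic conditions are independent of which is used as reference, and $\gamma$ is distance-like for either). Let $\hat g(t)$ denote the maximal complete bounded curvature solution to \eqref{ckrf} starting from $\widehat{\omega}$, defined on $M\times[0,T(\widehat{\omega}))$. Fix any $\widetilde T<T(\widehat{\omega})$. By uniqueness of complete bounded curvature solutions to \eqref{ckrf} from \cite{Chen-Zhu}, it suffices to produce such a solution on $M\times[0,\widetilde T]$ starting from $\omega$, as this extends the original flow past $T(\omega)$ whenever $T(\omega)<\widetilde T$.

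I would construct such a solution via the ansatz $\omega(t)=\hat g(t)+i\ddbar u(t)$. A direct computation (using that $\hat g(t)$ itself solves \eqref{ckrf}) shows this ansatz yields a KRF solution iff $u$ satisfies the parabolic Monge-Amp\`ere equation
\begin{equation*}
\partial_t u=\log\frac{(\hat g(t)+i\ddbar u)^n}{\hat g(t)^n},\qquad u(0)=\varphi,
\end{equation*}
on $M\times[0,\widetilde T]$, after normalizing the additive constant. I would construct $u$ by an exhaustion of $M$ by smooth precompact domains $\Omega_j\uparrow M$: solve a nondegenerate parabolic Dirichlet problem for this equation on each $\Omega_j\times[0,\widetilde T]$ with spatial boundary and initial data equal to $\varphi$, and pass to the limit $j\to\infty$ using uniform a priori estimates.

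The heart of the argument is to establish, uniformly in $j$, a priori estimates that propagate the asymptotic behavior of $\varphi$ to all positive times. Concretely, I would derive:
\begin{enumerate}
\item For every $\epsilon>0$, $|u_j(t)|\leq\epsilon\gamma+C_\epsilon(t)$ on $M\times[0,\widetilde T]$, by using $\pm(\epsilon\gamma+C_\epsilon t)$ as barriers and the Omori-Yau maximum principle on the complete background $\hat g(t)$; the bounds $|\hat\n\gamma|_{\widehat{\omega}},|i\ddbar\gamma|_{\widehat{\omega}}\leq C$ ensure the barriers are admissible and control their evolution under the parabolic operator.
\item $|i\ddbar u_j(t)|_{\hat g(t)}\to 0$ as $\gamma\to\infty$, uniformly for $t\in[0,\widetilde T]$. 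This should follow from a Chern-Lu/second-order maximum principle argument applied to $\tr_{\omega_j(t)}\hat g(t)$, or a suitable cutoff version thereof, using the bounded bisectional curvature of $\hat g(t)$, the initial asymptotic equivalence $|\omega-\widehat{\omega}|_{\widehat{\omega}}\to 0$, the $C^1$ decay of $\varphi$ in the hypothesis, and the $C^0$ estimate (1).
\item Local $C^\infty$ estimates on compact subsets of $M$ via Evans-Krylov and parabolic bootstrap applied to the MA equation.
\end{enumerate}
Extracting a $C^{\infty}_{\mathrm{loc}}$ subsequential limit then gives $u(t)$ on $M\times[0,\widetilde T]$; estimates (1) and (2) force $\omega(t):=\hat g(t)+i\ddbar u(t)$ to be uniformly equivalent to $\hat g(t)$ outside some compact set, hence complete with bounded curvature for each $t$. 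Since $\widetilde T<T(\widehat{\omega})$ was arbitrary, $T(\omega)\geq T(\widehat{\omega})$.

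The main obstacle I expect is estimate (2): propagating the asymptotic decay $|\omega(t)-\hat g(t)|_{\hat g(t)}\to 0$ at infinity under the flow. A standard Chern-Lu inequality gives only a global $L^\infty$ estimate on $\tr_{\omega}\hat g$; upgrading this to genuine decay as $\gamma\to\infty$ will require a refined argument, most likely comparing $\tr_{\omega_j}\hat g(t)-n$ against a barrier built from $\gamma$, combined with the initial decay $|\omega_0-\widehat{\omega}|_{\widehat{\omega}}\to 0$ and the $o(\gamma)$ growth of $u_j$ and its derivatives supplied by step (1).
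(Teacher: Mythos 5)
Your proposal takes a genuinely different route from the paper, but it has a real gap that you yourself flag: step (2), the propagation of decay $|\omega(t)-\hat g(t)|_{\hat g(t)}\to 0$ at infinity. The difficulty is more severe than you suggest. The standard Chern--Lu argument for $\log\tr_{\omega_j(t)}\hat g(t)-Au_j$ needs a uniform $C^0$ bound on $u_j$, but your estimate (1) only gives $|u_j|=o(\gamma)$, which is unbounded. You would have to redesign the auxiliary function with a $\gamma$-weight (say $\log\tr_{\omega_j}\hat g - Au_j + \epsilon\gamma$), and then the term $\Delta_{\omega_j}\gamma=\tr_{\omega_j}(i\ddbar\gamma)$ is bounded only in terms of $\tr_{\omega_j}\widehat{\omega}$ --- the very quantity being estimated --- so the argument does not obviously close. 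Moreover, even granting (1)--(3) and the $C^\infty_{\mathrm{loc}}$ limit, you assert the limit is ``complete with bounded curvature for each $t$'': uniform metric equivalence to $\hat g(t)$ gives completeness, but bounded curvature requires a \emph{global} higher-order estimate (e.g.\ a Shi-type interior estimate with a uniform scale from below), which your local Evans--Krylov step does not supply. Finally, the symmetry reduction at the start needs an extra line: the hypotheses involve $|\hat\n\varphi|_{\widehat{\omega}}/\gamma\to 0$ with gradient and distance measured in $\widehat{\omega}$; swapping roles requires first observing that the asymptotic equivalence $|\omega-\widehat{\omega}|_{\widehat{\omega}}\to 0$ transfers these conditions to $\omega$.

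The paper avoids all of this with a short cutoff argument that sidesteps the analysis entirely. It sets $\rho_R=\rho(\gamma/R)$ and $\omega_R=\widehat{\omega}+i\ddbar(\rho_R\varphi)$, then uses only the pointwise hypotheses $|\varphi|/\gamma\to 0$, $|\hat\n\varphi|_{\widehat{\omega}}/\gamma\to 0$, $|\omega-\widehat{\omega}|_{\widehat{\omega}}\to 0$, together with $|\hat\n\gamma|_{\widehat{\omega}},|i\ddbar\gamma|_{\widehat{\omega}}\leq C$, to show that the error terms $2\re(i\p\rho_R\wedge\bar\p\varphi)$ and $i\varphi\,\ddbar\rho_R$ tend to $0$ in $\widehat{\omega}$-norm uniformly as $R\to\infty$. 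Hence $C_R^{-1}\omega\leq\omega_R\leq C_R\omega$ with $C_R\to 1$, so Theorem 2.2 of \cite{Chau-Li-Tam2} gives $C_R^{-1}T(\omega_R)\leq T(\omega)\leq C_R T(\omega_R)$; and since $\rho_R\varphi$ has compact support, Theorem 4.1 of \cite{LZ} gives $T(\omega_R)=T(\widehat{\omega})$. Letting $R\to\infty$ finishes it. In short: the paper converts the asymptotic hypotheses directly into a statement about existence times via two black-box results, whereas your plan tries to rederive the required a priori estimates for the flow from scratch --- a worthwhile but substantially harder program whose key second-order estimate is, as written, incomplete.
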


 \begin{proof}
Let $\rho:\R \to \R$ be a smooth function such that $\rho=1$ on $[0,1]$ and $\rho=0$ on $[2,\infty)$. Define $\rho_R:M\to \R$ by $\rho_R=\rho(\gamma/R)$ and let $\omega_R=\widehat{\omega}+i\ddbar(\rho_R \varphi)$. We claim that if $R$ is sufficiently large then $\omega_R$ is a complete \K metric and there exists $C_R\to 1$ as $R\to \infty$ such that $\frac{1}{C_R} \omega \leq \omega_R\leq C_R \omega$.

We have $\omega_R=\rho_R \omega+(1-\rho_R)\widehat{\omega}+2\re(i\p \rho_R \wedge \bar{\p}\varphi)+i\varphi \ddbar\rho_R$. Since $|\omega-\widehat{\omega}|_{\widehat{\omega}}\to 0$ as $\gamma \to \infty$, we have $\frac{1}{C_R} \omega \leq \rho_R \omega+(1-\rho_R)\widehat{\omega}\leq C_R \omega$ for some $C_R \to 1$ as $R\to \infty$. Now it suffices to show that $|2\re(i\p \rho_R \wedge \bar{\p}\varphi)|_{\widehat{\omega}}\to 0$  and $|i\varphi \ddbar\rho_R|_{\widehat{\omega}}\to 0$ uniformly on $M$ as $R\to \infty$.

For any point in $M$, we have
\begin{align*}
	|2\re(i\p \rho_R \wedge \bar{\p}\varphi)|_{\widehat{\omega}}& \leq|\frac{\rho'(\frac{\gamma}{R})}{R}\p \gamma \wedge \bar{\p} \varphi|_{\widehat{\omega} }\\
	& \leq \frac{|\rho'(\frac{\gamma}{R})|}{R}|\hat{\nabla} \gamma|_{\widehat{\omega}} |\hat{\nabla} \varphi|_{\widehat{\omega}}\\
	& \leq \frac{ \displaystyle C(\max_{\R} |\rho'|)\chi_{\gamma^{-1}{[R,2R]}}}{R}|\n \varphi|_{\widehat{\omega}}\\
	& \leq 2C(\max_{\R}  |\rho'|)\chi_{\gamma^{-1}{[R,2R]}}\frac{|\n \varphi|_{\widehat{\omega}}}{\gamma}.\\
\end{align*}

Because $|\hat{\nabla} \varphi|_{\widehat{\omega}}/\gamma \to 0$ as $\gamma \to \infty$, the function on the right hand side converges uniformly to 0 as $R\to \infty $. Similar argument works for
$$|i\varphi \ddbar\rho_R|_{\widehat{\omega}}=|\varphi\rho'(\frac{\gamma}{R})\frac{i\p \bar{\p}\gamma}{R}+\varphi\rho''(\frac{\gamma}{R})\frac{i\p\gamma\wedge \bar{\p}\gamma}{R^2}|_{\widehat{\omega}}.$$

Therefore, we have a family of complete \K metrics $\omega_R$ such that  $\frac{1}{C_R} \omega \leq \omega_R\leq C_R \omega$ with $C_R\to 1$ as $R\to \infty$ and it is clear that $\omega_R$ has bounded curvature. Therefore, by Theorem 2.2 in \cite{Chau-Li-Tam2}, we have  $\frac{1}{C_R} T(\omega_R)\leq T(\omega) \leq C_R T(\omega_R)$.  On the other hand, since $\rho_R\varphi$ has compact support, by Theorem 4.1 in \cite{LZ}, we have $T(\omega_R)=T(\widehat{\omega})$ for all $R$. Therefore, passing the limit $R\to \infty$ we obtain $T(\omega)=T(\widehat{\omega})$.
 \end{proof}

\begin{proof}[Proof of Theorem \ref{T3}] The uniqueness of bounded curvature solutions follows from \cite{Chen-Zhu}.  Let $p\in M$ and let $d_{\widehat{\omega}}(p,\cdot)$ be the distance function to $p$ relative to $\widehat{\omega}$.  Let $\gamma(x):= \log\log^2|S(x)|^2$ on $M$.  Then from  \eqref{CGmetric}  we may write
$$\widehat{\omega}=\bar{\eta} - i\ddbar \gamma(x)=\overline{\eta} -2\frac{dd^c \log \|S\|^{2}_{h}}{\log \|S\|^{2}_{h}}+2 i\partial \gamma \wedge \bar{\partial}\gamma.$$  Noting that $\eta$ as well as the numerator of the second term above are smooth forms on $\overline{M}$,  we see that for all $x\in M$ sufficiently close to $D$, or equivalently when $\gamma(x)$ is sufficiently large, we have $ C^{-1} \gamma(x)\leq d_{\widehat{\omega}}(p,x)\leq C \gamma(x)$ and $\| d \gamma(x) \|_{\widehat{\omega}}<C$ for some constant $C$.   Moreover, for all $x\in M$ sufficiently close to $D$ we also see from above that $- i\ddbar \gamma(x) >0$, and from this and the first equality above we may conclude that  $\|i\ddbar \gamma(x)\|_{\widehat{\omega}}\leq C$ for some $C$ independent of $x$.  In other words, $\gamma$ satisfies the assumption in Theorem \ref{extime} relative to $\widehat{\omega}$, and Theorem follows immediately.
\end{proof}

\begin{rem}
In Theorem \ref{T3} we can remove the condition on $d\varphi$ if we assume $\omega_0$ has the same \begin{it} standard spatial asymptotics\end{it} as that of $\widehat{\omega}$ as defined in \cite{LZ}.  As an example, if $\omega=\widehat{\omega}+i\ddbar \log\log\log^2\|S\|^2$ defines a metric, then it has standard spatial asymptotics at $D$ but not superstandard spatial asymptotics (see example 8.12 in \cite{LZ}) while Theorem \ref{T3} still provides a bounded curvature solution on $M\times[0, T_{[\omega_0]})$.
\end{rem}

\end{document}